\documentclass[a4paper]{amsart}
\usepackage{amssymb,latexsym}

\usepackage[active]{srcltx}
\usepackage{amsmath}
\usepackage{amsfonts}

\usepackage{amsthm}
\usepackage{amscd}
\usepackage{mathrsfs}
\usepackage{mathtools}
\usepackage{fancyhdr}
\usepackage{graphicx}
\usepackage{calc}
\usepackage{url}
\usepackage{verbatim}
\usepackage[cmtip,arrow,matrix,curve,tips,frame]{xy}
\usepackage{hyperref}
\xyoption{matrix}

\usepackage{ifthen}
\usepackage{color}
\usepackage{amsxtra}
\usepackage{amstext}
\usepackage{amssymb}
\usepackage{stmaryrd}
\usepackage{mathrsfs}
\usepackage{pifont}
\usepackage[T1]{fontenc}
\usepackage{textcomp}
\usepackage{bbm}

\usepackage[normalem]{ulem}
\usepackage[new]{old-arrows}

\usepackage{tikz} 
\usetikzlibrary{graphs,matrix,arrows,calc,decorations.pathreplacing,decorations.pathmorphing,decorations.markings,fit}
\usepackage{tikz-cd}

\usepackage{csquotes}

\setcounter{secnumdepth}{3}
\setcounter{tocdepth}{1}





%

%

\numberwithin{equation}{subsection}

\theoremstyle{plain}
\newtheorem{thm}[subsection]{Theorem}
\newtheorem*{thm*}{Theorem}

\newtheorem*{exthm*}{Expected Theorem}

\newtheorem{lem}[subsection]{Lemma}

\newtheorem*{lem*}{Lemma}

\newtheorem{prop}[subsection]{Proposition}

\newtheorem*{prop*}{Proposition}

\newtheorem{cor}[subsection]{Corollary}

\newtheorem*{cor*}{Corollary}

\newtheorem*{claim*}{Claim}

\newtheorem*{conj*}{Conjecture}

\theoremstyle{definition}
\newtheorem{defn}[subsection]{Definition}
\newtheorem*{defn*}{Definition}

\newtheorem{notation}[subsection]{Notation and Conventions}

\newtheorem{constr}[subsection]{Construction}

\newtheorem{exa}[subsection]{Example}

\newtheorem*{exa*}{Example}

\theoremstyle{remark}
\newtheorem{rmk}[subsection]{Remark}

\newtheorem*{rmk*}{Remark}

\newtheorem{ass}[subsection]{Assumption}

\newtheorem*{ass*}{Assumption}

\numberwithin{figure}{subsection}
\numberwithin{table}{subsection}

%





\newcounter{listnum}


\newcounter{asslistcounter}

\newcounter{subenvcounter}

\DeclareMathOperator{\Adm}{Adm}

\DeclareMathOperator{\End}{End}

\DeclareMathOperator{\Frac}{Frac}
\DeclareMathOperator{\Frob}{Frob}
\DeclareMathOperator{\id}{id}

\DeclareMathOperator{\Int}{Int}
\DeclareMathOperator{\inv}{inv}

\DeclareMathOperator{\GL}{GL}
\DeclareMathOperator{\Gr}{Gr}

\DeclareMathOperator{\Isom}{Isom}

\DeclareMathOperator{\pr}{pr}

\newcommand{\Sh}{\mathsf{Sh}}
\DeclareMathOperator{\SL}{SL}

\DeclareMathOperator{\Spec}{Spec}
\DeclareMathOperator{\Spf}{Spf}

\DeclareMathOperator{\Res}{Res}

\DeclareMathOperator{\rank}{rk}


\def \AA {\mathbb{A}}

\newcommand{\CC}{\mathbb{C}}

\def \FF {\mathbb{F}}
\def \GG {\mathbb{G}}

\def \QQ {\mathbb{Q}}

\def \ZZ {\mathbb{Z}}

\def \Bcal {\mathcal{B}}

\def \Dcal {\mathcal{D}}
\def \Ecal {\mathcal{E}}
\def \Fcal {\mathcal{F}}
\def \Gcal {\mathcal{G}}

\def \Lcal {\mathcal{L}}

\def \Ocal {\mathcal{O}}
\def \Pcal {\mathcal{P}}

\def \Scal {\mathcal{S}}
\def \Tcal {\mathcal{T}}

\def \Vcal {\mathcal{V}}

\def \mfr {\mathfrak{m}}

\def \Sscr {\mathscr{S}}

\def \hbar {\bar{h}}

\def \bbf {\mathbf{b}}

\def \wbf {\mathbf{w}}

\def \ybf {\mathbf{y}}

\def \Ksf  {\mathsf{K}}


\newcommand{\pot}[1]{ [\hspace{-0,17em}[ {#1} ]\hspace{-0,17em}] }
\newcommand{\rpot}[1]{ (\hspace{-0,23em}( {#1} )\hspace{-0,23em}) }

\newcommand{\restr}[2]{{#1}\raise-.5ex\hbox{\ensuremath|}_{#2}}

\def \dom {{\rm dom}}
\def \univ {{\rm univ}}


\newcommand{\cl}[1]{\mkern 1.5mu\overline{\mkern-1.5mu#1\mkern-1.5mu}\mkern 1.5mu}
\newcommand{\scl}[1]{{#1}^s} 




\newcommand{\red}{\mathrm{red}}

\newcommand{\riso}{\xrightarrow{\sim}}

\DeclareMathOperator{\Sch}{Sch}

\DeclareMathOperator{\coker}{coker}

\DeclareMathOperator{\Sht}{Sht}

\usepackage{bbold}

\usepackage[british]{babel}
\usepackage{xcolor}
\hypersetup{%
  colorlinks=true,
}

\newcommand{\blambda}{\boldsymbol\lambda}

\usepackage{manfnt}


\DeclareMathOperator{\bsc}{bsc}


\DeclareMathOperator{\Bun}{Bun}

\DeclareMathOperator{\Hck}{Hck}
\DeclareMathOperator{\op}{op}

\DeclareMathOperator{\Pic}{Pic}
\DeclareMathOperator{\lcm}{lcm}
\DeclareMathOperator{\Ram}{Ram}

\usepackage{leftindex}

\newcommand{\ltau}{{}^{\tau}}
\newcommand{\nc}{\mathrm{nc}}
\newcommand{\incl}{\mathrm{incl}}
\newcommand{\str}{\mathrm{str}}

\newcommand{\Eucal}{\underline{\mathcal{E}}}
\newcommand{\Fucal}{\underline{\mathcal{F}}}
\newcommand{\Pucal}{\underline{\mathcal{P}}}

\author[Y.-G.~Choi]{Yong-Gyu Choi}
\address{Department of Mathematical Sciences, KAIST
291 Daehak-ro, Yuseong-gu, Daejeon, 34141, Republic of Korea}
\email{yonggyuchoi@kaist.ac.kr}

\author[W.~Kim]{Wansu Kim}
\address{Department of Mathematical Sciences, KAIST
291 Daehak-ro, Yuseong-gu, Daejeon, 34141, Republic of Korea}
\email{wansu.math@kaist.ac.kr}

\author[J.~Park]{Junyeong Park}
\address{Department of Mathematics Education, Chonnam National University, 77, Yongbong-ro, Buk-gu, Gwangju 61186, Republic of Korea}
\email{junyeongp@gmail.com}

\title{On properness of moduli stacks of $\mathcal{D}^{\times}$-shtukas over ramified legs}

%

\begin{document}

\begin{abstract}
Given a maximal order $\Dcal$ of a central division algebra $D$ over a global function field $F$, we prove an explicit sufficient condition for moduli stacks of $\Dcal^\times$-shtukas to be proper over a finite field in terms of the \emph{local invariants} of $D$ and \emph{bounds}. Our proof is a refinement of E.~Lau's result (Duke Math. J. \textbf{140} (2007)), which showed the properness of the \emph{leg morphism} (or \emph{characteristic morphism}) away from the ramification locus of $D$. 
We also establish non-emptiness of Newton and Kottwitz--Rapoport strata for moduli stacks of $\Bcal^\times$-shtukas, where $\Bcal$ is a maximal order of a central simple algebra over $F$.
\end{abstract}

\maketitle

\tableofcontents

\section{Introduction}\label{sec-intro}
A Shimura variety $\Sh_K(G,\{h\})_\CC$ associated to a Shimura datum $(G,\{h\})$ is compact if and only if $G$ does not admit any proper parabolic subgroups defined over $\QQ$ (i.e., $G$ is totally anisotropic mod centre). Moreover, if a compact Shimura variety $\Sh_K(G,\{h\})_\CC$ admits a \emph{``canonical'' integral model} $\Sscr_{\Ksf}(G,\{h\})$ over a finite extension of $\ZZ_{(p)}$ (typically under the assumption that the level $\Ksf$ is parahoric at $p$), then we expect $\Sscr_{\Ksf}(G,\{h\})$ to be proper as well. Note that it is extremely delicate to even \emph{define} the ``canonicity'' of integral models (\emph{cf.} \cite{Milne:ModpPointsGoodRed}, \cite{Pappas:Canonical}, \cite{PappasRapoport:ShtukaCanonical}), not to mention the difficulty involved in actually constructing such a model. Nonetheless, in many cases where there is a construction of integral models that are expected to be canonical, the theory of arithmetic compactification confirms that integral models of compact Shimura varieties are indeed proper; see \cite[Theorem~5]{MadapusiPera:Compactification} and the references therein.

The moduli stacks of ``shtukas'' are often introduced as function field analogues of Shimura varieties. To explain the analogy, we fix a global function field $F$ with a smooth projective model $X$. Let $G$ be a connected reductive group over $F$, and let $\Gcal$ be a \emph{parahoric Bruhat--Tits integral model} of $G$ over $X$. Giving additional data $I_\bullet$ and $\blambda$ specifying \emph{legs} and \emph{bounds}, one obtains a moduli stack $\Sht^{\leqslant\blambda}_{\Gcal,I_\bullet}$ of $\Gcal$-shtukas, which is a separated Deligne--Mumford stack locally of finite type over $X^I$. 

If $\Gcal$ is a split reductive group, then the construction of $\Sht^{\leqslant\blambda}_{\Gcal,I_\bullet}$ is due to Varshavsky \cite{Varshavsky:shtuka}, building upon earlier work of Drinfeld. When $\Gcal$ is not necessarily reductive over $X$, the construction is due to Arasteh~Rad and Hartl \cite{ArastehRad-Hartl:LocGlShtuka, ArastehRad-Hartl:Uniformizing}, who considered a much broader class of integral models $\Gcal$ than those treated here. This construction was further refined by Bieker \cite{Bieker:Compactification}. It should be noted that the foundation becomes substantially more elaborate if we allow \emph{any} points as legs (instead of avoiding the bad places where $\Gcal$ is not reductive). Loosely speaking, the moduli stacks of $\Gcal$-shtukas with legs at places where $\Gcal$ is non-reductive (or ramified) are analogous to bad reduction of Shimura varieties with parahoric level structure. 
 
Although the analogy between Shimura varieties and moduli stacks of $\Gcal$-shtukas has been extremely successful, the latter exhibit certain ``pathologies'' with no analogue on the Shimura variety side. For instance, $\Sht^{\leqslant\blambda}_{\Gcal,I_\bullet}$ is typically non quasi-compact. Even in the classical setting with $\Gcal = \GL_d$ considered by Drinfeld and Lafforgue \cite{LafforgueL:GlobalLanglands}, each irreducible component of  $\Sht^{\leqslant\blambda}_{\Gcal,I_\bullet}$ is non quasi-compact.

That said, the aforementioned pathology can be avoided by working with certain ``inner forms'' of $\GL_d$ -- an idea employed, for example, in \cite{LaumonRapoportStuhler}, \cite{LafforgueL:Ramanujan} and \cite{NgoBC:DSht}. To explain, let $D$ be a central division algebra over $F$ with dimension $d^2$, and choose a \emph{maximal} $\Ocal_X$-order $\Dcal$ of $D$ (for simplicity). The group $\Gcal\coloneqq \Dcal^\times$ is a parahoric Bruhat--Tits integral model of $G\coloneqq D^\times$, which is reductive away from $\Ram(D)$, the set of places where $D$ ramifies. Since the reductive $F$-group $D^\times$ is totally anisotropic mod centre, one might expect that the moduli stack of $\Dcal^\times$-shtukas should have some ``properness'' property via the \emph{naive} analogy with Shimura variety. Surprisingly, the following result was obtained by E.~Lau:
\begin{thm}[{\cite[Theorem~A]{Lau:Degeneration}}]\label{Lau-thm}
    Let $I$ be a non-empty finite set indexing legs, and choose a partition $I_\bullet$ of $I$. We fix $\blambda = (\lambda_i)_{i\in I}$ where $\lambda_i = (\lambda_{i,1},\cdots,\lambda_{i,d})\in \ZZ^d$ such that $\lambda_{i,j}\geq\lambda_{i,j+1}$ for any $i,j$, and we have $\sum_{i\in I}\sum_{j=1}^d\lambda_{i,j}=0$. Set $U\coloneqq X\setminus \Ram(D)$.

    Then $\left.\big(\Sht^{\leqslant\blambda}_{\Dcal^\times,I_\bullet}/a^\ZZ\big)\right|_{U^I}$ is proper over $U^I$ for any id\`ele $a\in\AA^\times_F$ of positive degree \emph{if and only if} the following inequality holds 
\begin{equation}\label{eq:Lau}
    \sum_{y\in \Ram(D)} [m\inv_y(D)]_\QQ > \sum_{i\in I}\sum_{j=1}^{d-m}\lambda_{i,j}
\end{equation}
for any integer $0<m<d$, where $[-]_\QQ\colon\QQ/\ZZ\to\QQ\cap [0,1)$ is a set-theoretic section.
\end{thm}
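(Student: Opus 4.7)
The plan is to verify the valuative criterion of properness for
\[
(\Sht^{\leqslant\blambda}_{\Dcal^\times,I_\bullet}/a^\ZZ)\big|_{U^I} \lto U^I.
\]
Fix a complete discrete valuation ring $R$ with fraction field $K$ and algebraically closed residue field $k$, together with a map $\Spec R \to U^I$ and a $\Dcal^\times$-shtuka $\underline{\Ecal}_K$ over $\Spec K$ realizing the generic leg data. Since $X_R$ is regular of dimension two and locally free $\Dcal$-modules extend across codimension-two subsets, the underlying rank-one $\Dcal$-bundle of $\underline{\Ecal}_K$ admits some extension $\Ecal$ on $X_R$; the bounded modifications and the Frobenius structure then extend uniquely over the generic point of the closed fiber $X_k$. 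After possibly twisting by a power of the central id\`ele $a$, the residual choice in upgrading $\Ecal$ to a shtuka over $\Spec R$ is encoded in Hecke modifications supported on $X_k$, equivalently in the Harder--Narasimhan (HN) polygon of the closed-fiber limit $\Ecal|_{X_k}$ viewed as an $\Ocal_{X_k}$-bundle of rank $d^2$.

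The crux of the argument is to show that \eqref{eq:Lau} rules out any destabilizing $\Ocal_{X_k}$-subbundle $\Fcal \subset \Ecal|_{X_k}$ of rank $md$ with $0<m<d$. The key local input is the following: at each $y \in \Ram(D)$, the maximal order $\Dcal_y$ is Morita equivalent to the maximal order in the local division algebra of invariant $\inv_y(D)=s_y/r_y$ (in lowest terms, with $r_y \mid d$), and a direct computation using the standard $(\Pi_y,L_y)$-presentation shows that any $\Ocal_{X,y}$-sublattice of $\Dcal_y$ of rank $md$ has $\Dcal_y$-saturation of $\Ocal_{X,y}$-colength bounded below by $d\cdot[m\inv_y(D)]_\QQ$, and this bound is sharp. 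On the other hand, the contribution of the bounds $\blambda$ to the degree of $\Fcal$ at the legs is at most $\sum_{i\in I}\sum_{j=1}^{d-m}\lambda_{i,j}$. Summing the local defects and invoking \eqref{eq:Lau}, no such $\Fcal$ can exist: the HN polygon of the special fiber must then coincide with that of the generic fiber, giving existence and uniqueness of the extension, hence properness. For the converse, when \eqref{eq:Lau} fails for some $m$, an $m$-dimensional $F$-subspace $V \subset D$ tailored to match the local invariants at $\Ram(D)$ produces a one-parameter family of $\Dcal^\times$-shtukas whose special fiber destabilizes along a rank-$md$ subbundle saturating the failed inequality, yielding a $K$-point of the moduli with no extension modulo $a^\ZZ$.

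The main obstacle is the sharp local lattice estimate at each $y\in\Ram(D)$: one must prove that the $\Dcal_y$-saturation of a rank-$md$ $\Ocal_{X,y}$-sublattice of $\Dcal_y$ has minimum $\Ocal_{X,y}$-colength equal to $d\cdot[m\inv_y(D)]_\QQ$, with equality attainable. This requires a careful analysis of how rank-$md$ lattices interact with the non-commutative valuation on $D_y$, essentially showing that they are stratified by their ``Pl\"ucker position'' relative to the filtration $\Pi_y^\bullet\Dcal_y$, with the minimal saturation defect controlled by the fractional part of $m\cdot s_y/r_y$. Reconciling these local estimates with the global contribution of $\blambda$ -- converting the bookkeeping of the leg data into a clean global degree inequality -- is the technical content that upgrades Lau's original counting into the stated biconditional.
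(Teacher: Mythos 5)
This theorem is quoted from Lau and not reproved in the paper, but the machinery the paper develops for its refinement (Construction~\ref{constr:adm-lattice}, Lemmas~\ref{lem:adm-lattice}, \ref{lem:adm-lattice-simple}, \ref{lemma-Lau-bound-for-degree-of-D-varphi-space-outside-split-places}, Proposition~\ref{prop:quasi-compactness-of-Sht-D-mod-a}) is exactly Lau's route, and your proposal diverges from it at the two points that carry all the content. First, the obstruction to extending a $\Dcal^\times$-shtuka over a complete discrete valuation ring is \emph{not} a Harder--Narasimhan destabilisation of the special fibre of some chosen extension: one passes to the local ring at the generic point of the special fibre, takes Drinfeld's maximal admissible $\varphi$-stable lattice $M_{B'}$, and the shtuka extends if and only if the $(D,\varphi)$-space $N=\bigcap_n\varphi^n(M_{B'}/\mathfrak m_B M_{B'})$ has $\dim_{\breve F}N=d^2$; HN slopes enter only in the quasi-compactness step (irreducibility of $\Dcal^\times$-shtukas plus HN truncation), which your proposal never addresses even though the valuative criterion alone does not yield properness. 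Your claim that ``the HN polygon of the special fibre must coincide with that of the generic fibre, giving existence of the extension'' replaces the actual mechanism (topological nilpotence of $\varphi$ on part of the limit lattice) with an assertion. Second, your ``key local input'' --- a sharp colength bound for the $\Dcal_y$-saturation of a rank-$md$ $\Ocal_{X,y}$-sublattice of $\Dcal_y$ --- is both ill-posed (for $D_y$ division, every nonzero right $\Dcal_y$-submodule of $D_y$ has $\Ocal_y$-rank $d^2$, so the stated saturation/colength comparison does not parse) and, more fundamentally, Frobenius-free, so it cannot produce the congruence $\deg(N'_x,\varphi_{N',x})/d\equiv[m\inv_x(D)]_\QQ\bmod\ZZ$ that drives the argument; that congruence comes from the classification of simple $(D,\varphi)$-spaces and Dieudonn\'e $D_x$-modules and the local invariants of their endomorphism algebras (Lemma~\ref{lemma-Lau-bound-for-degree-of-D-varphi-space-outside-split-places}), combined with non-negativity of slopes at places away from the legs and the coker bound coming from $\blambda$. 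You yourself flag this step as the unresolved ``main obstacle,'' so the forward implication is not established.

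The converse (``only if'') is also essentially missing: producing an actual degenerating family when \eqref{eq:Lau} fails is the harder half of Lau's Theorem~A, and an ``$m$-dimensional $F$-subspace $V\subset D$'' is not the right datum --- one needs a simple $(D,\varphi)$-space of $\breve F$-dimension $dm$ with prescribed $(L,\Pi)$ matching the local invariants at $\Ram(D)$, realised as the limit of a family of $\Dcal^\times$-shtukas, and your one-sentence description does not construct it. A smaller but real inaccuracy: extending the $\Dcal$-bundle from $X_K$ to $X_A$ is a codimension-\emph{one} problem along the special fibre, not a codimension-two purity statement; extensions are parametrised by $(D\otimes_F B')$-lattices at the generic point of the special fibre (Horrocks; \cite[Lemma~1.16]{Lau:Degeneration}), which is precisely where the maximal admissible lattice and hence the whole degeneration analysis enters.
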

This result asserts that the moduli stack  $\left.\big(\Sht^{\leqslant\blambda}_{\Dcal^\times,I_\bullet}/a^\ZZ\big)\right|_{U^I}$ may \emph{not} be proper over $U^I$, and the properness follows if $D$ is ``sufficiently ramified'' relative to $\blambda$.

In this paper, we focus on the \emph{sufficient condition} for properness of the entire stack $\Sht^{\leqslant\blambda}_{\Dcal^\times,I_\bullet}/a^\ZZ$ without restricting to $U^I$, and obtain the following result:
\begin{thm}[Theorem~\ref{thm:from-inequality-to-properness}]\label{main-thm} 
In the same setting as in Theorem~\ref{Lau-thm}, suppose that none of $\lambda_i$ is a constant sequence for simplicity. We also assume that for any subset $Y\subset \Ram(D)$ with $|Y| = |\Ram(D)|-|I|$ and for any integer $0<m<d$, the following inequality holds
        \begin{equation}\label{eq:main-thm}
            \sum_{y\in Y} [m\inv_y(D)]_\QQ > \sum_{i\in I}\sum_{j=1}^{d-m}\lambda_{i,j}.
        \end{equation}
Then for any id\`ele $a\in\AA^\times_F$ of positive degree, $\Sht^{\leqslant\blambda}_{\Dcal^\times,I_\bullet}/a^\ZZ$ is proper over $X^I$.
\end{thm}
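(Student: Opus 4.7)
The plan is to verify the valuative criterion of properness for the morphism $\Sht^{\leqslant\blambda}_{\Dcal^\times,I_\bullet}/a^\ZZ \to X^I$. The stack is already a separated Deligne--Mumford stack of finite type over $X^I$ by the cited work of Varshavsky, Arasteh~Rad--Hartl, and Bieker, and the quotient by $a^\ZZ$ kills the ``degree drift'' responsible for the usual non-compactness in families; it therefore suffices to show that every $\Dcal^\times$-shtuka bounded by $\blambda$ over $\Spec K$ extends to $\Spec R$ whenever $R$ is a complete strictly henselian DVR with fraction field $K$ equipped with a lift of the legs to $(x_i)_{i\in I}\in X^I(R)$.

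The strategy is to refine Lau's proof of the sufficient direction of Theorem~\ref{Lau-thm}. Let $x_{i,k}$ denote the specialization of $x_i$ in the closed fiber, and set $Y \coloneqq \Ram(D) \setminus \{x_{i,k} : i\in I\}$, so that $|Y|\geq |\Ram(D)|-|I|$. Lau's argument, working over $U^I$, reduces the non-existence of a non-extendable shtuka to the Newton-type inequality
\begin{equation*}
\sum_{y\in \Ram(D)}[m\inv_y(D)]_\QQ > \sum_{i\in I}\sum_{j=1}^{d-m}\lambda_{i,j} \qquad (0<m<d),
\end{equation*}
in which each ramified place $y\in\Ram(D)$, disjoint from the legs, contributes $[m\inv_y(D)]_\QQ$ on the left via the non-trivial Newton polygon of a local $\Dcal^\times$-shtuka without leg at $y$. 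When legs are allowed at ramified places, the contribution from a place $y = x_{i,k}\in \Ram(D)$ must instead be absorbed by the bound $\lambda_i$, which is already present on the right; consequently, the same contradiction argument runs with $Y$ in place of $\Ram(D)$, and hypothesis \eqref{eq:main-thm} applied to any subset of $Y$ of size $|\Ram(D)|-|I|$ provides the needed strict inequality.

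Concretely, the proof would proceed as follows: (1) reduce to the valuative criterion as above and localize the extension problem to each place via the local--global decomposition of Arasteh~Rad--Hartl, noting that away from $\Ram(D)\cup \{x_{i,k}\}_i$ the local shtukas are étale and extend uniquely; (2) at each $y\in Y$, invoke Lau's local Newton estimate verbatim to produce the contribution $[m\inv_y(D)]_\QQ$ on the left; and (3) at $y = x_{i,k}\in\Ram(D)$, control the local slope contribution by $\sum_{j=1}^{d-m}\lambda_{i,j}$ using the local model of the stack at the ramified leg, namely an affine Schubert variety in the twisted Beilinson--Drinfeld affine Grassmannian for the non-reductive group $\Dcal^\times_y$. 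The assumption that no $\lambda_i$ is constant is needed in step~(3) to guarantee that the leg is ``effective'' at $y$, so that the local modification genuinely absorbs the ramification defect rather than collapsing to a trivial case where an extra term $[m\inv_y(D)]_\QQ$ would still have to appear. The main obstacle will be step~(3): making precise how the bound $\lambda_i$ at a ramified leg compensates for the ``missing'' contribution $[m\inv_y(D)]_\QQ$ in Lau's sum. This requires an explicit analysis in the twisted affine Grassmannian for $\Dcal^\times_y$ and will presumably use the Kottwitz--Rapoport and Newton stratification results for $\Bcal^\times$-shtukas established in the second half of the paper.
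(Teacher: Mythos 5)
Your reduction to the valuative criterion starts from a false premise: $\Sht^{\leqslant\blambda}_{\Dcal^\times,I_\bullet}/a^\ZZ$ is only \emph{locally} of finite type over $X^I$, and quotienting by $a^\ZZ$ does not by itself remove the non-quasi-compactness. As Remark~\ref{rmk:non-qc} stresses, $\Bun_{\Dcal^\times}/a^\ZZ$ fails to be quasi-compact (because $D^\times$ acquires proper parabolics over $\breve F$), and quasi-compactness of the shtuka stack over all of $X^I$ is not known in general. In the actual argument, finite type is \emph{part of the conclusion}: the same simplicity statement that rules out degeneration (every $(D,\varphi)$-space attached to a point of $\Sht^{\leqslant\blambda}_{\Dcal^\times,I_\bullet}(\cl\FF_q)$ is simple of $\breve F$-dimension $d^2$) gives irreducibility of all such shtukas, and then the Harder--Narasimhan truncation argument of Proposition~\ref{prop:quasi-compactness-of-Sht-D-mod-a} yields quasi-compactness; only after that may one restrict the valuative criterion to complete DVRs with residue field $\cl\FF_q$. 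Asserting finiteness up front is therefore a genuine gap, not a citation you can make.

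The second gap is your step (3), which you yourself flag as the main obstacle and leave open; moreover the route you sketch for it (an explicit analysis in the twisted affine Grassmannian for $\Dcal^\times_y$ at the ramified leg, using the Kottwitz--Rapoport and Newton stratification results) is not how the difficulty is resolved -- the paper states those stratification results are not logically needed, and warns that $\Gr^{\leqslant\blambda}_{\Dcal^\times,I_\bullet}$ admits no good moduli description at ramified legs. What actually happens is this: after passing to Drinfeld's maximal admissible lattice over a finite extension of $K$ and reducing, one obtains a $(D,\varphi)$-space $N$ (Construction~\ref{constr:adm-lattice}), and for a simple subspace $N'$ of dimension $dm$ the relative positions at \emph{every} leg, ramified or not, are bounded simply by viewing the modifications inside $\GL_{d^2}$-shtukas (\eqref{eq:Rel-Position}, \eqref{eq:Rel-pos-bounds}); this gives the global upper bound $\tfrac{1}{d}\dim\coker(\beta')\leq\sum_{i}\sum_{j=1}^{d-m}\lambda_{i,j}$ of Lemma~\ref{lem:adm-lattice}. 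The contribution $[m\inv_x(D)]_\QQ$ on the other side comes from the congruence $\deg(N'_x,\varphi_{N',x})/d\equiv[m\inv_x(D)]_\QQ\bmod\ZZ$ for simple $(D,\varphi)$-spaces (Lemma~\ref{lemma-Lau-bound-for-degree-of-D-varphi-space-outside-split-places}) combined with an \emph{integral} naive lower bound, and this upgrade is only available at places whose legs are absent or have central bound (Lemma~\ref{lem:adm-lattice-simple}); at a ramified place carrying a non-central leg the term is simply discarded (bounded below by $0$), rather than being ``absorbed by $\lambda_i$'' through any local-model computation -- which is exactly why the hypothesis must range over all subsets $Y$ of size $|\Ram(D)|-|I|$. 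Your proposal is missing this mechanism (the mod-$\ZZ$ congruence plus integrality, and the trivial-bound discard at bad legs), and the machinery you plan to use to fill the hole is not the right tool for it.
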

Let us explain why Theorem~\ref{main-thm} (as well as E.~Lau's theorem) is non-trivial. Although $D$ is simple as a (right) $D$-module, $D\otimes_{\FF_q}\cl\FF_q$ admits many proper submodules where $\FF_q$ is the constant field of $F$. Equivalently, although $G\coloneqq D^\times$ has no proper $F$-rational parabolic subgroup, $G$ admits many proper parabolic subgroups defined over $F\cdot\cl\FF_q$. Such proper non-rational parabolic subgroups may a priori contribute to \emph{non-trivial degenerations}. Therefore, to show properness, we show that the inequality \eqref{eq:main-thm} contradicts the existence of such a degeneration. This idea is already present in E.~Lau's proof of the sufficient condition for properness over $U^I$ \cite[Proposition~3.2]{Lau:Degeneration}, and we refine its proof to allow the legs to meet the ramification locus  $\Ram(D)$. Consequently, our inequality \eqref{eq:main-thm} is more stringent than Lau's \eqref{eq:Lau}. For example, if $d=|I|=2$ and $\blambda = \big((1,0),(0,-1)\big)$, then $\Sht^{\leqslant\blambda}_{\Dcal^\times,I_\bullet}/a^\ZZ$ is proper over $X^2$ if $|\Ram(D)|>4$ while  $\left.\big(\Sht^{\leqslant\blambda}_{\Dcal^\times,I_\bullet}/a^\ZZ\big)\right|_{U^2}$ is proper over $U^2$ if and only if $|\Ram(D)|>2$.

Note that E.~Lau's result  \cite[Theorem~A]{Lau:Degeneration} provides an ``if and only if'' criterion for properness over $U^I$, whereas our result establishes only a sufficient condition for properness over $X^I$. We expect that a degenerating family of $\Dcal^\times$-shtukas (with ramified legs) should exist when our inequality \eqref{eq:main-thm} fails but E.~Lau's inequality \eqref{eq:Lau} still holds. 
This question will be addressed in forthcoming work.

That said, a sufficient condition for properness is often more useful in applications. Furthermore, our main result immediately implies a sufficient condition for properness of moduli stacks of $\Gcal$-shtukas where $\Gcal$ is a parahoric Bruhat--Tits integral model of a totally anisotropic unitary group in odd characteristic. We hope that these cases will have useful applications, for example via Mantovan's formula or in the context of arithmetic intersection theory of special cycles.


Along the way, we establish Mazur's inequality for ``local $D_x$-shtukas'' (Proposition~\ref{prop:Adm}), and thereby obtain the \emph{non-emptiness} of Newton and Kottwitz--Rapoport strata in $\Sht^{\leqslant\blambda}_{\Dcal^\times,I_\bullet}$ (\emph{cf.} Corollaries~\ref{cor:nonempty-KR}, \ref{cor:nonempty-Newton}), which is of independent interest. This in particular shows that the fibre of $\Sht^{\leqslant\blambda}_{\Dcal^\times,I_\bullet}$ at any closed point of $X^I$ is \emph{non-empty} and exhibits features analogous to the mod~$p$ reduction of Shimura varieties. Although these results are not logically required for the proof of Theorem~\ref{main-thm}, they are conceptually linked: the key step in the proof of Theorem~\ref{main-thm} can be viewed as understanding the degeneration of Mazur's inequality along the boundary. (See Remark~\ref{rmk:stupid-sanity} for further discussions).

After setting up the general notation in \S\ref{sec-notation}, we review the basic definitions of moduli stacks of $\Gcal$-shtukas following \cite{ArastehRad-Hartl:LocGlShtuka}, \cite{ArastehRad-Hartl:Uniformizing} and \cite{Bieker:IntModels}. We then digress on Kottwitz--Rapoport and Newton stratifications and establish non-emptiness results in \S\ref{sec-stratifications}. In \S\ref{sec-isosht}, we review the classification of $(D,\varphi)$-spaces and Dieudonn\'e $D_x$-modules, and deduce the key computation for the properness result (\emph{cf.} Lemma~\ref{lemma-Lau-bound-for-degree-of-D-varphi-space-outside-split-places}), and in \S\ref{sec-valcrit} we finally prove Theorem~\ref{main-thm}.

\subsection*{Acknowledgement}
We thank Tuan Ngo Dac and Arghya Sadhukhan for the helpful comments and suggestions.
The first and second named authors were supported by the National Research Foundation of Korea(NRF) grant funded by the Korea government(MSIT). (No.~RS-2023-00208018). The last named author was supported by Basic Science Research Program through the
National Research Foundation of Korea(NRF) funded by the Ministry of Education (RS-2024-00449679) and the National Research Foundation of Korea(NRF) grant funded by the Korea government (MSIT) (RS-2024-00415601).

\section{Notations and Preliminaries}\label{sec-notation}
\subsection{General conventions}

For any field $k$ we let $\cl k$ denote its algebraic closure and $\scl{k}$ its separable closure.

Throughout the paper, we fix a finite field $\FF_q$ of characteristic $p$, and its algebraic closure $\cl\FF_q$. All the rings, schemes, algebraic stacks and morphisms between them in this paper are assumed to be over $\Spec(\FF_q)$ unless stated otherwise. We let $-\otimes-$ denote the tensor product over $\FF_q$ if there is no risk of confusion. For any schemes (or algebraic stacks) $S$ and $T$ over $\FF_q$, we let $S\times T$ denote the fibre product over $\Spec(\FF_q)$. If $T = \Spec R$, then we may write $S\times R$ instead of $S\times\Spec R$.   

We fix a smooth projective geometrically connected curve $X$ over $\Spec(\FF_q)$, and let $\breve X\coloneqq X\times \cl\FF_q$. For any (open or closed) subscheme $T\subset X$, we let $\breve T\subset \breve X$ denote the preimage of $T$. 
Let $F$ be the function field of $X$, and $\breve F$ the function field of $\breve X$. Note that $\breve F \cong F\otimes\cl\FF_q$.

For any finite extension $E/F$, let $\widetilde X_E$ denote the normalisation of $X$ in $E$, which is a finite flat cover of $X$. 

For any place $x$ of $F$ (i.e., a closed point of $X$), we write $\Ocal_x\coloneqq\widehat\Ocal_{X,x}$ and $F_x\coloneqq\Frac(\Ocal_x)$. A chosen uniformiser of $\Ocal_x$ will be denoted by $\varpi_x$, and we write $\kappa_x\coloneqq\Ocal_x/(\varpi_x)$. Let $\breve\Ocal_x$ and $\breve F_x$ denote the completed maximal unramified extensions of $\Ocal_x$ and $F_x$, respectively. 

For any $\cl x\in X(\cl\FF_q)$, we write $\Ocal_{\cl x}\coloneqq\widehat\Ocal_{\breve X,\cl x}$ and $F_{\cl x}\coloneqq \Frac(\Ocal_{\cl x})$. If $\cl x$ lies over $x\in X$ then we have $\breve\Ocal_x\cong \Ocal_{\cl x}$ and $\breve F_x\cong F_{\cl x}$.


\subsection{Linear algebraic groups and loop groups}\label{ssec:groups}
Throughout the paper, $\Gcal$ denotes a \emph{parahoric Bruhat--Tits group scheme}; i.e., a smooth affine group scheme over $X$ with \emph{reductive} generic fibre denoted by $G$ over $F$, such that its base change over $\Spec(\Ocal_x)$ is a \emph{parahoric group scheme} in the sense of Bruhat--Tits (\cite{BruhatTits:RedGp1}, \cite{BruhatTits:RedGp2}) for any closed point $x\in|X|$. (In particular, each fibre of $\Gcal$ is connected.)

For any scheme $T$ over $X$, we write $\Gcal_T\coloneqq \Gcal\times_X T$. If $T=\Spec R$ then we write $\Gcal_R\coloneqq\Gcal_{\Spec R}$. We similarly define $G_T$ and $G_R$ for any $F$-scheme $T$ and $F$-algebra $R$, respectively.

For a closed point $x\in|X|$, we write $\Gcal_x\coloneqq \Gcal_{\Ocal_x}$ and $G_x\coloneqq G_{F_x}$. We define 
\begin{align}
  L^+_x\Gcal \coloneqq L^+\Gcal_{x}:\mathrm{CRing}_{\Ocal_x}& \to \mathrm{Grp} \\
  R & \mapsto \Gcal_{\Ocal_x}(R[\![ z ]\!] ), \quad\text{and}\notag\\
  L_xG\coloneqq L G_{F_x} : \mathrm{CRing}_{F_x}& \to \mathrm{Grp} \\
  R & \mapsto G_{x}(R(\!( z )\!) ),\notag
\end{align}
and call them the \emph{positive loop group} of $\Gcal$ at $x$ and the \emph{loop group} of $G$ at $x$, respectively. Note that $L^+_x\Gcal$ is a pro-algebraic group over $\kappa_x$ and $L_xG$ is an ind-group scheme over $\kappa_x$. 

We similarly define the \emph{generic positive loop group} $L^+_\eta G$ and \emph{generic loop group} $L_\eta G$ to be the positive loop group and the loop group associated to $G$ over $F$. (We use the subscript $\eta$ to emphasise the different nature of $L_\eta G$ and $L^+_\eta G$ from $L_xG$  and $L^+_x\Gcal$ for $x\in |X|$.) 

\subsection{$\Gcal$-bundles}\label{ssect:G-Bun}
All torsors will be \emph{right} torsors. 
By a \emph{$\Gcal$-bundle $\Pcal$} over $X\times S$, we always mean a (right) $\Gcal$-torsor. 

For any $\Gcal$-bundle $\Pcal$ over $X\times S$, we define its \emph{localisation} at $x$, denoted by $\Lcal^+_x\Pcal$, to be the  $L^+_x\Gcal$-torsor over $S$ coming from the ``completion of $\Pcal$ at $x$''. See  \cite[\S3.2]{neupert:thesis} for details. We also attach to $\Pcal$ an $L_xG$-torsor $\Lcal_x\Pcal$ over $S$ as follows: \[\Lcal_x\Pcal\coloneqq\Lcal^+_x\Pcal\times^{L^+_x\Gcal}L_xG.\]


\subsection{Central simple algebras}\label{ssec-CSA}
Let $B$ be a central simple algebra over $F$, and choose an order $\Bcal$ over $X$; i.e., a locally free $\Ocal_X$-algebra with generic fibre $B$. By abuse of notation, we let $\Bcal^\times$ also denote the smooth affine group scheme over $X$ representing the following contravariant functor on $\Sch_{/X}$:
\begin{equation}
    T\rightsquigarrow \Gamma(T, (\Bcal_T)^\times),
\end{equation}
where $\Bcal_T$ is the pull back of $\Bcal$. From now on, we suppose that $\Bcal$ is \emph{hereditary}; this is equivalent to requiring that $\Bcal^\times$ is parahoric everywhere (\emph{cf.} \cite[Theorems~(39.14), (40.5)]{Reiner:MaxOrders}), and any maximal order is hereditary (\emph{cf.} \cite[Theorem~(21.4)]{Reiner:MaxOrders}).

For any scheme $T$ over $X$, a \emph{$\Bcal_T$-module} is  a quasi-coherent sheaf on $T$ with a \emph{right} $\Bcal_T$-scalar multiplication (in the obvious sense). We say a $\Bcal_T$-module $\Ecal$ is \emph{locally free of rank~$r$ (as a $\Bcal_T$-module)} if there is a Zariski covering $\{U_\alpha\}$ of $T$ such that $\Ecal|_{U_\alpha}\cong \Bcal_{U_\alpha}^r$. Note that we have natural quasi-inverse equivalences of categories between rank-$1$ locally free $\Bcal_T$-modules $\Ecal$ and $\Bcal^\times$-torsors $\Pcal$ over $T$ as follows:
\begin{equation}\label{eq:torsor-module}
    \Ecal \rightsquigarrow  \Pcal\coloneqq\Isom_{\Bcal_T}(\Bcal_T,\Ecal) \quad \&\quad
    \Pcal\rightsquigarrow \Ecal\coloneqq \Pcal \times^{\Bcal^\times}\Bcal.
\end{equation}
        
For any $\FF_q$-scheme $S$, we set 
\begin{equation}
\tau\coloneqq \id_X\times\Frob_q\colon X\times S\to X\times S,
\end{equation}
and let $\ltau(-)$ denote the pullback by $\tau$ for a $\Gcal$-bundle or a $\Bcal_{X\times S}$-module.

\section{Moduli stacks of $\Gcal$-shtukas}\label{sec-shtuka}
\label{section-review-D-shtukas}
%
%
%


In this section, we review the basic definitions and properties of the moduli stacks of $\Gcal$-shtukas following \cite{ArastehRad-Hartl:LocGlShtuka, ArastehRad-Hartl:Uniformizing} and \cite{Bieker:Compactification}, under the running assumption that $\Gcal$ is \emph{parahoric} everywhere. 
We also obtain a quasi-compact result on moduli stacks of $\Dcal^\times$-shtukas; \emph{cf.} Proposition~\ref{prop:quasi-compactness-of-Sht-D-mod-a}.

We consider the moduli stack $\Bun_{\Gcal}$ of $\Gcal$-bundles on $X$, the stack over $\FF_q$ whose $S$-points parametrise $\Gcal$-bundles over $X\times S$. It is a smooth algebraic stack locally of finite type over $\FF_q$ (\emph{cf.} \cite[Theorem~2.5]{ArastehRad-Hartl:Uniformizing}). 

\begin{defn}
  \label{def:Hecke}
  Let $\Gcal$ be as above, and choose a non-empty finite set $I$ equipped with a partition $I_\bullet\coloneqq(I_1,\cdots, I_r)$. The \emph{(unbounded) Hecke stack}, denoted as $\Hck_{\Gcal,  I_{\bullet}}$, is defined to be the stack over $\FF_q$ whose $S$-points parametrise tuples
  \[
   \Pucal\coloneqq ( (x_i)_{i\in I}, (\Pcal_j)_{j=0,\cdots, r}, (\varphi_j)_{j=1,\cdots, r} )
  \]
  where
  \begin{itemize}
    \item $x_i \in X(S)$ for any $i\in I$,
    \item $\Pcal_j \in \Bun_{\Gcal}(S)$ for any $0\leq j\leq r$,
    \item $\varphi_j \colon \Pcal_{j-1} |_{ (X\times S) \smallsetminus \bigcup_{i\in I_j} \Gamma_{x_i} } \to  \Pcal_{j} |_{ (X\times S) \smallsetminus \bigcup_{i\in I_j} \Gamma_{x_i} }$ is an isomorphism of $\Gcal$-torsors on $(X\times S) \smallsetminus \bigcup_{i\in I_j} \Gamma_{x_i} $ for every integer $1\leq j \leq r$.
  \end{itemize} 
For each $j_0=0,\cdots,r$, we define a morphism $\pr_{j_0}$ as follows:
\[\pr_{j_0}\colon \Hck_{\Gcal, I_\bullet}\to \Bun_\Gcal; \quad \Pucal= ( (x_i), (\Pcal_j), (\varphi_j))\mapsto\Pcal_{j_0}.\]
We also define the \emph{leg morphism} as follows:
\[
\wp\colon \Hck_{\Gcal ,I_\bullet}\to X^I; \quad \Pucal= ( (x_i), (\Pcal_j), (\varphi_j))\mapsto (x_i)_{i\in I}.
\]
If $I$ is a singleton then we write $\Hck_{\Gcal,X}\coloneqq\Hck_{\Gcal, (I)}$.
\end{defn}

\begin{defn}\label{def:BD}
     For $\Gcal$, $I$, $I_\bullet$ as above, the \emph{Beilinson--Drinfeld affine grassmannian} (or the \emph{BD grassmannian}), denoted as $\Gr_{\Gcal,  I_\bullet}$, is the fibre of the trivial bundle with respect to $\pr_0\colon\Hck_{\Gcal, I_\bullet}\to\Bun_\Gcal$. More explicitly, $\Gr_{\Gcal,I_\bullet}(S)$ classifies $(\Pucal,\epsilon)$, where $\Pucal\in\Hck_{\Gcal,I_\bullet}(S)$ and $\epsilon\,\colon \Gcal_{X\times S}\riso\Pcal_0$ is a trivialisation of $\Gcal$-torsors.

    If $I$ is a singleton then we write  $\Gr_{\Gcal,X}\coloneqq\Gr_{\Gcal, (I)}$.
\end{defn}
As $\Gcal$ is assumed to be parahoric everywhere, the morphism 
\[(\wp, \pr_0)\,\colon 
\begin{tikzcd}
    \Hck_{\Gcal, I_{\bullet}} \arrow[r] & X^I\times\Bun_{\Gcal}
\end{tikzcd}\]
is ind-projective. So in particular $\Gr_{\Gcal,  I_\bullet}$ is representable by an ind-projective ind-scheme. (\emph{Cf.} \cite[Propositions~3.9,~3.12]{ArastehRad-Hartl:Uniformizing}, building upon \cite[Theorem~2.19]{Richarz:AffGr}.)

\begin{rmk}\label{rmk:BL}
    When $S = \Spec R$, we  give an alternative description of $\Gr_{\Gcal,I_\bullet}(R)$ as follows. To set up the notation, given $x\in X(R)$ we let $\widehat\Gamma_x$ denote the $\Spec$ of the formal completion of $\Ocal_{X\times S}$ at $\Gamma_x$. Similarly, given $(x_i)_{i\in I} \in X(R)^I$ we let $\widehat\Gamma_{(x_i)}$ denote the $\Spec$ of the formal completion of $\Ocal_{X\times S}$ at $\bigcup_{i\in I}\Gamma_{x_i}$. Note that $\Gamma_{x_i}$ can be viewed as a closed subscheme of $\widehat\Gamma_{(x_i)}$. 
    
    Then, by the Beauville--Laszlo descent lemma for $\Gcal$-torsors (\emph{cf.} \cite[Lemma~5.1]{ArastehRad-Hartl:LocGlShtuka}), the restriction of $\Gcal$-torsors on $X\times S$ to $\widehat\Gamma_{(x_i)}$ defines a natural isomorphism 
    \begin{equation}\label{eq:BD-BL}
    \begin{tikzcd}
        \Gr_{\Gcal,I_\bullet}(R) \arrow{r}{\cong} & \{( (x_i), (\widehat\Pcal_j), (\widehat\varphi_j), \widehat\epsilon\, )\},
    \end{tikzcd}
    \end{equation}
    where $\widehat\Pcal_j$ is a $\Gcal$-torsor over $\widehat\Gamma_{(x_i)}$,  and $\widehat\varphi_j\colon \widehat\Pcal_{j-1}|_{\widehat\Gamma_{(x_i)}\setminus\bigcup_{i\in I_j}\Gamma_{x_i}} \riso \widehat\Pcal_{j}|_{\widehat\Gamma_{(x_i)}\setminus\bigcup_{i\in I_j}\Gamma_{x_i}}$ and $\widehat\epsilon\,\colon \Gcal_{\widehat\Gamma_{(x_i)}}\riso\widehat\Pcal_0$ are isomorphisms of $\Gcal$-torsors. 
    
    Recall that the group-valued contravariant functor on affine schemes over $X^I$
    \begin{equation}
        \Lcal^+_{X^I}\Gcal\colon
        \begin{tikzcd}
            (\Spec R , (x_i\in X(R))_{i\in I}) \arrow[r, rightsquigarrow] & \Gcal(\widehat\Gamma_{(x_i)})
        \end{tikzcd}
    \end{equation}
    can be represented by a pro-algebraic group that is \emph{flat} over $X^I$, called the \emph{positive global loop group}; \emph{cf.} \cite[Definition~2.1.3]{Bieker:IntModels} and \cite[Lemma~2.11]{Richarz:AffGr}. We have a left action of $\Lcal^+_{X^I}\Gcal$ on $\Gr_{\Gcal,I_\bullet}$ defined as follows: $g\in \Lcal^+_{X^I}\Gcal(R)$ modifies the trivialisation $\widehat\epsilon$ by precomposition with the left multiplication by $g^{-1}$.

    The description of $\Gr_{\Gcal,I_\bullet}$ and $\Lcal^+_{X^I}\Gcal$ can be simplified over the dense open subscheme $W_I\subset X^I$ of pairwise disjoint points $(x_i)_{i\in I}$. Indeed, from \eqref{eq:BD-BL} we get 
    \begin{equation}\label{eq:splitting-UI}
    \Gr_{\Gcal,  I_\bullet}|_{W_I} \riso (\prod_{i\in I}\Gr_{\Gcal,X})|_{W_I}\quad\text{and}\quad \Lcal^+_{X^I}\Gcal|_{W_I} \cong (\prod_{i\in I}\Lcal^+_X\Gcal)|_{W_I},
    \end{equation}
    where the products are taken as (ind-)schemes over $\Spec\FF_q$.     
\end{rmk}

Next, we review the notion of \emph{Beilinson--Drinfeld Schubert variety} (or \emph{BD Schubert variety}), indexed by $I$-tuples of dominant coweights. Let us fix a maximal $\scl F$-torus $T$ in $G_{\scl F}$ along with a Borel subgroup containing $T$,  which together determine the dominant coweights $X_\ast(T)_+$ associated with this choice. Recall that $X_\ast(T)_+$ is in natural bijection with the set of geometric conjugacy classes of $G$-valued cocharacters via the following construction
\[\xymatrix@1{
\lambda\in X_\ast(T)_+ \ar@{~>}[r]& \text{ the conjugacy class }\{\lambda\} \text{ of } \GG_m\xrightarrow{\lambda}T\hookrightarrow G_{\scl F}.
}\]
For any $\lambda\in X_\ast(T)_+$, let $E_\lambda\subset \scl F$ denote the field of definition of the geometric conjugacy class $\{\lambda\}$, called the \emph{reflex field} of $\lambda$. Note that $E_\lambda/F$ is finite as $E_\lambda$ is contained in any separable extension of $F$ splitting $G$. 

Now, observe that the generic fibre of $\wp\colon\Gr_{\Gcal,X}\to X$ is nothing but
\begin{equation}
    \Gr_{G,\eta}\coloneqq L_\eta G/L^+_\eta G,
\end{equation}
which we call the \emph{generic affine grassmannian}. It is an ind-projective scheme over $F$, and its $L^+_\eta G$-orbits over $\scl F$ are exactly \emph{affine Schubert cells} $\{\Scal^\lambda\}_{\lambda\in X_\ast(T)_+}$; namely, by identifying $L_\eta G(\scl F)=G(\scl F \rpot t )$ and $L^+_\eta(\scl F) = G(\scl F\pot t)$, we can write
\begin{equation}\label{eq:t-lambda}
    \Scal^\lambda(\scl F) = G(\scl F\pot t) \cdot t^{-\lambda}\cdot  G(\scl F\pot t)/G(\scl F\pot t),
\end{equation}
where $t^{-\lambda}$ is the image of $t^{-1}\in \scl F\rpot t ^\times$ via $\lambda\colon \GG_m\to G_{\scl F}$.
It turns out that $\Scal^\lambda$ is defined over $E_\lambda$.
%

\begin{defn}[\emph{cf.} {\cite[Definition~2.4.1]{Bieker:IntModels}}] \label{def:BD-Schubert}
    We fix an $I$-tuple $\blambda\coloneqq(\lambda_i)_{i\in I}$ of dominant coweights of $G$. We write $\widetilde X_{\lambda_i}\coloneqq \widetilde X_{E_{\lambda_i}}$ for the normalisation of $X$ in the reflex field $E_{\lambda_i}$, and set $\widetilde X_{\blambda} \coloneqq \prod_{i\in I}\widetilde X_{\lambda_i}$.
    
    Then the \emph{Beilinson--Drinfeld Schubert variety} (or the \emph{BD Schubert variety}) for $(\Gcal,I_\bullet,\blambda)$, denoted as  $\Gr^{\leqslant\blambda}_{\Gcal,I_\bullet}$, is defined as the closure of the image of
        \begin{equation}\label{eq:BD-Schubert:cell}
            \begin{tikzcd}
            \prod_{i\in I} \Scal^{\lambda_i} \arrow[hook]{r} & \prod_{i\in I} (\Gr_{G,\eta})_{E_{\lambda_i}} \arrow[hook]{r}  & \Gr_{\Gcal, I_\bullet}\times_{X^I} \widetilde X_{\blambda},
        \end{tikzcd}
        \end{equation}
        where $\prod_{i\in I}$ denotes the $I$-fold fibre product over $\FF_q$.
 %
 \end{defn}
 Set $\eta_{\blambda} \coloneqq \prod_{i\in I} \Spec E_{\lambda_i}$, where the product is taken over $\Spec \FF_q$. Then $ \prod_{i\in I} \Scal^{\lambda_i}$ is an orbit under the action of $\Lcal^+_{X^I}\Gcal |_{\eta_{\blambda}}$ on $\Gr_{\Gcal,I_\bullet}\times_{X^I}\eta_{\blambda}\cong\prod_{i\in I} (\Gr_{G,\eta})_{E_{\lambda_i}}$; \emph{cf.} \eqref{eq:splitting-UI}.
 From flatness of $\Lcal^+_{X^I}\Gcal$ over $X^I$, it now follows that $\Gr^{\leqslant\blambda}_{\Gcal,I_\bullet}\subset \Gr_{\Gcal, I_\bullet}\times_{X^I} \widetilde X_{\blambda}$ is stable under the  action of $\Lcal^+_{X^I}\Gcal$; \emph{cf.} the proof of Lemma~2.2.5 in \cite{Bieker:IntModels}.

 Note that $\Gr^{\leqslant\blambda}_{\Gcal,I_\bullet}$ can be constructed as the schematic image inside a projective subscheme of $\Gr_{\Gcal, I_\bullet}\times_{X^I} \widetilde X_{\blambda}$. To see this, choose a faithful representation $\rho\colon \Gcal \hookrightarrow \SL(\Vcal)$ for a vector bundle $\Vcal$ on $X$ satisfying suitable conditions as in \cite[Proposition~2.2(b)]{ArastehRad-Hartl:Uniformizing}. Let $\underline\omega\coloneqq \rho\blambda$ denote the induced $I$-tuple of coweights of $\SL(\Vcal_\eta)$, which defines a closed substack $\Hck^{\underline\omega}_{\Gcal,I_\bullet}$ that is projective over $\Bun_\Gcal$ via $\pr_0$ by \cite[Proposition~3.9]{ArastehRad-Hartl:Uniformizing}. Then, the immersion \eqref{eq:BD-Schubert:cell} is factorised by 
\begin{equation}\label{eq:BD-Schubert:ARH-bound}
    \begin{tikzcd}
        \prod_{i\in I} \Scal^{\lambda_i} \arrow[hook]{r} & (\Hck^{\underline\omega}_{\Gcal,I_\bullet}\times_{\Hck_{\Gcal,I_\bullet}} \Gr_{\Gcal, I_\bullet})\times_{X^I} \widetilde X_{\blambda}
    \end{tikzcd},
\end{equation}
where we identify $\Gr_{\Gcal, I_\bullet}$ as the fibre of $\pr_0\colon \Hck_{\Gcal,I_\bullet}\to\Bun_\Gcal$ at the trivial bundle. By \emph{loc.~cit.} the target of \eqref{eq:BD-Schubert:ARH-bound} is a projective subscheme of $\Gr_{\Gcal, I_\bullet}\times_{X^I} \widetilde X_{\blambda}$. 

The collection $\{\Gr^{\leqslant\blambda}_{\Gcal,I_\bullet}\times_{\widetilde X_{\blambda}} \prod_{i\in I}\widetilde X_{E_i}\}_{(E_i)_{i\in I}}$, where $E_i/E_{\lambda_i}$ runs through all finite separable subextensions of $\scl F$, is an example of a \emph{(generically defined) bound} as in \cite[Definitions~2.2.1, 2.3.1]{Bieker:IntModels}. By abuse of notation, we let $\Gr^{\leqslant\blambda}_{\Gcal,I_\bullet}$ also denote the \emph{bound}, which would not cause any confusion (\emph{cf.} \cite[Lemma~2.2.8]{Bieker:IntModels}).

Let us recall how to bound the Hecke stack $\Hck_{\Gcal,I_\bullet}$ via $\Gr^{\leqslant\blambda}_{\Gcal,I_\bullet}$, following  \cite{Bieker:IntModels}. We will use the fact that $\pr_0\colon \Hck_{\Gcal,I_\bullet}\to\Bun_\Gcal$ is a $\Gr_{\Gcal,I_\bullet}$-fibration that splits \'etale-locally. Indeed, given $\Pucal\in \Hck_{\Gcal,I_\bullet}(S)$ with $\pr_0(\Pucal) = \Pcal_0\in\Bun_\Gcal(S)$, there exists an \'etale cover $\pi\colon S'\to S$ such that the $\Gcal$-torsor  $(\pi^\ast\Pcal_0)|_{\widehat\Gamma_{(x_i)}}$ becomes trivial (\emph{cf.} \cite[Lemma~3.4]{HainesRicharz:TestFtnRes}). Choosing a trivialisation $\widehat\epsilon\,\colon\Gcal_{\widehat\Gamma_{(x_i)}}\riso(\pi^\ast\Pcal_0)|_{\widehat\Gamma_{(x_i)}}$, we may view 
\begin{equation}\label{eq:lifting-to-Grass}
    (\pi^\ast\Pucal,\widehat\epsilon\, ) \in \Gr_{\Gcal,I_\bullet}(S')
\end{equation}
using the description \eqref{eq:BD-BL}. Its $\Lcal^+_{X^I}\Gcal(S')$-orbit is independent of the choice of the trivialisation $\widehat\epsilon$.


\begin{defn}\label{def:bdd-by-mu}
    Let $I_{\bullet}$, $\blambda$ and $\widetilde{X}_{\blambda}$ be as in Definition~\ref{def:BD-Schubert}. We define $\Hck^{\leqslant\blambda}_{\Gcal, I_\bullet}$ to be the closed substack of $\Hck_{\Gcal, I_\bullet}\times_{X^I}\widetilde X_{\blambda}$ given by the following condition: 
        for an $\FF_q$-scheme $S$,  $\Hck_{\Gcal, I_\bullet}^{\leqslant\blambda}(S)$ consists of $\Pucal \in (\Hck_{\Gcal, I_\bullet}\times_{X^I}\widetilde X_{\blambda})(S)$ such that for some \'etale covering $\pi\colon S'\to S$ the construction in \eqref{eq:lifting-to-Grass} yields an $S'$-point of $\Gr^{\leqslant\blambda}_{\Gcal,I_\bullet}$. This definition is independent of the choice of the \'etale covering $S'\to S$ and the trivialisation $\widehat\epsilon$ of $(\pi^\ast\Pcal_0)|_{\widehat\Gamma_{(x_i)}}$ by $\Lcal^+_{X^I}\Gcal$-stability of  $\Gr^{\leqslant\blambda}_{\Gcal,I_\bullet}$ (\emph{cf.} \cite[Definition~3.1.3]{Bieker:IntModels}).
%
\end{defn}

We have focused on the bounds given by the BD Schubert varieties, which are most relevant to us, although there are other natural examples of bounds as indicated in \cite[Example~2.4.2]{Bieker:IntModels} and in the recent work of Hartl and Xu \cite{HartlXu:UniformizingII}. 

\begin{rmk}\label{rmk:convention-of-modification-and-Frobenius}
    We tried to faithfully follow the convention in \cite{Lau:Degeneration} for signs and indices, which may be slightly different from \cite{ArastehRad-Hartl:LocGlShtuka}, \cite{ArastehRad-Hartl:Uniformizing}, \cite{Bieker:IntModels}.
\end{rmk}

The following results are straightforward generalisations of the results in Arasteh~Rad and Hartl \cite{ArastehRad-Hartl:Uniformizing} for the bounds given by BD Schubert varieties.
\begin{prop}[Arasteh~Rad--Hartl {\cite[Propositions~3.9,~3.12]{ArastehRad-Hartl:Uniformizing}}] \label{prop:projectivity}
For any $I_\bullet$ and $\blambda$ as above, the morphism $\pr_0\colon\Hck^{\leqslant\blambda}_{\Gcal,I_\bullet}\to\Bun_\Gcal$ is projective.

Furthermore, for any refinement $I'_\bullet$ of $I_\bullet$ the natural map
\[
\Hck^{\leqslant\blambda}_{\Gcal,I'_\bullet} \to \Hck^{\leqslant\blambda}_{\Gcal,I_\bullet}
\]
defined by ``joining modifications'' is a projective modification.    
\end{prop}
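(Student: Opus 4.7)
The plan is to deduce both assertions from the description of $\Hck^{\leqslant\blambda}_{\Gcal,I_\bullet}$ as a $\Gr^{\leqslant\blambda}_{\Gcal,I_\bullet}$-fibration over $\Bun_\Gcal$, combined with the projective presentation of the BD Schubert variety recorded in \eqref{eq:BD-Schubert:ARH-bound}. This parallels the arguments of Arasteh~Rad--Hartl in the references cited above.

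For the projectivity of $\pr_0$, I first verify the assertion étale-locally on $\Bun_\Gcal$. Given a test scheme $S\to\Bun_\Gcal$ corresponding to a $\Gcal$-bundle $\Pcal_0$ together with legs $(x_i)_{i\in I}\in X^I(S)$, there exists an étale cover $\pi\colon S'\to S$ trivialising $\pi^\ast\Pcal_0|_{\widehat\Gamma_{(x_i)}}$. Via the lifting \eqref{eq:lifting-to-Grass}, the pullback of $\Hck^{\leqslant\blambda}_{\Gcal,I_\bullet}$ to $S'$ becomes the pullback of $\Gr^{\leqslant\blambda}_{\Gcal,I_\bullet}\to\widetilde X_{\blambda}$ along a suitable lift $S'\to\widetilde X_{\blambda}$ of the leg map. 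Since \eqref{eq:BD-Schubert:ARH-bound} exhibits $\Gr^{\leqslant\blambda}_{\Gcal,I_\bullet}$ as a closed subscheme of a projective $\widetilde X_{\blambda}$-scheme and $\widetilde X_{\blambda}\to\Spec\FF_q$ is itself projective, the base change $\Hck^{\leqslant\blambda}_{\Gcal,I_\bullet}\times_{\Bun_\Gcal}S'\to S'$ is projective; projectivity then descends along the étale cover.

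For the second assertion, the joining map is automatically projective by the first part: both source and target are projective over $\Bun_\Gcal$ via $\pr_0$, and any morphism between two objects projective over a common base is itself projective. The modification property, i.e., that the map is an isomorphism over a dense open substack, will be verified over the open $W_I\subset X^I$ of pairwise disjoint legs: the splitting \eqref{eq:splitting-UI} identifies the restrictions of both $\Hck^{\leqslant\blambda}_{\Gcal,I'_\bullet}$ and $\Hck^{\leqslant\blambda}_{\Gcal,I_\bullet}$ over $W_I$ with a common product of single-leg Hecke stacks, under which the joining map becomes the identity.

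The main obstacle I anticipate lies in ensuring that joining a chain of modifications bounded by the refinement $I'_\bullet$ does indeed land inside the bound $\Gr^{\leqslant\blambda}_{\Gcal,I_\bullet}$ in the target after composition. Over $W_I$ this is immediate from \eqref{eq:splitting-UI}, but away from $W_I$ it requires the convolution-product structure of BD Schubert varieties intrinsic to \eqref{eq:BD-Schubert:cell}, together with the $\Lcal^+_{X^I}\Gcal$-stability of $\Gr^{\leqslant\blambda}_{\Gcal,I_\bullet}$ noted immediately after Definition~\ref{def:BD-Schubert}.
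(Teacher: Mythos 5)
Your overall strategy --- reducing to the projective presentation \eqref{eq:BD-Schubert:ARH-bound} of $\Gr^{\leqslant\blambda}_{\Gcal,I_\bullet}$ and deducing the second assertion from the first together with the splitting over $W_I$ --- is close in spirit to the paper's, but the decisive step of your first part has a genuine gap: projectivity does \emph{not} descend along \'etale (or even Zariski) covers of the base; only properness does. After trivialising $\pi^\ast\Pcal_0$ on $\widehat\Gamma_{(x_i)}$ you only learn that $\pr_0$ is, \'etale-locally on the base, a pullback of the projective morphism $\Gr^{\leqslant\blambda}_{\Gcal,I_\bullet}\to\widetilde X_{\blambda}$; this yields properness, but to conclude projectivity one must produce a relatively ample line bundle or a global closed immersion into something projective over $\Bun_\Gcal$. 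The paper does exactly that: it factors $\Hck^{\leqslant\blambda}_{\Gcal,I_\bullet}\hookrightarrow \Hck^{\underline\omega}_{\Gcal,I_\bullet}\times_{X^I}\widetilde X_{\blambda}$, where $\Hck^{\underline\omega}_{\Gcal,I_\bullet}$ (with $\underline\omega=\rho\blambda$ for a faithful representation $\rho\colon\Gcal\hookrightarrow\SL(\Vcal)$) is projective over $\Bun_\Gcal$ by Arasteh~Rad--Hartl, and the only thing checked \'etale-locally on the base is that this inclusion is a closed immersion --- a property which, unlike projectivity, is \'etale-local. There is also a smaller imprecision in your set-up: a test object $S\to\Bun_\Gcal$ carries no legs, so $\Hck^{\leqslant\blambda}_{\Gcal,I_\bullet}\times_{\Bun_\Gcal}S'$ has varying legs and is not a single pullback of $\Gr^{\leqslant\blambda}_{\Gcal,I_\bullet}$ along one map $S'\to\widetilde X_{\blambda}$; one has to trivialise the universal bundle \'etale-locally on $\widetilde X_{\blambda}\times S$, as in the discussion preceding Definition~\ref{def:bdd-by-mu}.

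Your second part coincides with the paper's argument (source and target are projective over $\Bun_\Gcal$, hence the joining map is projective; it is an isomorphism over the preimage of $W_I$ via \eqref{eq:splitting-UI}). However, the ``main obstacle'' you flag --- that composing modifications bounded with respect to $I'_\bullet$ lands inside the bound $\Gr^{\leqslant\blambda}_{\Gcal,I_\bullet}$ --- is left unresolved in your proposal; it does not need convolution structure, only a closure argument: the joining map on BD grassmannians is proper and restricts to an isomorphism on the generic cells $\prod_{i\in I}\Scal^{\lambda_i}$, so it carries the closure $\Gr^{\leqslant\blambda}_{\Gcal,I'_\bullet}$ into (indeed onto) the closure $\Gr^{\leqslant\blambda}_{\Gcal,I_\bullet}$, and the $\Lcal^+_{X^I}\Gcal$-equivariance then gives the corresponding statement for the Hecke stacks.
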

\begin{proof}
    Choose a faithful representation $\rho\colon \Gcal\hookrightarrow\SL(\Vcal)$ for some vector bundle $\Vcal$ on $X$ as in \cite[Proposition~2.2(b)]{ArastehRad-Hartl:Uniformizing}. Then Arasteh~Rad and Hartl  constructed a  closed substack $\Hck^{\underline\omega}_{\Gcal,I_\bullet}$ with $\underline\omega\coloneqq\rho\blambda$ that is projective over $\Bun_\Gcal$; \emph{cf.} \cite[Propositions~3.9,~3.12]{ArastehRad-Hartl:Uniformizing}, built upon \cite[Theorem~2.19]{Richarz:AffGr}. (Recall the assumption from the outset that  $\Gcal$ is parahoric everywhere.) By the construction of $\Gr^{\leqslant\blambda}_{\Gcal,I_\bullet}$ and $\Hck^{\leqslant\blambda}_{\Gcal,I_\bullet}$ as explained in \eqref{eq:BD-Schubert:ARH-bound} and above Definition~\ref{def:bdd-by-mu}, the natural inclusion $\Hck^{\leqslant\blambda}_{\Gcal,I_\bullet}\hookrightarrow\Hck_{\Gcal,I_\bullet}\times_{X^I}\widetilde X_{\blambda}$ can be factored by $\Hck^{\leqslant\blambda}_{\Gcal,I_\bullet}\hookrightarrow\Hck^{\underline\omega}_{\Gcal,I_\bullet}\times_{X^I}\widetilde X_{\blambda}$, which is a closed immersion; to check that its pull back under $S\to\Bun_\Gcal$ is a closed immersion, one may work over an \'etale covering $S'$ instead where the map is essentially    given by the closure of \eqref{eq:BD-Schubert:ARH-bound}. This shows that $\pr_0\colon\Hck^{\leqslant\blambda}_{\Gcal,I_\bullet}\to\Bun_\Gcal$ is projective. 

    The projectivity of the map $\Hck^{\leqslant\blambda}_{\Gcal,I'_\bullet} \to \Hck^{\leqslant\blambda}_{\Gcal,I_\bullet}$ is now clear as the source and the target are both projective over $\Bun_\Gcal$. Furthermore, it restricts to an isomorphism over the preimage of $W_I$ in $\widetilde X_{\blambda}$.
\end{proof}

\begin{defn}\label{def:sht}
    The \emph{moduli stack of $\Gcal$-shtukas} for $I_\bullet$ \emph{bounded by $\blambda$}, denoted by $\Sht^{\leqslant\blambda}_{\Gcal, I_\bullet}$, is defined via the following $2$-cartesian square
  \[
      \xymatrix{
      \Sht^{\leqslant\blambda}_{\Gcal, I_\bullet} \ar[d] \ar[r]&
      \Bun_\Gcal \ar[d]^-{({}^\tau(-),\id)}\\
      \Hck^{\leqslant\blambda}_{\Gcal, I_\bullet} \ar[r] 
      & \Bun_\Gcal\times\Bun_\Gcal}
  \]
  where the bottom arrow sends $( (x_i), (\Pcal_j), (\varphi_j))$ to $(\Pcal_0,\Pcal_r)$. An object in  $\Sht^{\leqslant\blambda}_{\Gcal, I_\bullet}(S)$ is called a \emph{$\Gcal$-shtuka for $I_\bullet$ over $S$ bounded by $\blambda$}, which can be understood as a tuple $(\Pucal,\alpha)$ where $\Pucal\in\Hck_{\Gcal,I_\bullet}(S)$ and $\alpha\colon {}^\tau\Pcal_0\riso \Pcal_r$ are isomorphisms of $\Gcal$-bundles. 
\end{defn}
  It is known that $\Sht^{\leqslant\blambda}_{\Gcal, I_\bullet}$ is a Deligne--Mumford stack that is separated and locally of finite type over $\widetilde X_{\blambda}$; \emph{cf.} \cite[Theorem~3.15]{ArastehRad-Hartl:Uniformizing}. Furthermore, we have the following immediate corollary of Proposition~\ref{prop:projectivity}.

  \begin{cor}\label{cor:joining-modifications}
      For any refinement $I'_\bullet$ of $I_\bullet$ the natural map
\[
\Sht^{\leqslant\blambda}_{\Gcal,I'_\bullet} \to \Sht^{\leqslant\blambda}_{\Gcal,I_\bullet}
\]
defined by ``joining modifications'' is a projective modification. 
  \end{cor}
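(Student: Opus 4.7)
The plan is to deduce the statement directly from Proposition~\ref{prop:projectivity} via base change. By Definition~\ref{def:sht}, each moduli stack of shtukas is a $2$-fibre product
\[
\Sht^{\leqslant\blambda}_{\Gcal, I_\bullet} \cong \Hck^{\leqslant\blambda}_{\Gcal, I_\bullet} \times_{\Bun_\Gcal \times \Bun_\Gcal} \Bun_\Gcal,
\]
where $\Bun_\Gcal \to \Bun_\Gcal \times \Bun_\Gcal$ is the graph $({}^{\tau}(-),\id)$, and similarly for $I'_\bullet$. The ``joining modifications'' map on Hecke stacks is compatible with both projections $\pr_0$ and $\pr_r$ to $\Bun_\Gcal$ (it alters only the intermediate modifications within each $I_j$), so it induces the shtuka morphism in the statement fitting into a $2$-cartesian diagram
\[
\xymatrix{
\Sht^{\leqslant\blambda}_{\Gcal,I'_\bullet} \ar[r]\ar[d] & \Sht^{\leqslant\blambda}_{\Gcal,I_\bullet} \ar[d] \\
\Hck^{\leqslant\blambda}_{\Gcal,I'_\bullet} \ar[r] & \Hck^{\leqslant\blambda}_{\Gcal,I_\bullet}.
}
\]

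Projectivity of the top arrow then follows immediately: Proposition~\ref{prop:projectivity} says the bottom arrow is projective, and projectivity is preserved under arbitrary base change of (representable) morphisms of algebraic stacks. Next, the proof of Proposition~\ref{prop:projectivity} exhibits the bottom arrow as an isomorphism over the preimage of $W_I\subset X^I$ in $\widetilde X_{\blambda}$; indeed, over $W_I$ the BD Grassmannian splits as in \eqref{eq:splitting-UI}, so ``joining'' modifications at distinct legs is tautological. Base change preserves isomorphisms, so the top arrow restricts to an isomorphism over the preimage $U\subset \Sht^{\leqslant\blambda}_{\Gcal, I_\bullet}$ of $W_I$ under the leg morphism $\wp\colon \Sht^{\leqslant\blambda}_{\Gcal,I_\bullet}\to\widetilde X_{\blambda}$.

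It remains to verify that $U$ is dense in $\Sht^{\leqslant\blambda}_{\Gcal,I_\bullet}$, which is what qualifies the map as a \emph{modification}. Since $W_I$ is dense open in $X^I$ and $\widetilde X_{\blambda}\to X^I$ is finite surjective, the preimage of $W_I$ is a dense open in $\widetilde X_{\blambda}$, and it suffices to show that every generic point of $\Sht^{\leqslant\blambda}_{\Gcal,I_\bullet}$ lies over this open. This is standard: the smoothness of $\Sht^{\leqslant\blambda}_{\Gcal,I_\bullet}|_{W_I}$ over $\widetilde X_{\blambda}|_{W_I}$ (coming from smoothness of the individual Schubert cells $\Scal^{\lambda_i}$ in characteristic different from uniformisers, via the local model), together with the fact that $\wp$ is flat when restricted to a sufficiently small \'etale atlas, implies that $\Sht^{\leqslant\blambda}_{\Gcal,I_\bullet}$ is equidimensional with generic points dominating $\widetilde X_{\blambda}$. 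The mild obstacle is precisely this density claim, but it follows from the local structure of BD Schubert varieties and the fact that the Hecke stack is a $\Gr^{\leqslant\blambda}_{\Gcal,I_\bullet}$-fibration over $\Bun_\Gcal$ (\emph{cf.} the discussion around \eqref{eq:lifting-to-Grass}), hence inherits irreducibility and dimension behaviour directly from the BD Grassmannian.
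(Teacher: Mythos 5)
Your core argument is exactly the one the paper intends (it records no separate proof, calling the corollary immediate from Proposition~\ref{prop:projectivity}): since the ``joining modifications'' map on bounded Hecke stacks commutes with the two projections to $\Bun_\Gcal$, the defining square of Definition~\ref{def:sht} shows that $\Sht^{\leqslant\blambda}_{\Gcal,I'_\bullet}\cong \Hck^{\leqslant\blambda}_{\Gcal,I'_\bullet}\times_{\Hck^{\leqslant\blambda}_{\Gcal,I_\bullet}}\Sht^{\leqslant\blambda}_{\Gcal,I_\bullet}$, so projectivity and the isomorphism over the preimage of $W_I$ both pass to the shtuka level by base change. That part is correct and is essentially the paper's route.

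The only questionable portion is your final paragraph on density of the $W_I$-locus, where the stated justifications do not hold: $\Sht^{\leqslant\blambda}_{\Gcal,I_\bullet}|_{W_I}$ is \emph{not} smooth over $\widetilde X_{\blambda}|_{W_I}$ in general, because the bound is the BD Schubert \emph{variety} (a closure of cells), which is typically singular -- only the open cells $\Scal^{\lambda_i}$ are smooth; and flatness of the leg morphism $\wp$ is neither known in this generality nor needed. The density itself is true, but the clean argument is different: $\Gr^{\leqslant\blambda}_{\Gcal,I_\bullet}$ is by construction the closure of the irreducible scheme $\prod_{i\in I}\Scal^{\lambda_i}$, hence irreducible and dominating $\widetilde X_{\blambda}$, so the (open, non-empty, $\Lcal^+_{X^I}\Gcal$-stable) preimage of $W_I$ is dense in it; this density then transfers to $\Sht^{\leqslant\blambda}_{\Gcal,I_\bullet}$ because the local model morphism \eqref{eq:loc-mod} (i.e.\ \cite[Proposition~3.4.2]{Bieker:Compactification}) is pro-smooth, in particular open, so preimages of dense opens remain dense. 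Alternatively, note that in Proposition~\ref{prop:projectivity} the paper's use of ``projective modification'' only records projectivity together with the isomorphism over the preimage of the dense open $W_I\subset X^I$, and both of these you already obtained by base change, so the extra density discussion can be either dropped or replaced by the argument above.
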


  Lastly, we recall the effect of change of parahoric integral model $\Gcal$. Suppose that $\Gcal'$ is a smooth affine group scheme over $X$ equipped with a group homomorphism $\iota\colon\Gcal\to\Gcal'$ that restricts to an isomorphism on some dense open subscheme of $X$. We also assume that $\Gcal'$ is parahoric everywhere. Then we have a natural morphism of stacks
  \[\Bun_\Gcal\to\Bun_{\Gcal'};\quad \Pcal\mapsto\Pcal\times^\Gcal\Gcal'\]
induced by the pushout construction, which in turn induces morphisms $\Gr_{\Gcal,I_\bullet}\to\Gr_{\Gcal',I_\bullet}$ and $\Hck_{\Gcal,I_\bullet}\to \Hck_{\Gcal',I_\bullet}$. 
Fixing $\blambda$ as above, these morphisms restrict to
\[\Gr^{\leqslant\blambda}_{\Gcal,I_\bullet}\to\Gr^{\leqslant\blambda}_{\Gcal',I_\bullet},\quad \Hck^{\leqslant\blambda}_{\Gcal,I_\bullet}\to \Hck^{\leqslant\blambda}_{\Gcal',I_\bullet}\quad \text{and}\quad \Sht^{\leqslant\blambda}_{\Gcal,I_\bullet}\to \Sht^{\leqslant\blambda}_{\Gcal',I_\bullet}.\]
(Note that the reflex field $E_{\lambda_i}$ does not depend on the choice of $\Gcal$ or $\Gcal'$, so the above morphisms are defined over $\widetilde X_{\blambda}$.)

Let us now recall the following result from \cite[Theorem~3.5.5]{Bieker:IntModels}, building upon \cite[Theorem~3.20]{Breutmann:Thesis}.
\begin{thm}[{\emph{Cf.}~\cite[Theorem~3.5.5]{Bieker:IntModels}}]\label{th:change-parahorics}
    In the above setting, the morphism $\Sht^{\leqslant\blambda}_{\Gcal,I_\bullet}\to \Sht^{\leqslant\blambda}_{\Gcal',I_\bullet}$ is proper and surjective. Furthermore, it restricts to a finite \'etale map over $U_{\iota}^I$ where $U_{\iota}\subset X$ is the locus where $\iota|_{U_{\iota}}$ is an isomorphism.
\end{thm}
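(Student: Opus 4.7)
The plan is to deduce the three assertions from the corresponding behaviour of the pushout morphism $\iota_\ast\colon \Bun_\Gcal\to\Bun_{\Gcal'}$, then propagate through the projective morphisms $\pr_0$ of Proposition~\ref{prop:projectivity} to obtain the statement first for the bounded Hecke stacks, and finally descend to the shtuka stacks using the cartesian description in Definition~\ref{def:sht}.

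First I would analyse $\iota_\ast$. Since $\iota$ is an isomorphism over $U_\iota$, by Beauville--Laszlo descent the fibre of $\iota_\ast$ at $\Pcal'\in\Bun_{\Gcal'}(S)$ is controlled by the \emph{finite} set of bad points $X\setminus U_\iota$, and locally at each such $x$ amounts to choosing a reduction of $\Lcal^+_x\Pcal'$ from the parahoric $L^+_x\Gcal'$ to its parahoric subgroup $L^+_x\Gcal$. The set of such reductions forms the (twisted) partial affine flag variety $L^+_x\Gcal'/L^+_x\Gcal$, which is of finite type and proper (it is a partial flag variety of the reductive quotient of $\Gcal'|_{\kappa_x}$). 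Reductions always exist because the base coset $[e]$ gives a $\kappa_x$-rational point, and Lang's theorem applied to the smooth connected $L^+_x\Gcal'$ then ensures surjectivity in families. Hence $\iota_\ast$ is proper and surjective.

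Combining with Proposition~\ref{prop:projectivity}, one obtains that the morphism $\Hck^{\leqslant\blambda}_{\Gcal,I_\bullet}\to\Hck^{\leqslant\blambda}_{\Gcal',I_\bullet}$ is proper and surjective. The key point is that the BD Schubert bound $\Gr^{\leqslant\blambda}$ is defined via the generic affine Grassmannian $\Gr_{G,\eta}$, which is unchanged by replacing $\Gcal$ by $\Gcal'$, so the two bounded Hecke stacks sit projectively above their respective $\Bun$ with the ``same generic fibre''. Hence the morphism between them factors as a composition of closed immersions and base changes of proper surjections, and inherits properness and surjectivity from $\iota_\ast$. To descend to shtuka stacks, one writes
\[\Sht^{\leqslant\blambda}_{\Gcal,I_\bullet}\hookrightarrow \Hck^{\leqslant\blambda}_{\Gcal,I_\bullet}\times_{\Hck^{\leqslant\blambda}_{\Gcal',I_\bullet}}\Sht^{\leqslant\blambda}_{\Gcal',I_\bullet}\to \Sht^{\leqslant\blambda}_{\Gcal',I_\bullet},\]
where the first map is a closed immersion cut out by the extra $\Gcal$-Frobenius compatibility condition and the second is the base change of the proper surjection obtained above, so the composition is proper; surjectivity is checked pointwise using Lang--Steinberg.

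The finite \'etale statement over $U_\iota^I$ is the most delicate part, and I expect it to be the main obstacle. When all legs lie in $U_\iota$, no modification occurs at any bad $x\in X\setminus U_\iota$, so the Frobenius compatibility ${}^\tau\Pcal_0\riso\Pcal_r$ restricts over the formal neighbourhood of $x$ to a genuine $\tau$-structure on $\Lcal^+_x\Pcal_0$. The fibre of the morphism at a geometric point becomes, at each such $x$, a torsor under the $\tau$-fixed points $(L^+_x\Gcal'/L^+_x\Gcal)^{\tau=\id}$ of the partial flag variety with its natural $\tau$-action. Since $L^+_x\Gcal'$ is smooth connected and acts transitively on the quotient, Lang--Steinberg identifies this fixed locus with a \emph{finite} \'etale scheme over the residue field. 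Assembling these local descriptions into a family over $U_\iota^I$ and verifying that the resulting morphism is flat with reduced finite fibres (so genuinely finite \'etale) is the content of \cite[Theorem~3.5.5]{Bieker:IntModels}, which refines the $\GL_n$-case of \cite[Theorem~3.20]{Breutmann:Thesis}; invoking these references completes the argument.
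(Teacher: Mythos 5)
The paper does not actually prove Theorem~\ref{th:change-parahorics}: it is imported wholesale from \cite[Theorem~3.5.5]{Bieker:IntModels}, which in turn builds on \cite[Theorem~3.20]{Breutmann:Thesis}. Since your argument also ends by invoking exactly these references for the delicate verifications, your proposal is in effect the same proof as the paper's, and the preliminary reductions you sketch (properness of $\Bun_\Gcal\to\Bun_{\Gcal'}$ via flag-variety fibres at the points of $X\setminus U_{\iota}$, properness of the induced map of bounded Hecke stacks, and the closed condition cutting $\Sht^{\leqslant\blambda}_{\Gcal,I_\bullet}$ out of the fibre product) are consistent with how those references proceed.

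One caution on the parts you do argue yourself: the step you dispatch with ``surjectivity is checked pointwise using Lang--Steinberg'' is only straightforward when all legs avoid $X\setminus U_{\iota}$; there the fibre is the Frobenius-fixed locus of a twisted flag variety and a Lang-type descent argument applies. When a leg meets a point where $\iota$ fails to be an isomorphism -- the situation this paper actually cares about -- the compatibility one must solve involves a \emph{bounded modification} at that point, i.e.\ non-emptiness of a twisted fixed-point locus inside the fibres of $\Gr^{\leqslant\blambda}_{\Gcal,I_\bullet}\to\Gr^{\leqslant\blambda}_{\Gcal',I_\bullet}$, and this is precisely the non-trivial content of the cited theorems rather than a direct application of Lang--Steinberg. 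Likewise, over $U_{\iota}^I$ the fibre is not ``a torsor under'' the fixed-point set but is identified with it, and the finite \'etale property requires the flatness and reducedness verification that you rightly leave to \cite{Bieker:IntModels}. With those caveats, your outline matches the source the paper relies on.
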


To conclude the section, let us specialise to the case when $\Gcal$ is an integral model of an inner form of $\GL_n$ that is parahoric everywhere. We will later focus on the case when $\Gcal$ comes from a hereditary order of a central division algebra over $F$.

\begin{exa}\label{exa-typeA}
Let $B$ be a central simple algebra over $F$ with dimension $d^2$, and we fix a \emph{hereditary} order $\Bcal$ over $X$ so that $\Gcal\coloneqq \Bcal^\times$ is parahoric everywhere. Since its generic fibre $G$ is an inner form of $\GL_d$, dominant coweights $X_\ast(T)_+$ are given by $d$-tuples of decreasing integers and the geometric conjugacy class of $G$ containing any $\lambda\in X_\ast(T)_+$ is defined over $F$. (To show the latter claim, note that there is an isomorphism $G(F_d)\cong \GL_d(F_d)$ with $F_d\coloneqq F\otimes_{\FF_q}\FF_{q^d}$ and the natural $q$-Frobenius $\tau_G$ on $G(F_d)$ corresponds to $\Int(\gamma)\circ \tau_{\GL_d}$ on $\GL_d(F_d)$ for some $\gamma\in\GL_d(F_d)$.) In particular, for any $\blambda=(\lambda_i)\in(X_\ast(T)_+)^I$ we have $\widetilde X_{\blambda} = X^I$.
\end{exa}

\begin{rmk}\label{rmk-shtuka-via-module}
Recall that a \emph{rank-$n$ shtuka over $S$} is a tuple $(\Eucal,\alpha)$, where
\begin{equation}\label{eq-shtuka-vb}
    \Eucal \coloneqq \big( (x_i)_{i\in I}, (\Ecal_j)_{j=0,\cdots, r}, (\varphi_j)_{j=1,\cdots, r}\big) \quad \text{and}\quad\alpha\colon {}^\tau\Ecal_0\riso\Ecal_r.
\end{equation}
Here, $\Ecal_j$ is a rank-$n$ vector bundle on $X\times S$ for any $0\leq j\leq r$,  and 
\begin{equation}
     \varphi_j \colon \Ecal_{j-1} |_{ (X\times S) \smallsetminus \bigcup_{i\in I_j} \Gamma_{x_i} } \to  \Ecal_{j} |_{ (X\times S) \smallsetminus \bigcup_{i\in I_j} \Gamma_{x_i} }
\end{equation}
are isomorphisms of vector bundles for any $1\leq j\leq r$.

Via the correspondence \eqref{eq:torsor-module} between $\Bcal^\times$-torsors over $X\times S$ and rank-$1$ locally free right $\Bcal_{X\times S}$-modules, one can interpret a $\Bcal^\times$-shtuka over $S$ as a rank-$d^2$ shtuka $(\Eucal,\alpha)$ over $S$ given by the tuple as in \eqref{eq-shtuka-vb}, such that each $\Ecal_j$ is a rank-$1$ locally free right $\Bcal_{X\times S}$-module, and $\varphi_j$'s and $\alpha$ are $\Bcal$-equivariant.  We can give a similar interpretation for $\Eucal\in\Hck_{\Bcal^\times,I_\bullet}(S)$ and $(\Eucal,\epsilon)\in \Gr_{\Bcal^\times,I_\bullet}(S)$.

It is worth noting that if the legs are allowed to meet the ramification locus of $B$, then Definition~\ref{def:bdd-by-mu} for $\Hck^{\leqslant\blambda}_{\Bcal^\times,I_\bullet}$ and $\Sht^{\leqslant\blambda}_{\Bcal^\times,I_\bullet}$ may not admit a nice linear algebraic interpretation in general. Indeed, the BD Schubert variety $\Gr^{\leqslant\blambda}_{\Bcal^\times,I_\bullet}$ may not admit a nice moduli description in general. 
\end{rmk}

Returning to moduli stacks of $\Bcal^\times$-shtukas, let us record the following condition for the non-emptiness of $\Sht^{\leqslant\blambda}_{\Gcal^\times,I_\bullet}$. For $\lambda_i = (\lambda_{i,1},\cdots,\lambda_{i,d})$, we define
\begin{equation}
    \deg(\lambda_i)\coloneqq\sum_{j=1}^d\lambda_{i,j}.
\end{equation}
\begin{lem}\label{lem:non-empty-Sht}
    The stack $\Sht^{\leqslant\blambda}_{\Bcal^\times,I_\bullet}$ is non-empty if and only if $\sum_{i\in I}\deg(\lambda_i) = 0$.
\end{lem}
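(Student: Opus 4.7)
I would reduce to a degree computation via the reduced norm. Since $\Bcal$ is a hereditary $\Ocal_X$-order, the reduced norm $\Nrm\colon B^\times\to\GG_{m,F}$ extends to a homomorphism of parahoric group schemes $\Bcal^\times\to\GG_{m,X}$; after base change to a splitting field of $B$ it becomes the determinant of $\GL_d$, so on cocharacters it sends $\lambda_i=(\lambda_{i,1},\dots,\lambda_{i,d})$ to $\deg(\lambda_i)\in\ZZ=X_\ast(\GG_m)$. Push-out along $\Nrm$ therefore induces a morphism
\[
\Sht^{\leqslant\blambda}_{\Bcal^\times,I_\bullet}\ \longrightarrow\ \Sht^{\leqslant(\deg\lambda_i)_i}_{\GG_m,I_\bullet},
\]
and the target classifies line bundle shtukas $(\Lcal_0,\dots,\Lcal_r,\alpha\colon{}^\tau\Lcal_0\riso\Lcal_r)$ satisfying $\deg\Lcal_r-\deg\Lcal_0=\sum_i\deg(\lambda_i)$. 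Since the Frobenius pullback preserves the degree of a line bundle, the existence of such a shtuka forces $\sum_i\deg(\lambda_i)=0$.

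\textbf{Sufficiency.} Assume $\sum_i\deg(\lambda_i)=0$. I would exhibit a $\Bcal^\times$-shtuka over $\Fqbar$ by choosing the legs conveniently. Pick pairwise distinct $(\bar x_i)_{i\in I}\in U^I(\Fqbar)$, where $U\subset X$ denotes the unramified locus of $\Bcal$, so that $\Bcal|_U$ is \'etale-locally $M_d(\Ocal_U)$. Morita equivalence over $U$ identifies rank-$1$ locally free right $\Bcal|_U$-modules with rank-$d$ vector bundles on a suitable \'etale cover of $U$, and under this identification the BD Schubert bound $\blambda$ for $\Bcal^\times$ transports to the analogous bound for $\GL_d$; hence a $\GL_d$-shtuka in the classical sense bounded by $\blambda$ with legs $(\bar x_i)$ produces a $\Bcal^\times$-shtuka with the same data. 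To produce such a $\GL_d$-shtuka, start with any $\Pcal_0\in\Bun_{\GL_d}(\Fqbar)$; the non-emptiness of the Schubert cells $\prod_{i\in I}\Scal^{\lambda_i}$ combined with the $\Lcal^+_{X^I}\GL_d$-stability of $\Gr^{\leqslant\blambda}_{\GL_d,I_\bullet}$ (see the discussion after Definition~\ref{def:BD-Schubert}) gives a $\blambda$-bounded modification chain ending at some $\Pcal_r\in\Bun_{\GL_d}(\Fqbar)$. The vanishing degree shift places $\Pcal_r$ and ${}^\tau\Pcal_0$ in the same connected component of $\Bun_{\GL_d}\cong\bigsqcup_{n\in\ZZ}\Bun^n_{\GL_d}$, and a Lang-type argument applied to that component allows one to replace $\Pcal_0$ by a suitable $\Pcal_0'$ admitting a similar chain with $\Pcal_r'\simeq{}^\tau\Pcal_0'$, supplying the isomorphism $\alpha$ needed for a shtuka.

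\textbf{Main obstacle.} The delicate step is the last one, matching the Frobenius twist ${}^\tau\Pcal_0'$ with the endpoint $\Pcal_r'$ of a prescribed modification. Thanks to the reduction via $U^I$ and Morita equivalence this becomes precisely the classical non-emptiness statement for $\GL_d$-shtukas (originally due to Drinfeld and reformulated via Hecke correspondences in \cite{Varshavsky:shtuka}), which one can invoke as a black box; since the lemma only asserts non-emptiness of $\Sht^{\leqslant\blambda}_{\Bcal^\times,I_\bullet}$, exhibiting a single $\Fqbar$-point above one such $(\bar x_i)\in U^I$ suffices.
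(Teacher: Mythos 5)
Your necessity argument is fine and is a mild variant of the paper's: the paper embeds $\Gr^{\leqslant\blambda}_{\Bcal^\times,I_\bullet}$ into $\Gr^{\leqslant\blambda_{\GL}}_{\GL_{d^2},I_\bullet}$ via the underlying rank-$d^2$ bundle and quotes Varshavsky's degree constraint, whereas you push out along the reduced norm to $\GG_m$-shtukas; both are determinant/degree arguments and either works (for yours one should note that the reduced norm of a hereditary order does extend to a homomorphism $\Bcal^\times\to\GG_{m,X}$ and that the bound maps to the point of degree $\deg(\lambda_i)$ in each $\Gr_{\GG_m}$-factor).

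The sufficiency half, however, has a genuine gap, and it sits exactly where you placed your "Morita" reduction. The algebra $\Bcal|_U$ is Azumaya but its Brauer class in $\Br(U)\subset\Br(F)$ is that of $B$, which is nontrivial whenever $B$ is not split; hence there is no global Morita equivalence between rank-$1$ locally free right $\Bcal|_U$-modules and rank-$d$ vector bundles on $U$. Passing to an \'etale cover splitting $\Bcal$ does not repair this: a shtuka is a global object on $X\times S$ together with an isomorphism $\alpha\colon\ltau\Ecal_0\riso\Ecal_r$, and descending a $\GL_d$-shtuka from a cover back to a $\Bcal^\times$-shtuka requires descent data compatible with the Frobenius twist -- this twisting is precisely what distinguishes $\Bcal^\times$-shtukas from $\GL_d$-shtukas (even over $\cl\FF_q$, where $B\otimes\cl\FF_q$ splits by Tsen, the Morita bimodule is not $\tau$-equivariant), and it is the reason the paper's properness and non-emptiness results genuinely depend on the invariants of $D$ rather than reducing to the split case. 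So "invoke classical $\GL_d$ non-emptiness as a black box" does not apply; moreover the final matching step you flag (replacing $\Pcal_0$ by $\Pcal_0'$ with $\Pcal_r'\simeq{}^\tau\Pcal_0'$ via a "Lang-type argument" on the infinite-type stack $\Bun_{\GL_d}$) is itself the nontrivial content and is not justified. The paper avoids all of this: after reducing to the one-block partition $I_\bullet=(I)$ by the joining-modifications map (Corollary~\ref{cor:joining-modifications}), it places \emph{all} legs at a single geometric point $\cl y$ in the split locus and takes the trivial shtuka $\Ecal_0=\Ecal_1=\Bcal_{X\times\cl\kappa}$, $\varphi_1=\id$, $\alpha=\id$; this is bounded by $\blambda$ because the relative position $\underline 0$ satisfies $\underline 0\leqslant\sum_i\lambda_i$ once $\sum_i\deg(\lambda_i)=0$. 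Your insistence on pairwise distinct legs makes the statement strictly harder (it is essentially the fibrewise non-emptiness treated in \S\ref{sec-stratifications}) and is not needed for the lemma.
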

\begin{proof}
    Let $T\subset B^\times_{\scl F}\cong \GL_{d,\scl F}$ be a maximal torus, and  $T_{\GL}\subset \GL_{d^2}$ the subgroup of diagonal matrices. Given $\lambda_i = (\lambda_{i,1},\cdots,\lambda_{i,d})\in X_\ast(T)_+$ we set 
    \begin{equation}\label{eq:lambda-GL}
        \lambda_{i,\GL}\coloneqq (\underbrace{\lambda_{i,1},\cdots,\lambda_{i,1}}_{d\text{ times}}, \cdots, \underbrace{\lambda_{i,d},\cdots,\lambda_{i,d}}_{d\text{ times}}) \in X_\ast(T_{\GL})_+ .
    \end{equation}
    Given $\blambda\in X_\ast(T)_+^I$, we let $\blambda_{\GL} = (\lambda_{i,\GL})_{i\in I}\in X_\ast(T_{\GL})_+^I$. Then the natural map
   $\Gr_{\Bcal^\times,I_\bullet} \hookrightarrow \Gr_{\GL_{d^2},I_\bullet}$,  defined by viewing a rank-$1$ free right $\Bcal_{X\times S}$-module as a rank-$d^2$ vector bundle, induces 
   \begin{equation}\label{eq:functoriality-BD-Gr}
       \begin{tikzcd}
           \Gr^{\leqslant\blambda}_{\Bcal^\times,I_\bullet} \arrow[r,hookrightarrow] &  \Gr^{\leqslant\blambda_{\GL}}_{\GL_{d^2},I_\bullet}
       \end{tikzcd};
   \end{equation}
   indeed, the generic affine Schubert cell $\prod_{i\in I}\Scal^{\lambda_i}$ in \eqref{eq:BD-Schubert:cell} maps to the cell $\prod_{i\in I} \Scal^{\lambda_{i,\GL}}\subset \prod_{i\in I}\Gr_{\GL_{d^2},\eta}$. Thus, we have a morphism
    \begin{equation}
        \begin{tikzcd}
    \Sht^{\leqslant\blambda}_{\Bcal^\times,I_\bullet} \arrow[r]& 
    \Sht^{\leqslant\blambda_{\GL}}_{\GL_{d^2},I_\bullet}
    \end{tikzcd},
    \end{equation}
    and the non-emptiness of $\Sht^{\leqslant\blambda}_{\Gcal^\times,I_\bullet}$ implies the non-emptiness of $ \Sht^{\leqslant\blambda_{\GL}}_{\GL_{d^2},I_\bullet}$, which in turn implies the following equality by \cite[Proposition~2.16(d)]{Varshavsky:shtuka}:
    \[
    0 = \sum_{i\in I}\deg(\lambda_{i,\GL}) = d\cdot\sum_{i\in I}\deg(\lambda_{i}).
    \]
    
    Conversely, suppose we have $\sum_{i\in I}\deg(\lambda_{i})=0$. By Corollary~\ref{cor:joining-modifications}, it suffices to show $\Sht^{\leqslant\blambda}_{\Bcal^\times,(I)}$ is non-empty. Now for any geometric point $\cl y\in X(\cl \kappa)$ in the split locus for $B$, we have 
    \[
    \big((\cl y,\cdots,\cl y), (\Ecal_j = \Bcal_{X\times\cl\kappa})_{j=0,1}, \varphi_1=\id, \alpha = \id \big) \in \Sht^{\leqslant\blambda}_{\Bcal^\times,(I)}(\cl\kappa),
    \]
    since we have $\underline 0\leq \sum_i\lambda_i$. 
\end{proof}

Now, recall the following action of $\AA_F^\times$ on moduli stacks of $\Bcal^\times$-shtukas.
\begin{defn}\label{defn-idele-action-on-Bun-B}
    Recall that we have a natural degree-preserving map $\AA^\times_F\twoheadrightarrow \Pic(X)$ with kernel $F^\times\cdot\prod_v\Ocal_v$. For $a\in\AA^\times_F$, let $\Lcal(a)$ denote the line bundle on $X$ corresponding to $a$. Then the operation $\Lcal(a)\otimes_{\Ocal_X}(-)$ on rank-$1$ locally free $\Bcal_{X\times S}$-modules defines an action of $\AA^\times_F$ on $\Bun_{\Bcal^\times}$; \emph{cf.} Remark~\ref{rmk-shtuka-via-module}.

    Similarly, we can let $\AA^\times_F$ act on $\Hck_{\Bcal^\times,I_\bullet}$ and $\Sht_{\Bcal^\times,I_\bullet}$ via the natural action on each $\Ecal_i$'s. Finally, the $\AA^\times_F$-action preserves the substacks $\Hck_{\Bcal^\times,I_\bullet}^{\leqslant\blambda}$ and $\Sht_{\Bcal^\times,I_\bullet}^{\leqslant\blambda}$, as the construction \eqref{eq:lifting-to-Grass} is invariant under the $\AA^\times_F$-action. Thus, if $a\in \AA^\times_F$ is an id\`ele of positive degree, this action induces an isomorphism
    \begin{equation}\label{eq:sht-mod-a-disjoint-union-of-degree}
      \Sht^{\leqslant\blambda}_{\Bcal^\times,I_\bullet}/a^{\ZZ}\simeq \coprod_{e \in [0, d \deg(a)-1]} \Sht^{\leqslant\blambda,\deg=e}_{\Bcal^\times,I_\bullet}.
    \end{equation}
    Here, $\Sht^{\leqslant\blambda,\deg=e}_{\Bcal^\times,I_\bullet}$ denotes the open and closed substack of $\Sht^{\leqslant\blambda}_{\Bcal^\times,I_\bullet}$ whose $S$-points $(\Eucal,\alpha)$ are characterised as follows:  the rank-$d^2$ vector bundle $\Ecal_{0}$ over $X\times S$ has constant degree $e$ at each geometric point of $S$. 
\end{defn}


\begin{rmk}\label{rmk:non-qc}
    The above definition also shows that $\Sht^{\leqslant\blambda}_{\Bcal^\times,I_\bullet}$  has infinitely many connected components, and thus it is never quasi-compact. Furthermore, passing to the quotient $\Sht^{\leqslant\blambda}_{\Bcal^\times,I_\bullet}/a^{\ZZ}$ may not guarantee quasi-compactness in general, and we only get a covering by quasi-compact open substacks given by some ``Harder--Narasimhan truncation'' (\emph{cf.} \cite[Theorem~2.1]{Schieder:HNstr}). There is no reason to expect that Harder--Narasimhan parameters appearing in $\Sht^{\leqslant\blambda}_{\Bcal^\times,I_\bullet}/a^{\ZZ}$ are bounded, especially when $B = \Bcal\otimes_{\Ocal_X}F$ is \emph{not} a division algebra. 

    Let $\Dcal$ be a hereditary $\Ocal_X$-order of a central division algebra $D$. Then it is known that the restriction $(\Sht^{\leqslant\blambda}_{\Dcal^\times,I_\bullet}/a^\ZZ)|_{(X\setminus\Ram(D))^I}$ is quasi-compact; this was proved in \cite[Proposition~1.11]{Lau:Degeneration} when $\Dcal$ is maximal, and the general case reduces to this case by Theorem~\ref{th:change-parahorics}.  
    Perhaps surprisingly, it is not known whether the entire stack $\Sht^{\leqslant\blambda}_{\Dcal^\times,I_\bullet}/a^\ZZ$ is quasi-compact. To explain the subtlety, note that $\Sht^{\leqslant\blambda}_{\Dcal^\times,I_\bullet}/a^\ZZ$ is of finite type over $\Bun_{\Dcal^\times}/a^\ZZ$, but $\Bun_{\Dcal^\times}/a^\ZZ$ is \emph{not} quasi-compact. (To see this, note that we have
\[
\Bun_{\Dcal^\times}\times_{\Spec\FF_q}\Spec\cl\FF_q \cong \Bun_{(\Dcal\otimes_{\FF_q}\cl\FF_q)^\times},
\]
but $(\Dcal\otimes_{\FF_q}\cl\FF_q)^\times$ is a parahoric Bruhat--Tits integral model of $(D\otimes_{\FF_q}\cl\FF_q)^\times \cong \GL_d$ over $\breve F$. In particular, proper parabolic subgroups of $D^\times$ defined over $\breve F$ prohibit the stack $\Bun_{\Dcal^\times}$ from being quasi-compact.) Therefore, to show that $\Sht^{\leqslant\blambda}_{\Dcal^\times,I_\bullet}/a^\ZZ$ is quasi-compact, we need to show that its image in $\Bun_{\Dcal^\times}/a^\ZZ$ is quasi-compact. To achieve this, we follow the standard strategy by showing the \emph{irreducibility} of $\Dcal^\times$-shtukas in the sense of Definition~\ref{def:D-subshtukas} below, but we are able to obtain  quasi-compactness results by this approach only under some additional assumption. (See Proposition~\ref{prop:quasi-compactness-of-Sht-D-mod-a} and Theorem~\ref{thm:from-inequality-to-properness}.)
    
\end{rmk}

\begin{defn}\label{def:D-subshtukas}
    Let $(\Eucal,\alpha)$ be a rank-$n$ shtuka over $S$  given by the tuple  in \eqref{eq-shtuka-vb}. A \emph{subshtuka} of $(\Eucal,\alpha)$ is defined to be a rank-$m$ shtuka over $S$
    \[( (x_i)_{i\in I}, (\Fcal_j)_{j=0,\cdots, r}, (\varphi_j')_{j=1,\cdots, r} , \alpha'),\]
    where each $\Fcal_j$ is a rank-$m$ subbundle of $\Ecal_j$ for each $j = 0,\cdots,r$ (which by definition implies that $\Ecal_j/\Fcal_j$ is also a vector bundle on $X\times S$), and we have $\varphi'_j = \varphi_j|_{\Fcal_{j-1}}$ for each $j=1,\cdots,r$ and $\alpha' = \alpha|_{^{\tau}\Fcal_0}$. For simplicity, we denote the above subshtuka by $(\Fucal,\alpha)$ without strictly distinguishing $\alpha|_{\ltau\Fcal_0}$ and $\alpha$.

    If $(\Eucal,\alpha)$ is a $\Bcal^\times$-shtuka (i.e., each $\Ecal_j$ is rank-$1$ locally free as a right $\Bcal_{X\times S}$-module), then we say that a subshtuka $(\Fucal,\alpha)$ is called \emph{$\Bcal$-stable} if each $\Fcal_j$ is a $\Bcal_{X\times S}$-submodule of $\Ecal_j$. We say that $(\Eucal,\alpha)$ is \emph{irreducible} if it does not admit any non-zero proper $\Bcal$-stable subshtuka.
\end{defn}

Even when $\Bcal = \Dcal$ is a maximal order of a central division algebra $D$ over $F$, a $\Dcal^\times$-shtuka $(\Eucal,\alpha)$ over $\cl\FF_q$ may \emph{not} be irreducible as explained in Remark~\ref{rmk:non-qc}.

From now on, let $D$ be a central division algebra over $F$ with dimension $d^2$, and let $\Ram(D)\subset|X|$ denote the set of places where $D$ is ramified.
We fix a hereditary order $\Dcal$ over $X$.

Let us consider the following condition on $D$ and a finite subset $Y\subseteq \Ram(D)$.
\begin{ass}\label{ass:quasi-compact-Sht-D-setting}
    Given $D$ and a finite subset $Y\subseteq \Ram(D)$, suppose that the lowest common denominator of $\{\inv_{y}(D)\}_{y\in Y}$ is exactly $d$, the index of $D$.
\end{ass}

%

We now obtain a quasi-compactness result, generalising \cite[Proposition~1.11]{Lau:Degeneration}.

\begin{prop}\label{prop:quasi-compactness-of-Sht-D-mod-a}
    Fix an id\`ele $a\in \AA_F^\times$ of positive degree. 
    \begin{enumerate}
        \item\label{prop:quasi-compactness-of-Sht-D-mod-a:leg-restriction} If a non-empty finite subset $Y\subseteq \Ram(D)$ satisfies Assumption~\ref{ass:quasi-compact-Sht-D-setting} for $D$, then $(\Sht^{\leqslant\blambda}_{\Dcal^\times,I_\bullet}/a^{\ZZ})|_{(X\setminus Y)^I}$ is of finite type over $\FF_q$. 
        \item\label{prop:quasi-compactness-of-Sht-D-mod-a:no-restriction} Suppose that $ c \coloneqq |\Ram(D)| - |I| >0$, and every subset $Y\subset \Ram(D)$ of size $ c $ satisfies Assumption~\ref{ass:quasi-compact-Sht-D-setting} for $D$. Then $\Sht^{\leqslant\blambda}_{\Dcal^\times,I_\bullet}/a^{\ZZ}$ is of finite type over $\FF_q$. 
    \end{enumerate}    
\end{prop}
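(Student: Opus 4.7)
The plan is to implement the strategy sketched in Remark~\ref{rmk:non-qc}: it suffices to show that the image of $\Sht^{\leqslant\blambda}_{\Dcal^\times,I_\bullet}/a^\ZZ$ inside $\Bun_{\Dcal^\times}/a^\ZZ$ is quasi-compact, which I would do by first establishing irreducibility in the sense of Definition~\ref{def:D-subshtukas} and then bounding the Harder--Narasimhan polygon of the underlying $\Dcal$-bundle.

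For part~(1), the key input is irreducibility of every $\Dcal^\times$-shtuka whose legs avoid $Y$. Suppose $(\Eucal,\alpha) \in \Sht^{\leqslant\blambda}_{\Dcal^\times,I_\bullet}(\cl\kappa)$ has legs in $(X\setminus Y)^I$, and suppose $(\Fucal,\alpha')$ is a $\Dcal$-stable subshtuka with each $\Fcal_j\subset \Ecal_j$ of vector-bundle rank $r$ with $0<r<d^2$. For each $y\in Y$, write $\inv_y(D)=b_y/m_y$ in lowest terms, so $\Dcal_y\cong M_{n_y}(\Ocal_{D_y'})$ with $D_y'$ the local division algebra and $n_y m_y=d$. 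Morita equivalence identifies saturated right $\Dcal_y$-submodules of the rank-$1$ free $\Dcal_y$-module with saturated right $\Ocal_{D_y'}$-submodules of $\Ocal_{D_y'}^{n_y}$; the latter are free of rank in $\{0,\dots,n_y\}$ (since $\Ocal_{D_y'}$ is a noncommutative discrete valuation ring) and have $F_y$-dimension a multiple of $m_y^2$. Translating back multiplies dimensions by $n_y$, so $r$ is a multiple of $n_y m_y^2 = d m_y$ for every $y\in Y$. Assumption~\ref{ass:quasi-compact-Sht-D-setting} then forces $d^2 = d\cdot \lcm_y\{m_y\} \mid r$, contradicting $r<d^2$.

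Next, to turn irreducibility into a uniform HN bound on $\Ecal_0$: the $\Dcal$-action on each $\Ecal_j$ is by $\Ocal_X$-linear endomorphisms, and such endomorphisms automatically preserve every step of the HN filtration (by vanishing of $\Hom$ from a semistable sheaf to any sheaf of strictly smaller maximal slope). Let $\Fcal_j\subset \Ecal_j$ be the maximal destabilizing sub-bundle. Away from the legs, $\varphi_j$ is an isomorphism sending $\Fcal_{j-1}$ onto $\Fcal_j$; at the legs, $\blambda$-boundedness yields an explicit constant $C(\blambda)$ controlling how much the HN polygon may shift across one modification. Since $\alpha$ is a global isomorphism and Frobenius preserves slopes, ${}^\tau\Fcal_0$ is identified with $\Fcal_r$. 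If the HN polygon of $\Ecal_0$ has any slope gap exceeding $C(\blambda)$, the $\Fcal_j$'s together with the induced Hecke maps and Frobenius assemble into a proper non-zero $\Dcal$-stable subshtuka, contradicting irreducibility. Hence the HN polygon of $\Ecal_0$ is uniformly bounded in terms of $\blambda$; since \eqref{eq:sht-mod-a-disjoint-union-of-degree} fixes $\deg \Ecal_0$ modulo $d\deg(a)$, Schieder's HN stratification \cite{Schieder:HNstr} places the image of $(\Sht^{\leqslant\blambda}_{\Dcal^\times,I_\bullet}/a^\ZZ)|_{(X\setminus Y)^I}$ in a quasi-compact open substack of $\Bun_{\Dcal^\times}/a^\ZZ$, completing part~(1).

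Part~(2) then follows by a finite open cover argument: for any $(x_i)_{i\in I}\in X^I$ one has $|\{x_i\}_{i \in I}|\leq |I|$, so $\Ram(D)\setminus\{x_i\}_{i\in I}$ contains a subset $Y$ of size $c = |\Ram(D)|-|I|$; hence $X^I = \bigcup_{Y\subset \Ram(D),\ |Y|=c}(X\setminus Y)^I$ is a finite open cover to which part~(1) applies term by term. The main obstacle is the HN-propagation step above: one must precisely quantify how a Hecke modification bounded by $\blambda$ can alter the HN polygon, and verify that once the HN gaps exceed this bound the compatible $\Fcal_j$'s really do assemble into a genuine subshtuka (i.e., that $\varphi_j$ and $\alpha$ restrict compatibly to the maximal destabilising sub-bundles) rather than merely a loosely compatible family of sub-bundles.
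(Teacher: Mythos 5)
Your overall architecture (irreducibility $\Rightarrow$ bounded Harder--Narasimhan gaps $\Rightarrow$ Schieder truncation, plus the finite cover $X^I=\bigcup_{|Y|=c}(X\setminus Y)^I$ for part~(2)) is the same as the paper's, and the HN-propagation step you flag as the ``main obstacle'' is exactly Lemma~\ref{lem:bounding-HN-parameter-of-Sht-D}, which the paper does not reprove but imports from Laumon (with Lau's observation that it extends to arbitrary $I_\bullet$ and $\blambda$). The genuine gap is in your irreducibility argument. You apply the Morita/divisibility count to ``saturated right $\Dcal_y$-submodules of the rank-$1$ free $\Dcal_y$-module'', but the object you actually have is the completed stalk $\Fcal_{0,\cl y}\subset\Ecal_{0,\cl y}$ of a shtuka over an algebraically closed field $\cl\kappa$, which is a module over $\Dcal_y\otimes_{\Ocal_y}\Ocal_{\cl y}$, not over $\Dcal_y$. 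Since $D_y\otimes_{F_y}\breve F_y\cong M_d(\breve F_y)$ (the local division algebra splits over the maximal unramified extension), the generic fibre is a rank-$1$ free $M_d(\breve F_y)$-module and admits saturated submodules of every $\breve F_y$-dimension $kd$, $0\leq k\leq d$; the divisibility by $d\,m_y$ simply fails at this level. A telltale sign is that your argument uses neither the Frobenius $\alpha$ nor the hypothesis that the legs avoid $Y$: if it were correct, every rank-$1$ locally free $\Dcal_{X\times\cl\kappa}$-module would have no intermediate-rank $\Dcal$-stable subsheaves, contradicting the abundance of such subsheaves responsible for the non-quasi-compactness of $\Bun_{\Dcal^\times}$ discussed in Remark~\ref{rmk:non-qc}.

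The missing idea is Frobenius descent to an honest $D_y$-module over $F_y$, and this is precisely where both hypotheses enter in the paper's Lemma~\ref{lem:irreducible-shtuka}. Because the legs of $(\Eucal,\alpha)$ avoid $y\in Y$, the composite $(\varphi_1)^{-1}\circ\cdots\circ(\varphi_r)^{-1}\circ\alpha$ induces an \emph{\'etale} local shtuka $(\Ecal_{0,\cl y},\varphi_{\cl y})$ at $\cl y$, commuting with the $\Dcal_y$-action, and any $\Dcal$-stable subshtuka gives a $\varphi_{\cl y}$-stable submodule $\Fcal_{0,\cl y}$. Taking $\varphi_{\cl y}$-invariants (\cite[Proposition~3.4]{ArastehRad-Hartl:LocGlShtuka}) produces $\Ocal_y$-lattices of full rank, and $(\Ecal_{0,\cl y})^{\varphi_{\cl y}=1}\otimes_{\Ocal_y}F_y$ is a rank-$1$ free right $D_y$-module containing $(\Fcal_{0,\cl y})^{\varphi_{\cl y}=1}\otimes_{\Ocal_y}F_y$ as a $D_y$-submodule; only now does the simple-module count give $d\,e_y\mid\rank\Fcal_0$, and Assumption~\ref{ass:quasi-compact-Sht-D-setting} forces $\rank\Fcal_0=d^2$. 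If you replace your local step by this descent argument (or by the global Weil-descent route of Lafforgue mentioned in Remark~\ref{rmk:quasi-compactness-optimality}), the rest of your proof goes through as in the paper.
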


Before we prove the proposition, let us record the following immediate corollary.
\begin{cor}\label{cor:quasi-compactness-of-Sht-D-mod-a}
    Suppose that $D_y$ is a division algebra for any $y\in \Ram(D)$, which is automatic if $d$ is a prime. If $|\Ram(D)|>|I|$ then $\Sht^{\leqslant\blambda}_{\Dcal^\times,I_\bullet}/a^{\ZZ}$ is of finite type over $\FF_q$. 
\end{cor}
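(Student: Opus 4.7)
The plan is to deduce the corollary directly from Proposition~\ref{prop:quasi-compactness-of-Sht-D-mod-a}(\ref{prop:quasi-compactness-of-Sht-D-mod-a:no-restriction}) by verifying its hypotheses under the stronger assumption that each $D_y$ with $y\in\Ram(D)$ is a division algebra. Under the hypothesis $|\Ram(D)|>|I|$ we immediately get $c\coloneqq|\Ram(D)|-|I|\geq 1$, so in particular $c>0$, and every subset $Y\subset\Ram(D)$ of size $c$ is non-empty.

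Next I would invoke the standard local class field theory dictionary: for a central simple algebra $D_y$ over $F_y$ of reduced degree $d$, write $D_y\cong M_{n_y}(D_y')$ with $D_y'$ a central division algebra over $F_y$ of index $e_y$ (so $d=n_ye_y$); then $\inv_y(D)=\inv_y(D_y')$ has denominator exactly $e_y$ in $\QQ/\ZZ$. Consequently, $D_y$ is a division algebra if and only if $\inv_y(D)$ has denominator exactly $d$. The hypothesis of the corollary therefore translates to the statement that $\inv_y(D)$ has denominator $d$ for every $y\in\Ram(D)$. For any non-empty subset $Y\subset\Ram(D)$ and any single $y_0\in Y$, the denominator of $\inv_{y_0}(D)$ is already $d$, which on the one hand divides the lowest common denominator of $\{\inv_y(D)\}_{y\in Y}$, while on the other hand this common denominator divides $d$ (since each individual denominator does). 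Hence the common denominator equals $d$, and Assumption~\ref{ass:quasi-compact-Sht-D-setting} holds for every subset $Y\subset\Ram(D)$ of size $c$. Proposition~\ref{prop:quasi-compactness-of-Sht-D-mod-a}(\ref{prop:quasi-compactness-of-Sht-D-mod-a:no-restriction}) then delivers the desired finite-typeness.

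For the parenthetical assertion that the hypothesis is automatic when $d$ is prime, I would observe that the denominator $e_y$ of $\inv_y(D)$ divides the index $d$. If $d$ is prime, the only divisors are $1$ and $d$; but $y\in\Ram(D)$ exactly means $\inv_y(D)\neq 0$, which rules out $e_y=1$ and forces $e_y=d$, so $D_y$ is automatically a division algebra.

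I do not anticipate any substantive obstacle here, since the entire argument is a formal deduction from Proposition~\ref{prop:quasi-compactness-of-Sht-D-mod-a}. The only point requiring care is to ensure $c\geq 1$ before applying the assumption about invariants (the convention for the empty LCM would otherwise be ambiguous), and this is precisely what the strict inequality $|\Ram(D)|>|I|$ provides.
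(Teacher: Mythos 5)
Your proposal is correct and is exactly the argument the paper intends: the corollary is stated as an immediate consequence of Proposition~\ref{prop:quasi-compactness-of-Sht-D-mod-a}(\ref{prop:quasi-compactness-of-Sht-D-mod-a:no-restriction}), with the translation ``$D_y$ is a division algebra $\Leftrightarrow$ $\inv_y(D)$ has denominator exactly $d$'' guaranteeing Assumption~\ref{ass:quasi-compact-Sht-D-setting} for every non-empty $Y\subseteq\Ram(D)$, and $|\Ram(D)|>|I|$ giving $c>0$. Your treatment of the prime-index case and the care about $c\geq 1$ are likewise the standard (and correct) justifications.
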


   Our proof follows the same strategy as \cite[Proposition~1.11]{Lau:Degeneration}, and the additional hypothesis is needed to ensure the irreducibility (see Lemma~\ref{lem:irreducible-shtuka}). Note that Proposition~\ref{prop:quasi-compactness-of-Sht-D-mod-a}(\ref{prop:quasi-compactness-of-Sht-D-mod-a:no-restriction}) is \emph{not} an optimal quasi-compactness result; see Remark~\ref{rmk:quasi-compactness-optimality} for further discussions.

We now state a few lemmas needed for the proof of the proposition.
Given a vector bundle $\Ecal$ over $X\times k$, let $\mu^+(\Ecal)$ and $\mu^-(\Ecal)$ respectively denote the largest and smallest Harder--Narasimhan slopes. 

\begin{lem}[{\emph{Cf.}~\cite[\S4.2, Lemma]{Laumon:DrinfeldShtukas}}]\label{lem:bounding-HN-parameter-of-Sht-D}
  Let $I_{\bullet}$ and $\blambda\in (X_\ast(T)_+)^I$ be as in Definition~\ref{def:BD-Schubert}. Then there exists a constant $R > 0$ depending on $\blambda$ and $\Dcal$, such that the following property holds. For any $\Dcal^\times$-shtuka 
    \[
        (\Eucal,\alpha)\coloneqq ( (x_i)_{i\in I}, (\Ecal_j)_{j=0,\cdots, r}, (\varphi_j)_{j=1,\cdots, r} , \alpha ) \in \Sht^{\leqslant\blambda}_{\Dcal^\times,I_\bullet}(k)
    \]
    over an algebraically closed extension $k$ of $\FF_q$, if there exists a subbundle $0\subsetneq\Fcal\subsetneq \Ecal_0$ satisfying
    \[
        \mu^{-}(\Fcal) > \mu^+(\Ecal_0/\Fcal) + R,
    \]
    then $\Fcal$ is a $\Dcal_{X\times k}$-submodule of $\Ecal_0$ and there exists a $\Dcal$-stable subshtuka $(\Fucal,\alpha)$ of $(\Eucal,\alpha)$ such that $\Fcal_0=\Fcal$. (Here, we use the notation from Definition~\ref{def:D-subshtukas}.)
\end{lem}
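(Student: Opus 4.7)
The plan is to follow Laumon's classical approach for $\GL_n$-shtukas (\emph{cf.}~\cite[\S4.2]{Laumon:DrinfeldShtukas}), enhanced to enforce $\Dcal$-stability. The strategy is to first construct a candidate subshtuka by generic saturation, and then use the slope gap $R$ to force both $\Dcal$-stability of each $\Fcal_j$ and compatibility with Frobenius via a single Hom-vanishing argument.

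First I would define $\Fcal_j \subset \Ecal_j$ inductively for $j = 1, \ldots, r$ as the unique saturated sub-bundle whose generic fibre coincides with that of $\Fcal_{j-1}$ under the generic isomorphism induced by $\varphi_j$. Each such $\Fcal_j$ has the same rank as $\Fcal_0$, and $\varphi_j$ restricts to an elementary modification $\Fcal_{j-1} \dashrightarrow \Fcal_j$ away from the legs. The crux of the argument is then to establish a uniform slope-shift estimate: there exists a constant $C = C(\blambda, \Dcal) > 0$, independent of $(\Eucal,\alpha)$, such that
\[
\mu^-(\Fcal_j) \geq \mu^-(\Fcal_0) - C \quad \text{and} \quad \mu^+(\Ecal_j/\Fcal_j) \leq \mu^+(\Ecal_0/\Fcal_0) + C
\]
for all $j$. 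This is obtained by unwinding Definition~\ref{def:bdd-by-mu}: using a faithful representation $\rho\colon \Dcal^\times \hookrightarrow \SL(\Vcal)$ as in the proof of Proposition~\ref{prop:projectivity}, each local modification at a leg $x_i$ is dominated by the affine Schubert cell $\Scal^{\rho\lambda_i}$, whose translation length controls the local degree shift; summing over all legs yields the uniform bound on the HN-extrema.

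With this estimate in hand, I would choose $R$ large enough (depending only on $\blambda$ and $\Dcal$) so that for every $j$ the inequality $\mu^-(\Fcal_j \otimes_{\Ocal_X} \Dcal) > \mu^+(\Ecal_j/\Fcal_j)$ holds; concretely one needs $R - 2C > -\mu^-(\Dcal) + C'$, where $C'$ absorbs any characteristic-$p$ defect in the tensor-product slope bound. This forces $\Hom(\Fcal_j \otimes \Dcal,\, \Ecal_j/\Fcal_j) = 0$, so the right-multiplication map $\Fcal_j \otimes \Dcal \to \Ecal_j$ lands in $\Fcal_j$, i.e.\ each $\Fcal_j$ is $\Dcal$-stable. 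The same choice of $R$ guarantees $\Hom({}^\tau\Fcal_0,\, \Ecal_r/\Fcal_r) = 0$, since $\tau = \id_X \times \Frob_q$ preserves HN-data on $X$ and hence $\mu^-({}^\tau\Fcal_0) = \mu^-(\Fcal_0)$; thus $\alpha({}^\tau\Fcal_0) \subseteq \Fcal_r$, with the reverse inclusion following from equality of ranks together with the saturation of both sides.

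The main obstacle is establishing the uniform slope-shift estimate $C(\blambda, \Dcal)$: translating the abstract bound $\Gr^{\leqslant\blambda}_{\Dcal^\times, I_\bullet}$ into an explicit degree estimate requires a careful analysis of the BD Schubert cell via $\rho$ and the combinatorics of the coweights $\rho\lambda_i$. A secondary technical point is the tensor-product slope inequality $\mu^-(\Fcal_j \otimes \Dcal) \geq \mu^-(\Fcal_j) + \mu^-(\Dcal) - C'$ in positive characteristic, which can be handled either via Langer's theorem on strong Harder--Narasimhan filtrations or, more economically, by a rank-based crude bound absorbed into $R$.
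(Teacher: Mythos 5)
Your proof is correct and is essentially the paper's own argument: the paper proves this lemma simply by citing Laumon's \S4.2 Lemma together with Lau's observation (proof of \cite[Proposition~1.11]{Lau:Degeneration}) that the same slope-gap/Hom-vanishing argument extends to arbitrary $I_\bullet$ and $\blambda$, and that is exactly what you reconstruct, including the $\Dcal$-stability of each $\Fcal_j$ and the Frobenius compatibility via Hom-vanishing with constants depending only on $\blambda$ and $\Dcal$. The one step you leave sketchy, the uniform slope-shift estimate, is standard and is obtained most directly from the embedding $\Gr^{\leqslant\blambda}_{\Dcal^\times,I_\bullet}\subset\Gr^{\leqslant\blambda_{\GL}}_{\GL_{d^2},I_\bullet}$ of \eqref{eq:functoriality-BD-Gr}, which bounds the elementary divisors of each modification (also when legs collide or meet $\Ram(D)$), rather than passing through $\rho\colon\Dcal^\times\hookrightarrow\SL(\Vcal)$.
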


\begin{proof}
    In the special case where $I_\bullet = (\{1\},\{2\})$ and $\blambda = (\lambda_1,\lambda_2)$ with $\lambda_1 = (0,\cdots,0,-1)$ and $\lambda_2 = (1,0,\cdots,0)$, the lemma was proved in \cite[\S4.2, Lemma]{Laumon:DrinfeldShtukas}. However, as noted in the proof of Proposition~1.11 in \cite{Lau:Degeneration}, the same argument extends to any choice of $I_\bullet$ and $\blambda$. 
\end{proof}


\begin{lem}\label{lem:irreducible-shtuka}
Let  $(\Eucal,\alpha)\in\Sht^{\leqslant\blambda}_{\Dcal^\times,I_\bullet}(k)$ be a $\Dcal^{\times}$-shtuka over an algebraically closed extension $k$ of $\FF_{q}$. If there exists a non-empty finite subset $Y\subseteq \Ram(D)$, disjoint from the legs of $(\Eucal,\alpha)$, that satisfies Assumption~\ref{ass:quasi-compact-Sht-D-setting} for $D$, then $(\Eucal,\alpha)$ is \emph{irreducible} in the sense of Definition~\ref{def:D-subshtukas}.
\end{lem}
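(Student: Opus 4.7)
I would argue by contradiction. Suppose $(\Eucal,\alpha)$ admits a non-zero proper $\Dcal$-stable subshtuka $(\Fucal, \alpha)$, and set $r \coloneqq \mathrm{rk}_{\Ocal_{X \times k}} \Fcal_0$, so that $0 < r < d^2$ by Definition~\ref{def:D-subshtukas}. The plan is to extract from each $y \in Y$ a divisibility condition on $r$ and then combine them via Assumption~\ref{ass:quasi-compact-Sht-D-setting} to force $d^2 \mid r$, which contradicts $r < d^2$.

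To produce the local condition at $y \in Y$, I would fix a geometric point $\bar y$ of $X \times k$ above $y$. Because $Y$ is disjoint from the legs, every $\varphi_j$ restricts to an isomorphism on the formal neighbourhood $\widehat{\Gamma}_{\bar y}$, so composing them with $\alpha$ produces a $\sigma$-semilinear automorphism of the $\Dcal_y$-lattice $\Ecal_0|_{\widehat{\Gamma}_{\bar y}}$ that preserves the sub-$\Dcal_y$-lattice $\Fcal_0|_{\widehat{\Gamma}_{\bar y}}$. Passing to the generic fibre yields a rank-one free $D_y \otimes_{F_y} \breve F_y$-module $V_{\bar y}$ carrying the structure of a $D_y$-isocrystal, together with a non-zero proper $D_y$-sub-isocrystal $W_{\bar y} \coloneqq \Fcal_0 \otimes_{\Ocal_X} \breve F_y \subset V_{\bar y}$ of $\breve F_y$-dimension $r$.

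The crux is a slope--Morita count. Write $\inv_y(D) = t_y/s_y$ in lowest terms and $D_y \cong M_{d/s_y}(\Delta_y)$ for the division algebra $\Delta_y$ of invariant $t_y/s_y$. From \S\ref{sec-isosht} I would take the fact that any rank-one $D_y$-isocrystal is isoclinic of slope $\inv_y(D)$. Since $\Delta_y \otimes_{F_y} \breve F_y \cong M_{s_y}(\breve F_y)$ splits, successive Morita equivalences $D_y\text{-Mod} \simeq \Delta_y\text{-Mod}$ and $M_{s_y}(\breve F_y)\text{-Mod} \simeq \breve F_y\text{-Mod}$ identify the $D_y$-isocrystal $V_{\bar y}$ with the $\breve F_y$-isocrystal $\breve F_y^d$ isoclinic of slope $t_y/s_y$, and identify sub-$D_y$-isocrystals of $V_{\bar y}$ with sub-$\breve F_y$-isocrystals of $\breve F_y^d$. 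By Dieudonn\'e--Manin, a sub-isocrystal of $\breve F_y^d$ has $\breve F_y$-dimension divisible by $s_y$, and tracing this back through the two Morita equivalences (whose composition scales $\breve F_y$-dimensions by a factor of $d$) converts this into $s_y d \mid r$. Applying this at each $y \in Y$ gives $r$ divisible by $\lcm_{y \in Y}(s_y d) = d \cdot \lcm_{y \in Y}(s_y) = d^2$, the last equality being Assumption~\ref{ass:quasi-compact-Sht-D-setting}, contradicting $r < d^2$.

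The main obstacle is the isoclinicity input invoked at the start of the third paragraph, namely that a rank-one $D_y$-isocrystal (for $y$ disjoint from the legs) has its unique Newton slope equal to $\inv_y(D)$. This is part of the classification of $(D,\varphi)$-spaces and Dieudonn\'e $D_x$-modules developed in \S\ref{sec-isosht} (\emph{cf.}~Lemma~\ref{lemma-Lau-bound-for-degree-of-D-varphi-space-outside-split-places} and its surroundings), which I would invoke as a black box; once this is in hand, the Morita bookkeeping producing $s_y d \mid r$ is routine.
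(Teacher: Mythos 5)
There is a genuine gap, and it sits exactly at the step you flag as the crux: the assertion that a rank-one $D_y$-isocrystal is isoclinic of slope $\inv_y(D)$ is false, and it is not what \S\ref{sec-isosht} provides. Since $Y$ is disjoint from the legs, all modifications $\varphi_j$ and $\alpha$ are isomorphisms near $y$, so the semilinear automorphism you construct preserves the full $\Dcal_y$-lattice $\Ecal_{0}|_{\widehat\Gamma_{\bar y}}$; the resulting Dieudonn\'e $D_y$-module is therefore \emph{\'etale}, i.e.\ isoclinic of slope $0$ (the model case being $D_y\otimes_{F_y}\breve F_y$ with $\varphi=1\otimes\tau_y$, which is rank one over $\breve D_y$ and has slope $0$, not $\inv_y(D)$). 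Lemma~\ref{lemma-Lau-bound-for-degree-of-D-varphi-space-outside-split-places} is a global congruence for simple $(D,\varphi)$-spaces and does not yield your local isoclinicity claim. With the correct slope $0$, your Morita-plus-Dieudonn\'e--Manin count collapses: after splitting $D_y\otimes_{F_y}\breve F_y\cong M_d(\breve F_y)$ you are looking at sub-isocrystals of a slope-$0$ isocrystal, whose dimensions are unconstrained, so the argument only gives $d\mid r$ and never $s_yd\mid r$. More structurally, no argument carried out purely over $\breve F_y$ can see the invariant $\inv_y(D)$, because $D\otimes_F\breve F_y$ splits.

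The divisibility $d\,e_y\mid r$ is nevertheless true, but the mechanism is descent to $F_y$ rather than slopes: this is how the paper argues. One keeps the integral structure (the \'etale local shtuka $(\Ecal_{0,\cl y},\varphi_{\cl y})$ and its $\varphi_{\cl y}$-stable submodule $\Fcal_{0,\cl y}$) and applies \cite[Proposition~3.4]{ArastehRad-Hartl:LocGlShtuka}: the $\varphi_{\cl y}$-invariants form full $\Ocal_y$-lattices, with $\rank_{\Ocal_y}(\Ecal_{0,\cl y})^{\varphi_{\cl y}=1}=\rank\Ecal_0$ and likewise for $\Fcal_0$. Then $(\Ecal_{0,\cl y})^{\varphi_{\cl y}=1}\otimes_{\Ocal_y}F_y$ is a rank-one free right $D_y$-module over the \emph{non-split} algebra $D_y$, and the invariants of $\Fcal_0$ give a $D_y$-submodule; since the simple right $D_y$-module has $F_y$-dimension $d\,e_y$, one gets $d\,e_y\mid r$, and Assumption~\ref{ass:quasi-compact-Sht-D-setting} then forces $d^2\mid r$, the contradiction you were aiming for. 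So your overall contradiction-and-lcm skeleton matches the paper, but the local input must be replaced by this invariants/descent argument; as written, the proposal does not go through.
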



\begin{proof}

   Given a place $y\in |X|$ disjoint from the legs $(x_i)_{i\in I}$ of $(\Eucal,\alpha)$, let us recall the construction of \'etale local shtuka at $y$ associated to $(\Eucal,\alpha)$ obtained via the \emph{global-local functor} at $y$ (\emph{cf.} \cite[\S5]{ArastehRad-Hartl:LocGlShtuka}). Choose a $k$-point $\cl y\in X(k)$ lying over $y$ and write $\Ocal_{\cl y}$ for the completed stalk of $\Ocal_{X\times k}$ at $\cl y$. We let $\tau_y\colon \Ocal_{\cl y}\to\Ocal_{\cl y}$ denote the map induced by $\tau^{\deg(y)}$. Then we get an isomorphism of right $(\Dcal_{y}\otimes k)$-modules
    \[
    (\varphi_1)^{-1}_{\cl y}\circ\cdots\circ (\varphi_r)^{-1}_{\cl y} \circ \alpha_{\cl y} \colon \xymatrix@1{\ltau (\Ecal_0\otimes_{\Ocal_X} \Ocal_{\cl y}) \ar[r] & \Ecal_0\otimes_{\Ocal_X} \Ocal_{\cl y}},
  \] 
  inducing a $\tau$-linear endomorphism $\varphi_{y}$ of $\Ecal_0\otimes_{\Ocal_X} \Ocal_{\cl y}$. Furthermore, $\varphi_y^{\deg(y)}$ preserves the direct factor $\Ecal_{0,\cl y}$ given by the completed stalk of $\Ecal_0$ at $\cl y$, so we let $\varphi_{\cl y}\coloneqq \varphi_y^{\deg(y)}|_{\Ecal_{0,\cl y}}$ denote the $\tau_y$-linear endomorphism of $\Ecal_{0,\cl y}$. Clearly, the $\Ocal_{\cl y}$-linearisation of $\varphi_{\cl y}$ is an isomorphism, so $(\Ecal_{0,\cl y},\varphi_{\cl y})$ defines a rank-$d^2$ \'etale local shtuka over $k$ in the sense of \cite[Definition~3.1]{ArastehRad-Hartl:LocGlShtuka}. Furthermore, $\varphi_{\cl y}$ commutes with the natural right action of $\Dcal_y$ on $\Ecal_{0,\cl y}$.

  Let $(\Fucal,\alpha)$ be a non-zero $\Dcal$-stable subshtuka of $(\Eucal,\alpha)$. Then clearly, the completed stalk $\Fcal_{0,\cl y}$ of $\Fcal_0$ at $\cl y$ is a $\varphi_{\cl y}$-stable $\Dcal_y\otimes_{\Ocal_y}\Ocal_{\cl y}$-submodule, so in particular $(\Fcal_{0,\cl y},\varphi_{\cl y})$ defines an \'etale local subshtuka of $(\Ecal_{0,\cl y},\varphi_{\cl y})$.

  It now follows from \cite[Proposition~3.4]{ArastehRad-Hartl:LocGlShtuka} that we have 
  \[
  \rank_{\Ocal_y}(\Ecal_{0,\cl y})^{\varphi_{\cl y}=1} = \rank \Ecal_0 \quad \text{and}\quad \rank_{\Ocal_y}(\Fcal_{0,\cl y})^{\varphi_{\cl y}=1} = \rank \Fcal_0 ,\]
  Furthermore, $(\Ecal_{0,\cl y})^{\varphi_{\cl y}=1}\otimes_{\Ocal_y}F_y$ is a rank-$1$ free right $D_y$-module and $(\Fcal_{0,\cl y})^{\varphi_{\cl y}=1}\otimes_{\Ocal_y}F_y$ is its $D_y$-submodule. Therefore, from the description of the simple $D_y$-module we see that $\rank\Fcal_0$ is divisible by $d\cdot e_y$ where $e_y$ is the torsion order of $\inv_y(D)$.

  Now if we choose $Y\subseteq \Ram(D)$ as in the statement, then it follows from the previous paragraph that  $\rank\Fcal_0$ is divisible by $d^2 = \lcm \{d\cdot e_{y}\}_{y\in Y}$. Since $\Fcal_0$ is non-zero,  the rank consideration forces $(\Fucal,\alpha) = (\Eucal,\alpha)$, as desired.
%
%
\end{proof}

\begin{proof}[Proof of Proposition~\ref{prop:quasi-compactness-of-Sht-D-mod-a}]
Note that (\ref{prop:quasi-compactness-of-Sht-D-mod-a:leg-restriction}) implies (\ref{prop:quasi-compactness-of-Sht-D-mod-a:no-restriction}), since we have 
\[\Sht^{\leqslant\blambda}_{\Dcal^\times,I_\bullet}/a^\ZZ=\bigcup(\Sht^{\leqslant\blambda}_{\Dcal^\times,I_\bullet}/a^\ZZ)|_{(X\setminus Y)^I},\] 
where the union is over all the (finitely many) choices of $Y \subseteq \Ram(D)$ with size $c$ for some fixed $1\leq c \leq|\Ram(D)|$. In the setting of  (\ref{prop:quasi-compactness-of-Sht-D-mod-a:no-restriction}), we may set $ c \coloneqq |\Ram(D)| - |I|$. 

It remains to prove (\ref{prop:quasi-compactness-of-Sht-D-mod-a:leg-restriction}).
We identify $\Bun_{\Dcal^\times}(S)$ as the groupoid of rank-$1$ locally free right $\Dcal_{X\times S}$-modules, and $\Bun_{\GL_{d^2}}(S)$ as the groupoid of rank-$d^2$ vector bundles over $X\times S$. Then the forgetful map $\Bun_{\Dcal^\times} \to \Bun_{\GL_{d^2}}$ is representable by a quasi-affine morphism of finite type \cite[\S{I.2},~Lemme~2]{LafforgueL:Ramanujan}. Therefore, it suffices to show that the composition of finite-type morphisms
\[
\begin{tikzcd}
    (\Sht^{\leqslant\blambda}_{\Dcal^\times,I_\bullet}/a^\ZZ)|_{(X\setminus Y)^I} \arrow[r, "\pr_0"] &
    \Bun_{\Dcal^\times}/a^\ZZ \arrow{r} & \Bun_{\GL_{d^2}}/a^\ZZ
\end{tikzcd}
\]
factors through a quasi-compact open substack of $\Bun_{\GL_{d^2}}/a^\ZZ$, where $Y\subseteq \Ram(D)$ is a subset satisfying Assumption~\ref{ass:quasi-compact-Sht-D-setting} for $D$. Or equivalently, for any integer $0\leq e < d\deg(a)$ the image of $(\Sht^{\leqslant\blambda, \deg=e}_{\Dcal^\times,I_\bullet})|_{(X\setminus Y)^I}$ factors through some quasi-compact open substack of $\Bun_{\GL_{d^2}}^{\deg = e}$. 

Let $(\Eucal,\alpha)\in\Sht^{\leqslant\blambda,\deg = e}_{\Dcal^\times,I_\bullet}(k)$ be a $\Dcal^\times$-shtuka over an algebraically closed extension $k$ of $\FF_q$, and let $\mu_1(\Ecal_0)>\cdots>\,\mu_t(\Ecal_0)$ be the Harder--Narasimhan slopes for the underlying vector bundle of $\Ecal_0$. If we have $\mu_i(\Ecal_0)-\mu_{i+1}(\Ecal_0)>R$ for some $i$ (where $R$ is the constant defined in Lemma~\ref{lem:bounding-HN-parameter-of-Sht-D}), then Lemma~\ref{lem:bounding-HN-parameter-of-Sht-D} implies that the $i$th Harder--Narasimhan filtration defines a $\Dcal$-stable subshtuka of $(\Eucal,\alpha)$. Therefore, if the legs of $(\Eucal,\alpha)$ is disjoint from $Y\subset\Ram(D)$ satisfying Assumption~\ref{ass:quasi-compact-Sht-D-setting}, then the gaps between consecutive Harder--Narasimhan slopes of $\Ecal_0$ are bounded by $R$ by irreducibility of $(\Eucal,\alpha)$ (\emph{cf.} Lemma~\ref{lem:irreducible-shtuka}). In particular, the image of $\Ecal_0$ in $\Bun_{\GL_{d^2}}^{\deg = e}$ lies in some \emph{Harder--Narasimhan truncation}, which is a quasi-compact open substack of $\Bun_{\GL_{d^2}}^{\deg = e}$; \emph{cf.} \cite[Theorem~2.1]{Schieder:HNstr}.
\end{proof}
\begin{rmk}\label{rmk:quasi-compactness-optimality}
    Applying Proposition~\ref{prop:quasi-compactness-of-Sht-D-mod-a}(\ref{prop:quasi-compactness-of-Sht-D-mod-a:leg-restriction}) with $Y = \Ram(D)$, we get an alternative proof of the quasi-compactness of $(\Sht^{\leqslant\blambda}_{\Dcal^\times,I_\bullet}/a^\ZZ)|_{(X\setminus\Ram(D)))^I}$ \cite[Proposition~1.11]{Lau:Degeneration}. The proof in \emph{loc.~cit.} is based on the special case of Lemma~\ref{lem:irreducible-shtuka} (namely, \cite[\S{I.3},~Proposition~7]{LafforgueL:Ramanujan}), which was deduced from a \emph{global} Weil descent result \cite[\S{I.3},~Lemme~3]{LafforgueL:Ramanujan}. In contrast, our proof of Lemma~\ref{lem:irreducible-shtuka} is based on the analogue of \emph{loc.~cit.} for \'etale local shtukas -- namely, \cite[Proposition~3.4]{ArastehRad-Hartl:LocGlShtuka} -- thus it yields more general irreducibility and quasi-compactness results.

    That said, Lemma~\ref{lem:irreducible-shtuka} is not the optimal irreducibility result. In fact, in the proof of Theorem~\ref{thm:from-inequality-to-properness}, we show irreducibility for $\Dcal^\times$-shtukas over $\cl\FF_p$ under a different condition from Lemma~\ref{lem:irreducible-shtuka}, and thereby obtain a quasi-compactness result for $\Sht^{\leqslant\blambda}_{\Dcal^\times,I_\bullet}/a^\ZZ$ that does not seem to be covered by Proposition~\ref{prop:quasi-compactness-of-Sht-D-mod-a}.
\end{rmk}

\section{Kottwitz--Rapoport and Newton stratifications}\label{sec-stratifications}
We digress to review the Newton stratification for the moduli of $\Gcal$-shtukas and establish Mazur's inequality  under a mild assumption (Proposition~\ref{prop:Adm}). We also discuss the Kottwitz--Rapoport stratification and prove non-emptiness results for both stratifications when $G$ is an inner form of $\GL_d$ (Corollaries~\ref{cor:nonempty-KR}, \ref{cor:nonempty-Newton}), which are of independent interest. This in particular shows that each fibre of the leg morphism $\wp\colon\Sht^{\leqslant\blambda}_{\Gcal,I\bullet}\to X^I$ is non-empty if $\Sht^{\leqslant\blambda}_{\Gcal,I\bullet}\ne\emptyset$ and $\Gcal$ is a parahoric integral model of an inner form of $\GL_d$.
Although these results are not used directly in the proof of our main theorem (Theorem~\ref{thm:from-inequality-to-properness}), we note a parallel between the proofs of Proposition~\ref{prop:Adm} and Lemma~\ref{lem:adm-lattice}, suggesting that Lemma~\ref{lem:adm-lattice} may be viewed as a ``degeneration'' of Mazur's inequality.

\subsection{``Special fibres'' of Beilinson--Drinfeld Schubert varieties}\label{ssec:fixed-legs}
    If a geometric point $\cl x\in X(\cl\FF_q)$ lies over a closed point $x\in X$, then we write $\cl x \mid x$.

    Let $Y\subset X$ be a finite set of closed points. We choose a pairwise disjoint non-empty subset $I_y\subset I$ for each $y\in Y$, and partition each $I_y$ as $I_y = \bigsqcup_{\cl y \mid y}I_{\cl y}$, where $I_{\cl y}$ is allowed to be empty. Given such $(I_{\cl y})_{\cl y\mid y\in Y}$, we obtain 
    \begin{equation}\label{eq:ybf}
        \cl \ybf \coloneqq (x_i)_{i\in \bigsqcup_{y\in Y}I_y} \in X(\cl\FF_q)^{\bigsqcup_{y\in Y}I_y},\qquad \text{where }x_i = \cl y \text{ if }i\in I_{\cl y}.
    \end{equation}

    Set $J\coloneqq I\setminus \big(\bigsqcup_{y\in Y}I_y\big)$, and write $(X\setminus Y)^J\times \cl\ybf$ for the following scheme over $X^I$
    \begin{equation}
        (X\setminus Y)^J\times \cl\ybf \coloneqq\begin{tikzcd}[column sep = large]
         (X\setminus Y)^J\times\Spec\cl\FF_q \arrow[r, "{(\incl,\cl\ybf)}" ] & X^J \times  X^{\bigsqcup_{y\in Y}I_y} \cong  X^I
    \end{tikzcd}.
    \end{equation}
    Now, define $\Gr_{\Gcal,I_\bullet,\cl\ybf}^{\leqslant\blambda}$ by the following $2$-cartesian diagram
    \begin{equation}\label{eq:Gr-ybf}
        \begin{tikzcd}
            \Gr_{\Gcal,I_\bullet,\cl\ybf}^{\leqslant\blambda} \arrow[r] \arrow[d]& \Gr_{\Gcal,I_\bullet}^{\leqslant\blambda} \arrow[d] \\
            (X\setminus Y)^{J}
            \times\cl\ybf \arrow[r] & X^I
        \end{tikzcd},
    \end{equation}
    and similarly define $\Sht^{\leqslant\blambda}_{\Gcal,I_\bullet,\cl\ybf}$ and  $\Hck^{\leqslant\blambda}_{\Gcal,I_\bullet,\cl\ybf}$.

    Given $I_\bullet$, let $I_{\cl y,\bullet}$ and $J_\bullet$ be the partitions induced by $I_\bullet$; namely, $I_{\cl y,\bullet}$ is the partition defined by $I_{\cl y}\cap I_j$, skipping the index $j$ if $I_{\cl y}\cap I_j = \emptyset$. We also define $\blambda_J \in X_\ast(T)^J$ and $\blambda_{\cl y}\in X_\ast(T)^{I_{\cl y}}$ to be the sub-tuples of $\blambda$ corresponding to $J$ and $I_{\cl y}$, respectively.
    Then we can decompose $\Gr^{\leqslant\blambda}_{\Gcal,I_\bullet,\cl\ybf}$ and $\Lcal^+_{X^I}\Gcal|_{(X\setminus Y)^J\times\cl\ybf}$ as follows:
    \begin{align}
        \label{eq:Gr-fixed-legs}
        \Gr^{\leqslant\blambda}_{\Gcal,I_\bullet,\cl\ybf} &\cong  \Gr^{\leqslant\blambda_J}_{\Gcal,J_\bullet} \times \prod_{\cl y \text{ s.t. } I_{\cl y}\ne\emptyset}(\Gr^{\leqslant \blambda_{\cl y}}_{\Gcal,I_{\cl y,\bullet}})_{\Delta(\cl y)}, \quad\text{and}\\
        \label{eq:loop-gp-fixed-legs}
        \Lcal^+_{X^I}\Gcal|_{(X\setminus Y)^J\times\cl\ybf} & \cong  \Lcal^+_{X^J}\Gcal|_{(X\setminus Y)^J} \times \prod_{\cl y \text{ s.t. } I_{\cl y}\ne\emptyset} (L^+_y\Gcal)_{\cl\FF_q},
    \end{align}
    where $\Delta(\cl y)\coloneqq (\cl y,\cdots,\cl y)\in X(\cl\FF_q)^{I_{\cl y}}$. 
    In particular, the pro-smooth morphism of stacks $\Sht^{\leqslant\blambda}_{\Gcal, I_\bullet} \to [\Lcal^+_{X^I}\Gcal \backslash \Gr^{\leqslant\blambda}_{\Gcal,I_\bullet}]$ (\emph{cf.} \cite[Proposition~3.4.2]{Bieker:Compactification}) restricts to 
    \begin{equation}\label{eq:loc-mod}
     \begin{tikzcd}[column sep = tiny]
       {\Sht^{\leqslant\blambda}_{\Gcal, I_\bullet,\cl \ybf}} \arrow[r] & {\left.\big[\Lcal^+_{X^J}\Gcal \backslash \Gr^{\leqslant\blambda}_{\Gcal,J_\bullet} \big]\right|_{(X\setminus Y)^J} \times \prod\limits_{\cl y \text{ s.t. } I_{\cl y}\ne\emptyset} \big[L^+_y\Gcal\backslash (\Gr^{\leqslant \blambda_{\cl y}}_{\Gcal,I_{\cl y,\bullet}})_{\Delta(\cl y)}\big].}
   \end{tikzcd}
    \end{equation}
    Therefore, to analyse $\Gr^{\leqslant\blambda}_{\Gcal,I_\bullet,\cl\ybf}$, we may focus on each individual $(\Gr^{\leqslant \blambda_{\cl y}}_{\Gcal,I_{\cl y,\bullet}})_{\Delta(\cl y)}$ endowed with $L^+_y\Gcal$-action.
    
    Suppose that $I = I_{\cl y}$ and $\cl\ybf=\Delta(\cl y)$ for the moment.
    If $|I|=1$, then we have 
    \begin{equation}
         (\Gr^{\leqslant \lambda}_{\Gcal,X})_{\cl y}\subset(\Gr_{\Gcal,X})_{\cl y} \cong (L_y G/L^+_y\Gcal)_{\cl\FF_q}.
    \end{equation}
    Let us make its underlying reduced scheme more explicitly.  
    By the Iwahori--Cartan decomposition (\emph{cf.} \cite[Proposition~8]{HainesRapoport:Parahoric}), we have
    \begin{equation}\label{eq:Iwahori-Cartan}
        \Gcal(\Ocal_{\cl y}) \backslash G(F_{\cl y}) / \Gcal(\Ocal_{\cl y}) \cong W_{\Gcal_{\cl y}} \backslash \widetilde W_{G_{\cl y}} /W_{\Gcal_{\cl y}} ,
    \end{equation}
    where $\widetilde W_{G_{\cl y}}$ is the Iwahori--Weyl group for $G_{\cl y}\coloneqq G_{F_{\cl y}}$ and $W_{\Gcal_{\cl y}}\subset \widetilde W_{G_{\cl y}}$ is the finite subgroup corresponding to  $\Gcal_{\cl y}\coloneqq \Gcal_{\Ocal_{\cl y}}$. For $\widetilde w\in \widetilde W_{G_{\cl y}}$, we let $\Gcal(\Ocal_{\cl y})\widetilde w\Gcal(\Ocal_{\cl y})$ denote the double coset corresponding to the $W_{\Gcal_{\cl y}}$-double coset of $\widetilde w$ via \eqref{eq:Iwahori-Cartan}. Then, by the description of the reduced fibre $(\Gr^{\leqslant\lambda}_{\Gcal,X})_{\cl y,\red}$ (\emph{cf.} \cite[Theorem~6.12]{HainesRicharz:TestFtnParahoric}, which is valid for any $G_y$), we have
     \begin{equation}\label{eq:gl-Sch-var}
        (\Gr^{\leqslant\lambda}_{\Gcal,X})_{\cl y}(\cl\FF_q)=\bigcup_{\widetilde w \in \Adm_{\Gcal,\cl y}(\lambda)} \Gcal(\Ocal_{\cl y}) \widetilde w^{-1} \Gcal(\Ocal_{\cl y})/\Gcal(\Ocal_{\cl y});
    \end{equation}
    where $\Adm_{\Gcal,\cl y}(\lambda)\subset W_{\Gcal_{\cl y}} \backslash \widetilde W_{G_{\cl y}} /W_{\Gcal_{\cl y}}$ is the double coset of  $\lambda$-admissible elements \cite[(3.6)]{KottwitzRapoport:Existence}. (On the right side of \eqref{eq:gl-Sch-var}, we take the double coset corresponding to $\widetilde w^{-1}$ for $\widetilde w\in\Adm_{\Gcal,\cl y}(\lambda)$, due to our convention on $\Hck_{\Gcal, X}$ and $\Gr^{\leqslant\lambda}_{\Gcal,X}$; \emph{cf.} \eqref{eq:t-lambda}. Though $\widetilde w^{-1}\in\Adm_{\Gcal,\cl y}(-\lambda)$, we would like $\Adm_{\Gcal,\cl y}(\lambda)$ to be the index set; see Proposition~\ref{prop:Adm} and its proof to justify this sign convention.)

    More generally, if $|I| \geq 1$ and $I_\bullet = (I_j)_{j=1}^r$. Then  we have 
    \begin{equation}\label{eq:convol-prod-unbdd}
       \Gr_{\Gcal,I_\bullet,\Delta(\cl y)} \cong (\underbrace{L_yG\times^{L^+_y\Gcal}  \cdots \times^{L^+_y\Gcal}   L_yG \times^{L^+_y\Gcal} L_yG}_{r \text{ times} } /L^+_yG )_{\cl\FF_q}.
    \end{equation}
    To see this, by definition we have $\Gr_{\Gcal,(I),\Delta(\cl y)} \cong (L_yG/L^+_y\Gcal)_{\cl\FF_q}$, and now we iterate the \emph{convolution products} of $(\Gr_{\Gcal,X})_{\cl y}$; see \cite[Proof of Lemma~6.3]{Zhu:CoherenceConj} for the case when $r=2$. If $I_\bullet = (\{1\},\cdots,\{r\})$, then we can deduce the following description of $(\Gr_{\Gcal,I_\bullet}^{\leqslant\blambda})_{\Delta(\cl y)}$ from  \eqref{eq:gl-Sch-var} and \eqref{eq:convol-prod-unbdd}:
    \begin{equation}\label{eq:convol-prod-bdd}
       (\Gr_{\Gcal,I_\bullet}^{\leqslant\blambda})_{\Delta(\cl y)} \cong (\widetilde\Gr^{\leqslant\lambda_r}_{\Gcal,X})_{\cl y} \times^{L^+_y\Gcal}  \cdots \times^{L^+_y\Gcal}   (\widetilde\Gr^{\leqslant\lambda_2}_{\Gcal,X})_{\cl y} \times^{L^+_y\Gcal} (\Gr^{\leqslant\lambda_1}_{\Gcal,X})_{\cl y},
    \end{equation}
    where $(\widetilde\Gr^{\leqslant\lambda_i}_{\Gcal,X})_{\cl y}\subset L_yG$ is the preimage of $(\Gr^{\leqslant\lambda_i}_{\Gcal,X})_{\cl y}$. Note that we have
    \begin{equation}\label{eq:gl-Sch-var-lift}
        (\widetilde\Gr^{\leqslant\lambda_i}_{\Gcal,X})_{\cl y}(\cl\FF_q) = \bigcup_{\widetilde w \in \Adm_{\Gcal,\cl y}(\lambda_i)} \Gcal(\Ocal_{\cl y}) \widetilde w^{-1} \Gcal(\Ocal_{\cl y}) \subset G(F_{\cl y}).
    \end{equation}
    This description can be generalised for any $I_\bullet$ as follows.
    \begin{prop}\label{prop:Adm-additive}
        Choose $I_\bullet = (I_j)_{j=1}^r$, and let $I'_\bullet$ denote the refinement of $I_\bullet$ into singletons.
        For each $j=1,\cdots,r$, set $\lambda^{(j)}\coloneqq\sum_{i\in I_j}\lambda_i$. \emph{Suppose} that $G_y$ splits over a tame extension of $F_y$. Then the underlying reduced scheme of $(\Gr_{\Gcal,I_\bullet}^{\leqslant\blambda})_{\Delta(\cl y)}$ coincides with the reduced image of $(\Gr_{\Gcal,I'_\bullet}^{\leqslant\blambda})_{\Delta(\cl y)}$ under the ``joining modifications'' map (\emph{cf.} Corollary~\ref{cor:joining-modifications}), which in turn coincides with the underlying reduced scheme of
        \[(\widetilde\Gr^{\leqslant\lambda^{(r)}}_{\Gcal,X})_{\cl y} \times^{L^+_y\Gcal}  \cdots \times^{L^+_y\Gcal}   (\widetilde\Gr^{\leqslant\lambda^{(2)}}_{\Gcal,X})_{\cl y} \times^{L^+_y\Gcal} (\Gr^{\leqslant\lambda^{(1)}}_{\Gcal,X})_{\cl y}.\]
    \end{prop}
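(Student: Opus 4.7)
The plan is to prove the two equalities separately. For the first equality, Proposition~\ref{prop:projectivity} states that the joining-modifications map
\[\Hck^{\leqslant\blambda}_{\Gcal,I'_\bullet} \to \Hck^{\leqslant\blambda}_{\Gcal,I_\bullet}\]
is a projective modification, hence proper and birational, and therefore surjective on underlying topological spaces (the image is closed by properness and contains a dense open by birationality). Surjectivity of proper morphisms is preserved under base change, so the restriction to the fibre at $\Delta(\cl y)\in X^I$ remains surjective; transferring this to the BD-Grassmannian side via the pro-smooth $\Gr$-fibration structure \eqref{eq:lifting-to-Grass} of $\pr_0$ yields the first equality. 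Notably this step does not invoke the tameness hypothesis.

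For the second equality, I would proceed by induction on the sizes of the blocks, joining two adjacent modifications at a time. By this reduction it suffices to treat the base case of joining two modifications with dominant coweights $\mu,\mu' \in X_\ast(T)_+$ and both legs at $\cl y$, and to show that the reduced image in $L_yG/L^+_y\Gcal$ equals $(\widetilde\Gr^{\leqslant \mu+\mu'}_{\Gcal,X})_{\cl y}$. Iterating within each $I_j$ yields the block bound $\lambda^{(j)}$, and concatenating across $j$ recovers the full twisted product in the statement. Combining \eqref{eq:convol-prod-bdd} with \eqref{eq:gl-Sch-var-lift}, the joining-modifications map is identified with the loop-group multiplication
\[m_{\mu,\mu'}\colon (\widetilde\Gr^{\leqslant \mu'}_{\Gcal,X})_{\cl y} \times^{L^+_y\Gcal} (\widetilde\Gr^{\leqslant \mu}_{\Gcal,X})_{\cl y} \longrightarrow L_y G/L^+_y\Gcal.\]
Containment of the image in $(\widetilde\Gr^{\leqslant \mu+\mu'}_{\Gcal,X})_{\cl y}$ follows by a degeneration argument from the generic locus $W_{\{i,i'\}}$ of separated legs: there the bound factors as $\Scal^{\mu}\times\Scal^{\mu'}$ by \eqref{eq:splitting-UI}, and its closure inside $\Gr^{\leqslant(\mu,\mu')}_{\Gcal,(\{i,i'\})}$, combined with the $\Lcal^+_{X^{\{i,i'\}}}\Gcal$-stability of the BD Schubert variety, forces the image at $\Delta(\cl y)$ into the closed subscheme cut out by the $(\mu+\mu')$-admissible double cosets.

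The main obstacle is the reverse inclusion, namely that $m_{\mu,\mu'}$ is \emph{surjective} onto the underlying reduced scheme of $(\widetilde\Gr^{\leqslant \mu+\mu'}_{\Gcal,X})_{\cl y}$. Via \eqref{eq:gl-Sch-var-lift} this amounts to a convolution-additivity statement in the Iwahori--Weyl group double cosets: after taking Zariski closures of the Schubert double cosets, the product of the double cosets indexed by $\Adm_{\Gcal,\cl y}(\mu)$ and $\Adm_{\Gcal,\cl y}(\mu')$ must cover every double coset indexed by $\Adm_{\Gcal,\cl y}(\mu+\mu')$. This is the precise point at which the hypothesis that $G_y$ splits over a tame extension of $F_y$ is essential: for tame parahoric groups, this convolution-additivity of Schubert varieties is available through the coherence conjecture of Zhu \cite{Zhu:CoherenceConj} and its parahoric refinements (as used in \cite{HainesRicharz:TestFtnParahoric}). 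After matching the $\widetilde w \leftrightarrow \widetilde w^{-1}$ convention of \eqref{eq:gl-Sch-var}, these inputs provide the required surjectivity and complete the proof.
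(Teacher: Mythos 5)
Your proposal is correct and takes essentially the same route as the paper: the paper also gets the first identification from surjectivity of the joining-modifications map (a projective modification, no tameness needed) and reduces the substantive second identification to the two-leg case $I_\bullet=(\{1,2\})$, $I'_\bullet=(\{1\},\{2\})$, where the convolution-additivity of admissible sets is imported, under the tameness hypothesis, from the proof of Proposition~6.4 in \cite{Zhu:CoherenceConj} as noted in the proof of Theorem~5.1 of \cite{He:KottwitzRapoportConj}. The only difference is cosmetic: the paper cites Zhu and He for the entire two-leg statement rather than splitting it, as you do, into a containment step and the harder surjectivity step.
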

    \begin{proof}
        Since the ``joining modification'' map $\Gr^{\leqslant\blambda}_{\Gcal,I'_\bullet}\to\Gr^{\leqslant\blambda}_{\Gcal,I_\bullet}$ is surjective, it suffices to compare  $(\Gr_{\Gcal,I_\bullet}^{\leqslant\blambda})_{\Delta(\cl y),\red}$ and the reduced image of $(\Gr_{\Gcal,I'_\bullet}^{\leqslant\blambda})_{\Delta(\cl y)}$ in $(\Gr_{\Gcal,I_\bullet})_{\Delta(\cl y)}$. To show this, it suffices to handle the case where  $I_\bullet = (\{1,2\})$ and $I'_\bullet = (\{1\},\{2\})$, which is implicitly done in the proof of Proposition~6.4 in \cite{Zhu:CoherenceConj} when $G_y$ splits over a tame extension of $F_y$, as noted in \cite[Proof of Theorem~5.1]{He:KottwitzRapoportConj}.
    \end{proof}
    
   \begin{rmk}
        If $|I|=2$, Proposition~\ref{prop:Adm-additive} reduces to the following equality
        \begin{equation}\label{eq:Adm-additive}
        \bigcup_{\substack{\widetilde w_1\in\Adm_{\Gcal,\cl y}(\lambda_1)\\\widetilde w_2\in\Adm_{\Gcal,\cl y}(\lambda_2)}}\frac{\Gcal(\Ocal_{\cl y}) \widetilde w_1^{-1} \Gcal(\Ocal_{\cl y})\widetilde w_2^{-1} \Gcal(\Ocal_{\cl y})}{\Gcal(\Ocal_{\cl y})} =  \bigcup_{\widetilde w \in \Adm_{\Gcal,\cl y}(\lambda)} \frac{\Gcal(\Ocal_{\cl y}) \widetilde w^{-1} \Gcal(\Ocal_{\cl y})}{\Gcal(\Ocal_{\cl y})},
    \end{equation}
    where $\lambda = \lambda_1+\lambda_2$.
       We expect Proposition~\ref{prop:Adm-additive} to hold without any condition on $G_y$, but we do not attempt to optimise Proposition~\ref{prop:Adm} in this paper. 
   \end{rmk}

    Proposition~\ref{prop:Adm-additive} suggests the following definition of Kottwitz--Rapoport strata.
    \begin{defn}
        Suppose that $\Gcal_y$ is an Iwahori integral model for any $y\in Y$, and we have $I_{\cl y,\bullet} = (I_{\cl y})$ for each geometric point $\cl y$ over $y$ with $I_{\cl y}\ne\emptyset$. This implies that the $L^+_y\Gcal$-orbits in $(\Gr^{\leqslant\blambda_{\cl y}}_{\Gcal, (I_{\cl y})})_{\Delta(\cl y),\red}$ are indexed by $\Adm_{\Gcal,\cl y}(\lambda_{\cl y})\subset \widetilde W_{G_{\cl y}}$; namely, $\widetilde w_{\cl y}\in \Adm_{\Gcal,\cl y}(\lambda_{\cl y})$ corresponds to the locally closed (reduced) orbit $\Scal^{\widetilde w}$ such that
        \begin{equation}
            \Scal^{\widetilde w_{\cl y}}(\cl\FF_q) = \Gcal(\Ocal_{\cl y})\widetilde w_{\cl y}^{-1}\Gcal(\Ocal_{\cl y})/\Gcal(\Ocal_{\cl y}).
        \end{equation}
        Given $\widetilde\wbf\coloneqq (\widetilde w_{\cl y})$ where $\widetilde w_{\cl y}\in\Adm_{\Gcal,\cl y}(\lambda_{\cl y})$ for each $\cl y$ with $I_{\cl y}\ne\emptyset$, 
        we define $\Sht^{\leqslant\blambda_J,\widetilde\wbf}_{\Gcal,I_\bullet,\cl\ybf}$ to be the preimage of 
        \[
        \left.\big[\Lcal^+_{X^J}\Gcal \backslash \Gr^{\leqslant\blambda}_{\Gcal,J_\bullet} \big]\right|_{(X\setminus Y)^J} \times \prod_{\cl y \text{ s.t. } I_{\cl y}\ne\emptyset}\big[L^+_y\Gcal\backslash \Scal^{\widetilde w_{\cl y}}\big]
        \]
        via the local model diagram \eqref{eq:loc-mod}. The closure relation among strata $\left(\Sht^{\leqslant\blambda_J,\widetilde\wbf}_{\Gcal,I_\bullet,\cl\ybf}\right)_{\widetilde\wbf}$ is governed by the Bruhat order on $\prod_{\cl y}\Adm_{\Gcal,\cl y}(\lambda_{\cl y})$ in the usual way. In particular, the tuple $\widetilde\wbf_{\bsc} = (\widetilde w_{\cl y,\bsc})$ of the unique (minimal) length-$0$ elements $\widetilde w_{\cl y,\bsc}\in\Adm_{\Gcal,\cl y}(\lambda_{\cl y})$
        corresponds to the unique closed cell $\Sht^{\leqslant\blambda_J,\widetilde\wbf_{\bsc} }_{\Gcal,I_\bullet,\cl\ybf}\subset\Sht^{\leqslant\blambda_J}_{\Gcal,I_\bullet,\cl\ybf}$.

        If $J=\emptyset$, then we write $\Sht^{\widetilde\wbf}_{\Gcal,I_\bullet,\cl\ybf}$ for $\Sht^{\leqslant\blambda_J,\widetilde\wbf}_{\Gcal,I_\bullet,\cl\ybf}$, and call it the \emph{Kottwitz--Rapoport stratum} for $\widetilde\wbf$. 
        The unique closed stratum $\Sht^{\widetilde\wbf_{\bsc}}_{\Gcal,I_\bullet,\cl\ybf}$ called the \emph{basic Kottwitz--Rapoport stratum}.
%
%
    \end{defn}

    \begin{rmk}
        It is often convenient to view $\widetilde\wbf$ as a tuple $(\widetilde w_{\cl y})$ over all geometric points $\cl y$ over $y\in Y$, by setting $\widetilde w_{\cl y}=1$ whenever $I_{\cl y}=\emptyset$.
    \end{rmk}

    Next, we review  \emph{(basic) Newton strata}, and discuss the relation with $\Sht^{\leqslant\blambda_J,\widetilde\wbf}_{\Gcal,I_\bullet,\cl\ybf}$.
\begin{constr}\label{constr:loc-G-isosht}
We use the notation in \S\ref{ssec:fixed-legs}.
Let $S$ be an $\cl\FF_q$-scheme, and fix 
\[    ((x_i)_{i\in S}, (\Pcal_0,\Pcal_1), \varphi_1,\alpha)\in \Sht_{\Gcal,(I),\cl\ybf}^{\leqslant\blambda}(S).\]
Using the notation from \S\ref{ssect:G-Bun} and the proof of Lemma~\ref{lem:irreducible-shtuka}, the construction of \cite[\S3.6,\S5.2]{HamacherKim:Igusa} yields, for each $y\in Y$, a local $G_y$-isoshtuka $(\Lcal_y\Pcal_0,\varphi_y)$ in the sense of \cite[Definition~2.2]{HamacherKim:Igusa}; here, $\Lcal_y\Pcal_0$ is the $L_yG$-torsor  associated to $\Pcal_0|_{\Spf\Ocal_y\widehat\times_{\kappa_y}S}$, where $S$ is viewed as a $\kappa_y$-scheme via some chosen geometric point $\cl y$ over $y$, and
\begin{equation}\label{eq:loc-G-isosht}
    \varphi_y \colon
    \begin{tikzcd}
        {}^{\tau_y}(\Lcal_y\Pcal_0) \arrow[r,"\cong"] &\Lcal_y\Pcal_0
    \end{tikzcd}.
\end{equation}
Note that $(\Lcal_y\Pcal_0,\varphi_y)$ is independent of the choice of $\cl y$ up to isomorphism, and it only depends on $G_y$ but not on the choice of the integral model $\Gcal_y$.

    Now for any $y\in Y$ and $I_\bullet$, we define the universal local $G_y$-isoshtuka
    \begin{equation}\label{eq:univ-loc-isosht}
        (\Lcal_y\Pcal_0^{\univ},\varphi_y^{\univ})
    \end{equation}
     on $\Sht_{\Gcal,I_\bullet,\cl\ybf}^{\leqslant\blambda}$, as follows. If $I_\bullet = (I)$, then the universal $\Gcal$-shtuka $(\Pucal^{\univ},\alpha^{\univ})$ induces a local $G_y$-isoshtuka  on $\Sht_{\Gcal,(I),\cl\ybf}^{\leqslant\blambda}$ by the recipe in \eqref{eq:loc-G-isosht}. 
     In general, we define $ (\Lcal_y\Pcal_0^{\univ},\varphi_y^{\univ})$ to be the pullback of the universal $G_y$-isoshtuka by the ``joining modification'' map $\Sht_{\Gcal,I_\bullet,\cl\ybf}^{\leqslant\blambda} \to \Sht_{\Gcal,(I),\cl\ybf}^{\leqslant\blambda}$ in Corollary~\ref{cor:joining-modifications}.
\end{constr}

Fix a closed point $y$ of $X$. Given $b_y\in G(\breve F_y)$, we define a local $G_y$-isoshtuka $(P_y^0,\varphi_{b_y})$ over $\cl\FF_q$, where $P_y^0 = (L_yG)_{\cl\FF_q}$ and 
\begin{equation}
    \varphi_{b_y}\colon \begin{tikzcd}
        {}^{\tau_y}P_y^0 \cong (L_yG)_{\cl\FF_q} \arrow[r, "b_y"]& (L_yG)_{\cl\FF_q} = P_y^0
    \end{tikzcd},
\end{equation}
where the map is the left multiplication by $b_y$. Clearly, $(P_y^0,\varphi_{b_y})$ and $(P_y^0,\varphi_{b'_y})$ are isomorphic if and only if $b_y$ and $b'_y$ are $\tau_y$-conjugate in $G(\breve F_y)$. 

As usual, let $B(G_y)$ denote the set of $\tau_y$-conjugacy classes in $G(\breve F_y)$. Given $b_y\in G(\breve F_y)$, let $[b_y]\in  B(G_y)$ denote its $\tau_y$-conjugacy class.  For any algebraically closed extension $k$ of $\cl\FF_q$, any local $G_y$-isoshtuka over $k$ is isomorphic to the pullback of $(P_y^0,\varphi_{b_y})$ for a unique $b_y$ up to $\tau_y$-conjugacy; \emph{cf.} \cite[Theorem~1.1]{RapoportRichartz:Gisoc}.

Given a local $G_y$-isoshtuka $(P_y, \varphi_{b_y})$ over an $\FF_q$-scheme $S$, we get a (possibly empty) locally closed reduced subscheme $S^{[b_y]}\subset S$, called the \emph{Newton stratum} for $[b_y]$, which is characterised as follows: a geometric point $s \in S(\cl k)$ lies in $S^{[b_y]}$ if and only if the fibre of $(P_y,\varphi_y)$ at $s$ is isomorphic to $(P_y^0,\varphi_{b_y})$. (The proof of \cite[Theorem~3.6]{RapoportRichartz:Gisoc} works in the equi-characteristic setting without change.) One can define Newton strata even when the base is an algebraic stack by descending the Newton strata on charts.

\begin{defn}\label{def:Newton-Strata}
Fix a finite subset $Y\subset X$, $(I_y)_{y\in Y}$ and $\cl\ybf$ as in \S\ref{ssec:fixed-legs}. Choose $[b_y]\in B(G_y)$ for each $y\in Y$, and set $\bbf_Y\coloneqq ([b_y])_{y\in Y}$. Then, the \emph{Newton stratum} for $\bbf_Y$ is a (possibly empty) reduced intersection of the Newton strata for $[b_y]$; namely,
\[
    \Sht_{\Gcal,I_\bullet,\cl\ybf}^{\leqslant\blambda,\bbf_Y} \coloneqq\bigcap_{y\in Y} \Sht_{\Gcal,I_\bullet,\cl\ybf}^{\leqslant\blambda,[b_y]}.
\]
\end{defn}

We want to show that the Newton strata are indexed by certain tuples  of \emph{admissible $G_y$-isocrystals}. To explain, for $\lambda\in X_\ast(T)_+$ and $a\in\ZZ$, we let ${}^{\tau^a}\lambda\in X_\ast(T)_+$ denote the unique representative of the conjugacy class of $G$-valued cocharacter  ${}^{\tau^a}\lambda$. If the reflex field of $\lambda$ is $F$, then we have ${}^{\tau^a}\lambda = \lambda$ for any $a\in \ZZ$.
\begin{prop}\label{prop:Adm}
Suppose that $G_y$ splits over a \emph{tame} extension of $F_y$ for each $y\in Y$. 
Fix a geometric point $\cl y$ over each $y\in Y$, and set 
    \[\lambda_y\coloneqq\sum_{a=0}^{\deg(y)-1}\sum_{i\text{ st }\tau^{-a}(\cl y) = x_i}{}^{\tau^{a}}\lambda_i\in X_\ast(T)_+.\] 
    Then $\Sht_{\Gcal,I_\bullet,\cl\ybf}^{\leqslant\blambda,\bbf_Y}$ is non-empty only if $[b_y]\in B(G_y,\lambda_y)$, where $B(G_y,\lambda_y)$ is the set of $\lambda_y$-admissible elements in $B(G_y)$ defined by Kottwitz \cite[\S6]{Kottwitz:Gisoc2}.
\end{prop}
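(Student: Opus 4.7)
The plan is to extract $b_y\in G(\breve F_y)$ from a given geometric point of the Newton stratum, to show that its $\Gcal(\Ocal_{\cl y})$-double coset is parametrised by a $\lambda_y$-admissible Iwahori--Weyl group element, and finally to appeal to the (known) implication that $\lambda_y$-admissibility forces the $\tau_y$-conjugacy class to lie in $B(G_y,\lambda_y)$. Concretely, fix a geometric point $s\in\Sht^{\leqslant\blambda,\bbf_Y}_{\Gcal,I_\bullet,\cl\ybf}(k)$ over an algebraically closed extension $k$ of $\cl\FF_q$. For each $y\in Y$, enumerate the geometric points of $X\otimes k$ above $y$ as $\cl y_a\coloneqq \tau^{-a}(\cl y)$ for $a=0,\dots,\deg(y)-1$, so that $\tau$ permutes them cyclically and $\tau_y=\tau^{\deg(y)}$ fixes each $\cl y_a$. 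By Construction~\ref{constr:loc-G-isosht}, the local $G_y$-isoshtuka at $y$ is obtained by composing $\alpha$ with the $\varphi_j^{-1}$'s on the formal completion of $X\otimes k$ along $y\otimes k$ and then iterating $\tau$ a total of $\deg(y)$ times; restricting to the direct factor at $\cl y$ and choosing a trivialisation of $\Lcal^+_y\Pcal_0$, one represents the resulting $\tau_y$-linear automorphism by an element $b_y\in G(\breve F_y)$ whose class is $[b_y]$.

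First I would analyse the modification data separately at each $\cl y_a$. The local model diagram~\eqref{eq:loc-mod} records the modifications of type $\blambda_{\cl y_a}$ as a $k$-point of $(\Gr^{\leqslant\blambda_{\cl y_a}}_{\Gcal,I_{\cl y_a,\bullet}})_{\Delta(\cl y_a)}$. Since $G_y$ splits over a tame extension of $F_y$, Proposition~\ref{prop:Adm-additive} identifies the underlying reduced scheme with the image of the convolution of individual modifications for the ``summed'' coweight $\mu_a\coloneqq\sum_{i\in I_{\cl y_a}}\lambda_i$, and the Iwahori--Cartan description~\eqref{eq:gl-Sch-var-lift} then lifts this point to an element of $\bigcup_{\widetilde w\in\Adm_{\Gcal,\cl y_a}(\mu_a)}\Gcal(\Ocal_{\cl y_a})\widetilde w^{-1}\Gcal(\Ocal_{\cl y_a})$. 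Next I would assemble these local pieces: iterating $\tau$ transports the modification originally sitting at $\cl y_a$ back to $\cl y$ after $a$ applications of $\tau$, which conjugates the coweight type from $\mu_a$ to ${}^{\tau^a}\mu_a=\sum_{i\text{ s.t. }\tau^{-a}(\cl y)=x_i}{}^{\tau^a}\lambda_i$ and produces a product of $\deg(y)$ cosets at $\cl y$. A further application of Proposition~\ref{prop:Adm-additive} (now to the convolution of the bounds $({}^{\tau^a}\mu_a)_a$ at the single place $\cl y$) collapses this product into a single admissible double coset indexed by some $\widetilde w\in\Adm_{\Gcal,\cl y}(\lambda_y)$, where $\lambda_y=\sum_a {}^{\tau^a}\mu_a$ agrees with the formula in the statement.

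To conclude I would invoke Mazur's inequality in group-theoretic form: for any $\widetilde w\in\Adm_{\Gcal,\cl y}(\lambda_y)$, every element of $\Gcal(\Ocal_{\cl y})\widetilde w\Gcal(\Ocal_{\cl y})$ has $\tau_y$-conjugacy class in $B(G_y,\lambda_y)$. Under our tameness hypothesis this is available through the work of Rapoport--Richartz in the unramified case and He (and collaborators) in the general parahoric case. The main obstacle I anticipate is the bookkeeping in the middle step: carefully unwinding Construction~\ref{constr:loc-G-isosht} to pin down the precise $\tau^a$-twist affecting each local factor together with the order in which the cosets at $\cl y$ are multiplied, and then checking that the summed coweight produced by iterating Proposition~\ref{prop:Adm-additive} really equals $\lambda_y$ rather than a $\tau$-shift of it. Once this is verified, the tameness assumption is used exactly twice -- for the validity of Proposition~\ref{prop:Adm-additive} and for the admissibility-to-Newton implication -- and the proposition follows.
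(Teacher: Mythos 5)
Your proposal is correct in substance and reaches the proposition, but it recombines the local data differently from the paper. The paper packages the whole fibre over $y$ as a local shtuka for the Weil restriction $\widetilde\Gcal_y=\Res_{\Ocal_y/\FF_q\pot{\varpi_y}}\Gcal_y$: after one application of Proposition~\ref{prop:Adm-additive} (factor by factor over the points $\cl y_a$), the representing element $\widetilde b_y\in G(\breve F\otimes_F F_y)$ lies in a $\widetilde\lambda_y$-admissible double coset, whence $[\widetilde b_y]\in B(\widetilde G_y,\widetilde\lambda_y)$ by the group-theoretic Mazur inequality \cite[Theorem~A]{He:KottwitzRapoportConj} applied to $\widetilde G_y$, and the passage to $G_y$ is then outsourced to the Shapiro isomorphism and the cited inclusion $B(\widetilde G_y,\widetilde\lambda_y)\hookrightarrow B(G_y,\lambda_y)$ of \eqref{eq:Shapiro} from \cite{HamacherKim:Gisoc}. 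You instead unwind the Shapiro map by hand: transport the modification at each $\cl y_a$ to $\cl y$ by $\tau^a$, multiply the $\deg(y)$ cosets, invoke the additivity of admissible sets \eqref{eq:Adm-additive} a second time to land in $\Adm_{\Gcal,\cl y}(\lambda_y)$, and then apply Mazur's inequality to $G_y$ directly. Both routes use exactly the same two tameness-sensitive inputs (Proposition~\ref{prop:Adm-additive} and He's theorem), so your argument is sound; what the paper's route buys is that the cyclic-composition bookkeeping you flag (which Frobenius twist hits which factor, the ordering of the cosets, and the $\widetilde w^{-1}$ sign convention of \eqref{eq:gl-Sch-var}) is absorbed into the cited Shapiro-map compatibility, and moreover keeping $[\widetilde b_y]$ and $B(\widetilde G_y,\widetilde\lambda_y)$ explicit is what later feeds into the $\delta$-straight-element argument of Corollary~\ref{cor:straight-KR}; what your route buys is a more self-contained and concrete verification at the cost of doing that bookkeeping yourself. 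If you write it up, do carry out the transport computation carefully (your ${}^{\tau^a}\mu_a$ matches the twist in the statement of $\lambda_y$), and note that \eqref{eq:Adm-additive} is only stated there for two factors, so you should iterate it (or invoke Proposition~\ref{prop:Adm-additive} for a larger index set), which is harmless since the statement is uniform in $\blambda$.
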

When $|I_y|=1$ for each $y\in Y$, we may take $\cl y = \cl y_i$ and $\lambda_y = \lambda_i$ for the unique $i\in I_y$. Then, the proposition is a simple consequence of the group-theoretic formulation of Mazur's inequality  \cite[Theorem~A]{He:KottwitzRapoportConj}. We generalise this observation to the case when there are two distinct legs supported on the same closed point.

We assume that $G_y$ splits over a tame extension of $F_y$ so that we can apply Proposition~\ref{prop:Adm-additive}, but we expect this statement to be valid for any $G_y$.
\begin{proof}
%

    We may assume without loss of generality that $I_\bullet = (I)$.
    Given a $\Gcal$-shtuka
    \[(\Pucal,\alpha) = \big( (\bar x_i), 
    \begin{tikzcd}
        \ltau\Pcal_0 \arrow[r, "\cong"', "\alpha"] & \Pcal_1 & \arrow[l, dashrightarrow,"\varphi_1"']\Pcal_0
    \end{tikzcd}
    \big)  \in\Sht_{\Gcal,(I),\cl\ybf}^{\leqslant\blambda}(\cl\FF_q),\]
    its localisation at $y$ in the sense of \cite[\S3.6]{HamacherKim:Igusa} is given by
    \begin{equation}\label{eq:global-local}
        \widetilde\varphi_y\coloneqq \varphi_1^{-1}\circ\alpha\ \colon \begin{tikzcd}
            (\ltau\Pcal_0)|_{\Spf(\cl\FF_q\widehat\otimes\Ocal_y)} \arrow[r, dashrightarrow ]& \Pcal_0|_{\Spf(\cl\FF_q\widehat\otimes\Ocal_y)}
        \end{tikzcd},
    \end{equation}
    which we view as a local $\Res_{\Ocal_y/\FF_q\pot{\varpi_y}}\Gcal_y$-shtuka over $\cl\FF_q$ (where $\varpi_y\in\Ocal_y$ is a uniformiser). Set $\widetilde\Gcal_y\coloneqq\Res_{\Ocal_y/\FF_q\pot{\varpi_y}}\Gcal_y$ and $\widetilde G_y\coloneqq \Res_{F_y/\FF_q\rpot{\varpi_y}}G_y$.

    We fix a geometric point $\cl y$ over $y$, and write $\cl y_a\coloneqq \tau^{-a+1}(\cl y)$ for $a=1,\cdots,[\kappa_y:\FF_q]$. Then the Iwahori--Cartan decomposition for $\widetilde\Gcal_y$ can be written as follows
    \begin{multline}\label{eq:Iwahori-Cartan-tilde}
        \widetilde\Gcal_y(F_{\cl y})\backslash\widetilde G_y(F_{\cl y})/ \widetilde\Gcal_y(F_{\cl y}) \cong\Gcal(\cl\FF_q\widehat\otimes\Ocal_y)\backslash G(\breve F\otimes_F F_y)/\Gcal(\cl\FF_q\widehat\otimes\Ocal_y) \\
        \cong \prod_{a=1}^{[\kappa_y:\FF_q]}\Gcal(\Ocal_{\cl y_a})\backslash G(F_{\cl y_a})/ \Gcal(\Ocal_{\cl y_a}) 
        \cong \prod_{a=1}^{[\kappa_y:\FF_q]} W_{\Gcal_{\cl y_a}}\backslash \widetilde W_{G_{\cl y_a}}/W_{\Gcal_{\cl y_a}},
    \end{multline}
    and dominant coweights for $\widetilde G_y$ is given by tuples of dominant coweights for $G_{\cl y_a}$. We set 
    \begin{equation}\label{eq:lambda-tilde}
        \widetilde\lambda_{y}\coloneqq(\lambda_{\cl y_a})_{a=1}^{[\kappa_u:\FF_q]} \in \big(X_\ast(T)_+\big)^{[\kappa_u:\FF_q]},
    \end{equation}
    where $\lambda_{\cl y_a}\coloneqq \sum_{i\in I_{\cl y_a}}\lambda_i$ if $I_{\cl y_a}\ne\emptyset$, and $\lambda_{\cl y_a} = \underline0$ if $I_{\cl y_a}=\emptyset$
    
    Now, choose a trivialisation of $\Pcal_{j}|_{\Spf(\cl\FF_q\widehat\otimes\Ocal_y)}$ for $j=0,1$, and express $\varphi_1$ and $\widetilde\varphi_y$ as the left multiplication by $g_1$ and $\widetilde b_y$  in $G(\breve F\otimes_F F_y)$, respectively. Then by Proposition~\ref{prop:Adm-additive} (which is valid as $G_y$ splits over a tame extension of $F_y$), the $\Gcal(\cl\FF_q\widehat\otimes\Ocal_y)$-double coset of $g_1^{-1}$ corresponds to an element in $\prod_{a=1}^{[\kappa_y:\FF_q]}\Adm_{\Gcal,\cl y_a}(\lambda_{\cl y_a})$. Since $b_y\in g_1^{-1}\Gcal(\cl\FF_q\widehat\otimes\Ocal_y)$, the $\tau$-conjugacy class of $\widetilde b_y$ is $\widetilde \lambda_y$-admissible; i.e.,
    \begin{equation}\label{eq:tilde-admissibility}
        [\widetilde b_y] \in B(\widetilde G_y,\widetilde \lambda_y).
    \end{equation}
    Lastly, the Shapiro isomorphism $N\colon B(\widetilde G_y)\riso B(G_y)$ \cite[\S2.3, (2.5)]{HamacherKim:Gisoc} induces
    \begin{equation}\label{eq:Shapiro}
        \begin{tikzcd}
            B(\widetilde G_y,\widetilde\lambda_y) \arrow[r, hookrightarrow, "N"] & B(G_y,\lambda_y)
        \end{tikzcd} ,
    \end{equation}
    where $\lambda_y$ is as in the statement.

    By construction of local $G_y$-isoshtuka of $(\Pucal,\alpha)$, we have $(\Pucal,\alpha)\in \Sht^{\leqslant\blambda,\bbf_Y}_{\Gcal,(I),\cl\ybf}(\cl\FF_q)$
    if and only if $N([\widetilde b_y]) = [b_y]$ for each $y\in Y$ where $\bbf_Y = ([b_y])_{y\in Y}$. The proposition now follows since $N([\widetilde b_y])\in B(G_y,\lambda_y)$.
\end{proof}

The proof of Proposition~\ref{prop:Adm} be refined as follows:
\begin{cor}\label{cor:straight-KR}
    In the setting of Proposition~\ref{prop:Adm}, we additionally assume that $\Gcal_y$ is an Iwahori integral model and $I_\bullet = (I)$. 
    We use the notation in the proof of Proposition~\ref{prop:Adm}, and choose $\bbf_Y = ([b_y])_{y\in Y}$ so that for each $y\in Y$ we have $[b_y] = N([\widetilde b_y])$ for some $[\widetilde b_y]\in B(\widetilde G_y,\widetilde\lambda_y)$, where $N$ is the Shapiro map \eqref{eq:Shapiro}. 
    
    Then the Newton stratum $\Sht^{\leqslant\blambda,\bbf_Y}_{\Gcal,(I),\cl\ybf}$ contains $\Sht^{\leqslant\blambda_J,\widetilde\wbf}_{\Gcal,(I),\cl \ybf}$ for some $\widetilde\wbf$. Furthermore,  $\Sht^{\widetilde\wbf_{\bsc}}_{\Bcal^\times,(I),\cl\ybf}$ is contained in the basic Newton stratum $\Sht^{\leqslant\blambda,\bsc}_{\Bcal^\times,(I),\cl\ybf}$.
\end{cor}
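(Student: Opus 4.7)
The plan is to refine the local analysis carried out in the proof of Proposition~\ref{prop:Adm} and combine it with the existence of straight (minimal-length) representatives of $\tau_y$-conjugacy classes in the extended Iwahori--Weyl group.

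First I would extract the following dictionary from that proof. For any $\cl\FF_q$-point $(\Pucal,\alpha)$ of $\Sht^{\leqslant\blambda}_{\Gcal,(I),\cl\ybf}$, after trivialising $\Pcal_0$ and $\Pcal_1$ over $\Spf(\cl\FF_q\widehat\otimes\Ocal_y)$, the localised Frobenius $\widetilde\varphi_y=\varphi_1^{-1}\circ\alpha$ is represented by $\widetilde b_y=g_1^{-1}g_\alpha$ with $g_\alpha\in\Gcal(\cl\FF_q\widehat\otimes\Ocal_y)$. Under the Iwahori--Cartan decomposition~\eqref{eq:Iwahori-Cartan-tilde} (valid because $\Gcal_y$ is Iwahori and $I_\bullet=(I)$), a point lies in the KR stratum $\Sht^{\widetilde\wbf}_{\Gcal,(I),\cl\ybf}$ if and only if $g_1^{-1}$ -- equivalently $\widetilde b_y$, since $g_\alpha$ is a unit -- lies in $\Gcal(\cl\FF_q\widehat\otimes\Ocal_y)\,\widetilde\wbf^{-1}\,\Gcal(\cl\FF_q\widehat\otimes\Ocal_y)$, and the Shapiro map~\eqref{eq:Shapiro} identifies $N([\widetilde b_y])$ with the local Newton class at $y$.

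For the first assertion, given a choice $[\widetilde b_y^0]\in B(\widetilde G_y,\widetilde\lambda_y)$ with $N([\widetilde b_y^0])=[b_y]$, I would invoke He's proof of the Kottwitz--Rapoport conjecture together with He--Nie's theorem on minimal-length representatives (see \cite{He:KottwitzRapoportConj}) to choose $\widetilde\wbf\in\Adm_{\Gcal,y}(\widetilde\lambda_y)$ such that $\widetilde\wbf^{-1}$ is \emph{straight} and $[\widetilde\wbf^{-1}]=[\widetilde b_y^0]$ in $B(\widetilde G_y)$. Straightness collapses the Iwahori double coset $\Gcal\,\widetilde\wbf^{-1}\,\Gcal$ into the single $\tau_y$-conjugacy class $[\widetilde\wbf^{-1}]$, so every $\widetilde b_y$ coming from a point of $\Sht^{\widetilde\wbf}_{\Gcal,(I),\cl\ybf}$ satisfies $[\widetilde b_y]=[\widetilde b_y^0]$ and hence has Newton class $[b_y]$ at $y$. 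Taking the product of such $\widetilde\wbf$ across $y\in Y$ yields the first claim.

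For the basic case with $\Gcal=\Bcal^\times$, each component $\widetilde w_{\cl y_a,\bsc}$ of $\widetilde\wbf_\bsc$ has length zero and thus normalises $\Gcal(\Ocal_{\cl y_a})$, so its Iwahori double coset reduces to the single left coset $\widetilde w_{\cl y_a,\bsc}^{-1}\,\Gcal(\Ocal_{\cl y_a})$; Lang's theorem applied to the smooth connected group $\Gcal(\Ocal_{\cl y_a})$ then shows that every element of this coset is $\tau_y$-conjugate to $\widetilde w_{\cl y_a,\bsc}^{-1}$, bypassing any appeal to He--Nie. For $G=\Bcal^\times$ an inner form of $\GL_d$, length-zero elements map under the Kottwitz map to basic $\tau_y$-conjugacy classes, and Shapiro sends the basic class in $B(\widetilde G_y,\widetilde\lambda_y)$ to the unique basic $[b_y^\bsc]\in B(G_y,\lambda_y)$ defining the basic Newton stratum; combined with the dictionary above, this gives $\Sht^{\widetilde\wbf_\bsc}_{\Bcal^\times,(I),\cl\ybf}\subseteq\Sht^{\leqslant\blambda,\bsc}_{\Bcal^\times,(I),\cl\ybf}$. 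The main obstacle is bookkeeping the sign convention $\widetilde\wbf\leftrightarrow\widetilde\wbf^{-1}$ together with the duality $\widetilde\lambda\leftrightarrow-\widetilde\lambda$, so that He's straight element is actually produced inside $\Adm_{\Gcal,y}(\widetilde\lambda_y)$ in the form $\widetilde\wbf^{-1}$; this is a standard symmetry of admissible sets under the anti-involution $\widetilde w\mapsto\widetilde w^{-1}$, which can be pinned down once the conventions are fixed explicitly.
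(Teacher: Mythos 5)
Your proposal is correct and takes essentially the same route as the paper: the paper likewise picks, via \cite[Proposition~4.1]{He:KottwitzRapoportConj} (which rests on He--Nie's minimal-length/straight elements), a $\delta$-straight $\widetilde\lambda_y$-admissible representative of each $[\widetilde b_y]$ and then uses He's result (\cite[Proposition~3.6, Theorem~3.7]{He:GeomADL}) that the Iwahori double coset of a straight element lies in a single $\sigma$-conjugacy class, exactly your ``collapse'' step, with the same sign/inverse bookkeeping left implicit. The only (cosmetic) difference is the basic case: the paper just notes that length-zero elements are $\delta$-straight and go to the basic class under the same bijection, while you redo that special case by hand with a twisted (pro-algebraic) Lang's theorem argument, which is a standard equivalent of the ingredient the paper quotes.
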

\begin{proof}
Recall that the natural $\tau$-action on $G(\breve F\otimes_F F_y)$ induces an automorphism $\delta$ of $\prod_{a=1}^{[\kappa_y:\FF_q]}\widetilde W_{G_{\cl y_a}}$. Let $B(\prod_{a}\widetilde W_{G_{\cl y_a}},\delta)_{\delta-\str}$ denote the set of $\delta$-conjugacy classes of ``$\delta$-straight elements'' in the sense of \cite[\S2.4]{He:GeomADL}. Then we have a natural bijection 
   \begin{equation}\label{eq:He-map}
       \begin{tikzcd}
        B(\prod_{a}\widetilde W_{G_{\cl y_a}},\delta)_{\delta-\str} \arrow[r,"\cong"] & B(\widetilde G_y),
    \end{tikzcd}
   \end{equation}
   sending a $\delta$-straight representative $\widetilde\wbf_y\coloneqq(\widetilde w_{\cl y_a})_a \in \prod_{a=1}^{[\kappa_y:\FF_q]}\widetilde W_{G_{\cl y_a}}$ to the \emph{unique} $\tau$-conjugacy class in $\widetilde G_y(F_{\cl y}) = G(\breve F\otimes_F F_y)$ containing the $\widetilde \Gcal_y(\Ocal_{\cl y})$-double coset corresponding to $\widetilde\wbf_y$ via \eqref{eq:Iwahori-Cartan-tilde}. (See \cite[\S3]{He:GeomADL}; especially, Proposition~3.6 and Theorem~3.7.) Furthermore, for any $[\widetilde b_y]\in B(\widetilde G_y,\widetilde\lambda_y)$, there exists a \emph{$\delta$-straight} $\widetilde\lambda_y$-admissible element 
   \begin{equation}
       \widetilde\wbf_y\in \prod_{a=1}^{[\kappa_y:\FF_q]}\Adm_{\Gcal,\cl y_a}(\lambda_{\cl y_a})
   \end{equation}
    which maps to $[\widetilde b_y]$ via \eqref{eq:He-map}; \emph{cf.} \cite[Proposition~4.1]{He:KottwitzRapoportConj}. Note that if $I_{\cl y_a}=\emptyset$ then $\lambda_{\cl y_a}= \underline 0$, so we have $\widetilde w_{\cl y_a}=1$. Thus, choosing a $\delta$-straight representative $ \widetilde\wbf_y$ of $[\widetilde b_y]$ for each $y\in Y$ as above, we obtain a tuple $\widetilde\wbf$ that defines $\Sht^{\leqslant\blambda_J,\widetilde\wbf}_{\Gcal,(I),\cl\ybf}$, and the proof of Proposition~\ref{prop:Adm} also shows that $\Sht^{\leqslant\blambda_J,\widetilde\wbf}_{\Gcal,(I),\cl\ybf}$ is contained in $\Sht^{\leqslant\blambda, \bbf_Y}_{\Gcal,(I),\cl\ybf}$. Lastly, the second claim also follows since a length-$0$ element is always $\delta$-straight, and it maps to a basic element in $B(\widetilde G_y)$ via \eqref{eq:He-map}. (Note that the Shapiro map \eqref{eq:Shapiro} preserves the unique basic elements on both sides.)
\end{proof}

\begin{defn}\label{def:basic-str}
    If we choose $\bbf_Y\coloneqq ([b_y])_{y\in Y}$ so that each $[b_y]$ is the basic element in $B(G_y^\times,\lambda_y)$. Then we write $\Sht^{\leqslant\blambda,\bsc}_{\Gcal, I_\bullet, \cl\ybf}$ in place of $\Sht^{\leqslant\blambda,\bbf_Y}_{\Gcal, I_\bullet, \cl\ybf}$. Note that $\Sht^{\leqslant\blambda,\bsc}_{\Gcal, I_\bullet, \cl\ybf}$ is a \emph{closed} substack of $\Sht^{\leqslant\blambda}_{\Gcal, I_\bullet, \cl\ybf}$, by the same proof as \cite[Theorem~3.6(iii)]{RapoportRichartz:Gisoc}.
\end{defn}

When $\Gcal = \Bcal^\times$ where $\Bcal$ is a hereditary $\Ocal_X$-order of a central simple algebra $B$ over $F$, we can show the non-emptiness of Kottwitz--Rapoport and Newton strata. Let us first show that the basic Newton strata are non-empty.

\begin{prop}\label{prop:non-empty-basic-stratum}
    If $\sum_{i\in I}\deg(\lambda_i) = 0$ then $\Sht^{\leqslant\blambda,\bsc}_{\Bcal^\times, I_\bullet, \cl\ybf}$ is non-empty for any $\cl\ybf$. 
\end{prop}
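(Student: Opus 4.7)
The plan is to argue by three successive reductions, eventually reducing the problem to the non-emptiness of a specific Kottwitz--Rapoport stratum, and then to construct an explicit point there.

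First, I would reduce to the case where $\Bcal$ is Iwahori at each $y \in Y$. Choose a hereditary $\Ocal_X$-order $\Bcal' \subseteq \Bcal$ that is Iwahori at each $y \in Y$ and agrees with $\Bcal$ elsewhere; such $\Bcal'$ exists by prescribing a complete chain of $\Ocal_y$-lattices in $B_y$ at each $y \in Y$. The inclusion $\iota\colon \Bcal'^{\times} \hookrightarrow \Bcal^{\times}$ restricts to an isomorphism over $X \setminus Y$, so Theorem~\ref{th:change-parahorics} yields a proper surjective morphism
\[ \Sht^{\leqslant\blambda}_{\Bcal'^{\times}, I_\bullet, \cl\ybf} \twoheadrightarrow \Sht^{\leqslant\blambda}_{\Bcal^{\times}, I_\bullet, \cl\ybf}. \]
Since the local $G_y$-isoshtuka in Construction~\ref{constr:loc-G-isosht} depends only on the generic fibre, this morphism respects Newton stratifications; so it suffices to prove the statement for $\Bcal'$. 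By Corollary~\ref{cor:joining-modifications} we may further assume $I_\bullet = (I)$.

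Second, since $G = B^{\times}$ is an inner form of $\GL_d$, each $G_y$ splits over the completed maximal unramified extension $\breve F_y$, which is tame over $F_y$. Corollary~\ref{cor:straight-KR} therefore applies and gives the containment
\[ \Sht^{\widetilde\wbf_{\bsc}}_{\Bcal'^{\times}, (I), \cl\ybf} \subseteq \Sht^{\leqslant\blambda, \bsc}_{\Bcal'^{\times}, (I), \cl\ybf}, \]
so it suffices to produce a single $\cl\FF_q$-point of the basic Kottwitz--Rapoport stratum on the left.

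Third, the idea is to construct such a point directly. The basic representative $\widetilde w_{\cl y, \bsc}$ is the unique length-$0$, $\delta$-straight element of $\Adm_{\Bcal'^{\times}, \cl y}(\lambda_{\cl y})$, and for $G_y$ an inner form of $\GL_d$ it corresponds to a translation by a central cocharacter, realised by a specific $g_{\cl y}\in G(F_{\cl y})$ whose image under the Shapiro map \eqref{eq:Shapiro} is the basic element of $B(G_y, \lambda_y)$. Globally, take a rank-$1$ locally free $\Bcal'$-module $\Pcal_0$ on $X \times \cl\FF_q$ and a modification $\varphi_1\colon \Pcal_0 \dashrightarrow \Pcal_1$ supported at $\cl\ybf$ whose local expression at each $y \in Y$ is $g_{\cl y}^{-1}$. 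The hypothesis $\sum_i \deg(\lambda_i) = 0$ ensures $\deg(\Pcal_0) = \deg(\Pcal_1)$, and the remaining task is to produce a Frobenius-equivariant isomorphism $\alpha\colon {}^{\tau}\Pcal_0 \riso \Pcal_1$.

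This last step is the main obstacle: $\Bun_{\Bcal'^{\times}}$ is not quasi-compact (Remark~\ref{rmk:non-qc}), so Lang's theorem does not apply naively. A workaround is to restrict to a Harder--Narasimhan-truncated quasi-compact open substack containing both isomorphism classes and apply Lang--Steinberg there; alternatively, one can construct the shtuka directly from a global element $g \in (B \otimes_{\FF_q}\cl\FF_q)^{\times}$ whose local invariants at each $y \in Y$ match those of $g_{\cl y}$, the existence of which follows from a Riemann--Roch-type argument for central simple algebras under the degree condition.
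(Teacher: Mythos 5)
Your first two reductions are sound: passing to an Iwahori order $\Bcal'\subseteq\Bcal$ via Theorem~\ref{th:change-parahorics} (the map on shtuka spaces is surjective and respects Newton strata, since the local $G_y$-isoshtuka depends only on the generic fibre), invoking Corollary~\ref{cor:joining-modifications}, and then using the second assertion of Corollary~\ref{cor:straight-KR} to reduce to producing a point of the basic Kottwitz--Rapoport stratum. The gap is your third step. Note that in the paper the logical order is the reverse of yours: non-emptiness of the basic Kottwitz--Rapoport stratum (Corollary~\ref{cor:nonempty-KR}) is \emph{deduced from} Proposition~\ref{prop:non-empty-basic-stratum} together with the non-emptiness of affine Deligne--Lusztig varieties for straight elements, so you cannot quote it -- and indeed you do not -- but your substitute construction is missing exactly where it matters. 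Choosing $\Pcal_0$ and a modification $\varphi_1$ of the prescribed admissible type is easy; the entire content of the statement is the existence of the Frobenius structure $\alpha\colon{}^{\tau}\Pcal_0\riso\Pcal_1$, i.e.\ a solution of a Frobenius-twisted gluing problem on $\Bun_{\Bcal'^\times}$. Neither of your workarounds supplies it: Lang--Steinberg is a statement about connected algebraic groups over finite fields, and a Harder--Narasimhan-truncated open substack of $\Bun_{\Bcal'^\times}$ is neither a group nor stable under the relevant Hecke correspondence, so there is nothing to apply it to; and the ``global element $g\in(B\otimes_{\FF_q}\cl\FF_q)^\times$ with matching local invariants'' route merely restates the problem -- one would need an element whose double cosets are the prescribed length-$0$ admissible ones at each $\cl y$ and integral at every other point of $\breve X$, together with the $\tau$-twisted identification, and no Riemann--Roch argument producing such data is given (nor is one readily available; if it were, Lemma~\ref{lem:non-empty-Sht} at ramified legs would be equally immediate, which it is not).

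For comparison, the paper avoids this global existence problem by a special-point (CM) construction: choose a degree-$d$ separable extension $E/F$ unramified and totally inert at every place of $\Ram(B)\cup Y$, embed $E\hookrightarrow B$, modify the maximal order so that $\pi_{E,\ast}\Ocal_{\widetilde X_E}\subset\Bcal$, and push forward shtukas for $\Tcal=\Res_{\widetilde X_E/X}\GG_m$. Ellipticity of $T_y\subset B_y^\times$ at each $y\in Y$ forces the image to lie in the basic Newton stratum; non-emptiness of the $\Tcal$-shtuka spaces is elementary (the ``trivial'' shtuka when all legs coincide, spread out by finite flatness over the locus unramified for $E/F$); and a purely combinatorial argument produces Weyl-conjugates $\lambda_i'$ of (the minuscule replacements of) the $\lambda_i$ with $\sum_i\lambda_i'=\underline0$ and $\lambda'_{i,\dom}\leqslant\lambda_i$. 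If you want to keep your outline, you must either prove the twisted existence statement behind your step 3 directly or replace that step by a torus construction of this kind; as written, the argument is incomplete.
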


The proof follows the same line of thought as the ``incomplete idea of proof'' for the \emph{weak} basic non-emptiness of Kottwitz--Rapoport strata in \cite[\S5.2]{Breutmann:Thesis}. Furthermore, for $\Gcal = \Bcal^\times$ one can ``complete'' the argument in \emph{loc.~cit.} -- especially, \emph{(Step~2)} -- by explicit construction.

\begin{proof}
    By Theorem~\ref{th:change-parahorics}, we may assume that $\Bcal$ is a \emph{maximal} order. Without loss of generality, we may assume that $I = \bigsqcup I_{\cl y}$ (so $J=\emptyset$) and $I_\bullet = (I)$ using Corollary~\ref{cor:joining-modifications}.

    By weak approximation and Krasner's lemma, there is a finite separable extension $E/F$ of degree $d$ which is unramified and totally inert at each $y\in \Ram(B)\cup Y$. Then there is an $F$-algebra embedding $E\hookrightarrow B$; indeed, by choosing an element $\gamma_0^\ast\in E^\times$ so that $E = F(\gamma_0^\ast)$, we can find an element $\gamma_0\in B^\times$ whose characteristic polynomial equals the minimal polynomial of $\gamma_0^\ast$ (\emph{cf.} \cite[Lemma~8.14]{HamacherKim:Igusa}), and the $F$-subalgebra $F[\gamma_0]\subset B$ is isomorphic to $E$. 

    Let $\pi_E\colon \widetilde X_E\to X$ denote the finite cover of smooth curves corresponding to $E/F$. Then for some finite subset $Z\subset X$, the intersection $E_{x'}\cap\Bcal_x$ is the valuation ring $\Ocal_{E,x'}$ if $\pi_E(x')\notin Z$. We claim that if $x'|x$ for $x\in Z$, then there is a maximal $\Ocal_x$-order $\Bcal'_x$ of $B_x$ that contains the valuation ring $\Ocal_{E,x'}$. This is clear if $x\notin \Ram(B)$, so we may assume that $x\in \Ram(B)$, and thus $E_{x'}/F_x$ is an unramified extension of degree $d$ by assumption. Writing $B_x\cong M_e(\Delta_x)$ for some central division algebra $\Delta_x$ over $F_x$ of index $d/e$, we may assume that the unique maximal order $\Ocal_{\Delta_x}$ contains the valuation ring of subfield $E'\subset E_{x'}$ of degree $d/e$ over $F_x$, by modifying the embedding $E_{x'}\hookrightarrow B_x$ if necessary. Then since we have
    \[\Ocal_{E, x'} \subset M_{e}(\Ocal_{E'}) \subset M_e(\Ocal_{\Delta_x})\subset M_e(\Delta_x)\cong \Bcal_x,\]
    we may take $\Bcal'_x\subset B_x$ to be the maximal $\Ocal_x$-order corresponding to $M_e(\Ocal_{\Delta_x})$.

    Recall that $\Bcal'_x$ is a $B_x^\times$-conjugate of $\Bcal_x$, so by the Beauville--Laszlo descent lemma \cite{Beauville-Laszlo:DescentLemma} we obtain a maximal $\Ocal_X$-order $\Bcal'$ of $D$ by glueing $\Bcal|_{X\setminus Z}$ and $\{\Bcal'_x\}_{x\in Z}$. Then $\Bcal'$ clearly contains $\pi_{E,\ast}\Ocal_{\widetilde X_E}$, and the Beauville--Laszlo descent lemma induces an isomorphism $\Bun_{\Bcal^\times}\cong \Bun_{\Bcal'^\times}$. Thus, we may rename $\Bcal'$ as $\Bcal$, and assume that $\pi_{E,\ast}\Ocal_{\widetilde X_E}$ is contained in $\Bcal$ without loss of generality. 

    Set $\Tcal\coloneqq \Res_{\widetilde X_E/X}\GG_m$, which is a smooth affine group scheme over $X$ with connected fibres.  Moreover, the inclusion $\pi_{E,\ast}\Ocal_{\widetilde X_E}\to \Bcal$ induces a group homomorphism $\iota\colon\Tcal\to\Bcal^\times$, 
   and thus we get a morphism
    \begin{equation}
        \iota_\ast\colon \begin{tikzcd}
         \Bun_\Tcal \arrow[r]& \Bun_{\Bcal^\times}
        \end{tikzcd}.
    \end{equation}
    In terms of vector bundles, $\Bun_{\Res_{\widetilde X_E/X}\GG_m}(S)$ classifies rank-$d$ vector bundles $\Fcal$ on $X\times S$ that are rank-$1$ locally free $(\pi_{E,\ast}\Ocal_{\widetilde X_E}\boxtimes \Ocal_S)$-modules, and the above morphism sends $\Fcal$ to $\Fcal\otimes_{\pi_{E,\ast}\Ocal_{\widetilde X_E}}\Bcal$.

    Note that the generic fibre of $\Tcal$ is a torus $T\coloneqq\Res_{E/F}\GG_m$, and $\iota$ maps $T$ to a maximal torus of $B^\times$. So given $\blambda' = (\lambda_i')_{i\in I}\in X_\ast(T)^I$ we get a (possibly empty) moduli stack $\Sht_{\Tcal,I_\bullet}^{\blambda'}$ of $\Tcal$-shtukas bounded by $\blambda'$, and $\iota_\ast$ induces a morphism
    \begin{equation}\label{eq:special-points}
        \iota_\ast\colon \begin{tikzcd}
            \Sht_{\Tcal,I_\bullet}^{\blambda'} \arrow[r]& \Sht_{\Bcal^\times,I_\bullet}^{\leqslant\blambda'_{\dom}}
        \end{tikzcd},
    \end{equation}
    where $\blambda'_{\dom} = (\lambda'_{i,\dom})_{i\in I}$ is the collection of the dominant representative of the Weyl group orbit of $\lambda'_i$'s. (Although $\Tcal$ may not be parahoric at places where $E/F$ ramifies, the construction of moduli stacks in \cite{ArastehRad-Hartl:LocGlShtuka, ArastehRad-Hartl:Uniformizing, Bieker:IntModels} is general enough to handle such a group.)

    Recall that we chose $E/F$ to be unramified and \emph{totally inert} at each place $y\in Y$, so $\iota$ sends $T_y$ to an \emph{elliptic} maximal torus of $B_y^\times$ for each $y\in Y$. Therefore, the image of $B(T_y)\to B(B_y^\times)$ is contained in the set of basic elements, and $\iota_\ast$ induces 
    \begin{equation}
        \begin{tikzcd}
            \Sht_{\Tcal,I_\bullet,\cl\ybf}^{\blambda'} \arrow[r, "\iota_\ast"]& \Sht_{\Bcal^\times,I_\bullet,\cl\ybf}^{\leqslant\blambda'_{\dom},\bsc}
        \end{tikzcd}.
    \end{equation}
    Hence, to prove the proposition it suffices to find $\blambda' = (\lambda'_i)_{i\in I}\in X_\ast(T)^I$ such that $\lambda'_{i,\dom}\leqslant\lambda_i$ for each $i$ and $\Sht_{\Tcal,I_\bullet,\cl\ybf}^{\blambda'}$ is \emph{non-empty}.
         
    We first claim that the fibre $\Sht_{\Tcal,I_\bullet,\cl\ybf}^{\blambda'}$ is non-empty if $\sum_{i\in I}\lambda'_i = \underline{0}$. This claim is clear if $\cl y_i = \cl y$ for all $i\in I$ and $I_\bullet = (I)$; in fact, 
    \[
    \big ((\cl y)^I, \Fcal_0 = \Fcal_1 = (\pi_{E,\ast}\Ocal_{\widetilde X_E})\otimes k, \varphi_1 = \id, \alpha=\id)
    \]
    defines an $\cl\FF_q$-point of $\Sht_{\Tcal,I_\bullet,(\cl y)^I}^{\blambda'}$. In general, $\Gr_{\Tcal,I_\bullet}\to X^I$ is \emph{ind-finite} and \emph{ind-flat} over $(X\setminus \Ram(E))^I$, where $\Ram(E)\subset X$ is the ramification locus for $E/F$ (\emph{cf.} \cite[Lemma~2.21]{Richarz:AffGr}), so the local model theorem \cite[Proposition~3.4.3]{Bieker:IntModels} shows that $\Sht_{\Tcal,I_\bullet}^{\blambda'}\to X^I$ is finite  flat over $(X\setminus \Ram(E))^I$. Hence, each fibre of 
    \[
    \begin{tikzcd}
        \Sht_{\Tcal,I_\bullet}^{\blambda'} |_{(X\setminus\Ram(E))^I} \arrow[r]& (X\setminus\Ram(E))^I
    \end{tikzcd}
    \]
    is non-empty if and only if $\Sht_{\Tcal,I_\bullet}^{\blambda'} |_{(X\setminus\Ram(E))^I}$ is non-empty. Since we showed that $ \Sht_{\Tcal,I_\bullet,(\cl y)^I}^{\blambda'}\ne\emptyset$ if $\sum_{i\in I}\lambda'_i = \underline{0}$, the desired claim follows as $Y\subset X\setminus \Ram(E)$.
    
    It now remains to construct $\blambda'\in X_\ast(T)^I$ such that $\sum_{i\in I}\lambda'_i = \underline{0}$ and $\lambda'_{i,\dom}\leqslant \lambda_i$ for each $i\in I$. Let $\lambda_{i}^{\min}$ denote the unique central or minuscule element that is less than or equal to $\lambda_i$; in other words, if $\lambda_i = (\lambda_{i,1},\cdots,\lambda_{i,d})$ then $\lambda_i^{\min} = (\lambda^{\min}_{i,1},\cdots,\lambda^{\min}_{i,d})$ satisfies $\lambda^{\min}_{i,1}-\lambda^{\min}_{i,d}\leqslant 1$ and $\sum_{j=1}^d\lambda^{\min}_{i,j} = \sum_{j=1}^d\lambda_{i,j}$. Set $\blambda^{\min}\coloneqq (\lambda_i^{\min})_{i\in I}$. 
    Then, since replacing $\blambda$ with $\blambda^{\min}$ does not modify the unique basic element in $B(B^\times_y,\lambda_y)$, $\Sht^{\leqslant\blambda,\bsc}_{\Bcal^\times, I_\bullet, \cl\ybf}$ is contained in $\Sht^{\leqslant\blambda^{\min}}_{\Bcal^\times, I_\bullet, \cl\ybf}$. So we may assume $\blambda = \blambda^{\min}$.

    For each $i\in I$, let $e_i$ denote the number of $j$ such that $\lambda_{i,j} = \lambda_{i,1} = \lambda_{i,d}+1$. Then  $d$ divides $\sum_{i\in I}e_i$
    since we have 
    \[\sum_{i\in I}\sum_{j=1}^d \lambda_{i,j}  = \sum_{i\in I}e_i + d\sum_{i\in I} \lambda_{i,d} = 0.\] 
    It remains to show that for each $i\in I$ there is $\lambda'_i\in X_\ast(T)$ in the Weyl group orbit of $\lambda_i$ such that $\sum_{i\in I}\lambda'_i = \underline 0$. Equivalently, for each
    \begin{equation}\label{eq:cowt-combinatorics-delta}
        \delta_i\coloneqq \lambda_i - \lambda_{i,d}\cdot(1,\cdots,1) = (\underbrace{1,\cdots,1}_{e_i\text{ times}},0,\cdots,0)
    \end{equation}
    with $0\leqslant e_i<d$ such that $\sum_{i\in I}e_i = d\cdot s$ for some non-negative integer $s$, we claim 
    \begin{equation}\label{eq:cowt-combinatorics-claim}
        \forall i\in I,\ \exists\ \epsilon_i\in X_\ast(T) \text{ such that }\epsilon_{i,\dom} = \delta_i \ \&\ \sum_{i\in I}\epsilon_i = s\cdot(1,\cdots,1).
    \end{equation}
    
   We reduce Claim \eqref{eq:cowt-combinatorics-claim} for $(\delta_i)_{i \in I}$ to that for another sequence $(\delta'_i)_{i \in I'}$ with $|I'| < |I|$, following the process outlined below:
    \begin{description}
        \item[Step 0] Remove any $\delta_i$ such that $\delta_i = \underline{0}$.
        \item[Step 1] Let $\delta_i'$ be the dominant representative of the Weyl group orbit of $ (1,\cdots,1)-\delta_i$. Since Claim \eqref{eq:cowt-combinatorics-claim} for $(\delta_i)_{i \in I}$ and $(\delta_i')_{i \in I}$ are equivalent, we may assume $s \leq |I|/2$ by replacing $(\delta_i)_{i \in I}$ with $(\delta_i')_{i \in I}$ if necessary.
        \item[Step 2] If $e_{i_1} + e_{i_2}<d$  for some distinct $i_1,i_2\in I$, then we define $(\delta'_i)_{i\in I\setminus\{i_2\}}$ by 
        $\delta'_i = \delta_i$ for $i\ne i_1,i_2$ and 
        \[
        \delta'_{i_1}\coloneqq  (\underbrace{1,\cdots,1}_{e_{i_1}\text{ times}},\underbrace{1,\cdots,1}_{e_{i_2}\text{ times}},0,\cdots,0).
        \]
        Since Claim \eqref{eq:cowt-combinatorics-claim} for $(\delta'_i)_{i\in I\setminus\{i_2\}}$  implies Claim \eqref{eq:cowt-combinatorics-claim} for $(\delta_i)_{i\in I}$, we replace $(\delta_i)_{i\in I}$ with $(\delta'_i)_{i\in I\setminus\{i_2\}}$.
         \item[Step 3] If $e_{i_1}+e_{i_2}= d$ for some distinct $i_1,i_2\in I$, then Claim \eqref{eq:cowt-combinatorics-claim} for $(\delta_i)_{i\in I\setminus\{i_1,i_2\}}$ implies Claim \eqref{eq:cowt-combinatorics-claim} for $(\delta_i)_{i\in I}$, since we may set $\epsilon_{i_1}=\delta_{i_1}$ and $\epsilon_{i_2} = (1,\cdots,1)-\epsilon_{i_1}$. Thus, we remove $\delta_{i_1}$ and $\delta_{i_2}$.
        \item[Step 4] Repeat \textbf{Steps 2} and \textbf{3}, and terminate the process if no non-zero $\delta_i$ remains. Otherwise, we may assume $e_i+e_{i'}>d$ for all distinct $i,i'\in I$, which clearly implies $s>|I|/2$. In this case, return to \textbf{Step 1}.
    \end{description}
    This process should terminate, and thus the proof of Claim \eqref{eq:cowt-combinatorics-claim} is reduced to the trivial case where $\delta_i=\underline0$ for all $i$. 
    
    Returning to the original setting where $(\delta_i)_{i\in I}$ are given as in \eqref{eq:cowt-combinatorics-delta}, we set 
    \[\lambda_i'\coloneqq \lambda_{i,d}\cdot(1,\cdots,1) + \epsilon_i,\]
    where $(\epsilon_i)_{i\in I}$ are given by Claim~\ref{eq:cowt-combinatorics-claim}. Clearly, $\lambda_i'$ is in the Weyl group orbit of $\lambda_i$ for each $i\in I$, and we have $\sum_{i\in I}\lambda_i' = \underline 0$ as desired.
\end{proof}

\begin{cor}\label{cor:nonempty-KR}
    Suppose that $\sum_{i\in I}\deg(\lambda_i) = 0$. Choose a finite non-empty subset $Y\subset X$ such that $\Bcal_y$ is an Iwahori order for any $y\in Y$, and we choose a partition $\bigsqcup_{y\in Y}\bigsqcup_{\cl y|y}I_{\cl y} \subset I$, which defines $\cl\ybf$. 
    
    Then $\Sht^{\leqslant\blambda_J,\widetilde\wbf}_{\Bcal^\times,(I),\cl\ybf}$ is non-empty for any $\widetilde\wbf\in\prod_{\cl y, \text{ s.t. }I_{\cl y}\ne\emptyset}\Adm_{\Gcal,\cl y}(\lambda_{\cl y})$.
\end{cor}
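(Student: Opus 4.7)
The plan has two steps. First, I would establish non-emptiness of the basic KR stratum by combining Proposition~\ref{prop:non-empty-basic-stratum} with a specialisation argument through the pro-smooth local model diagram~\eqref{eq:loc-mod}. Second, I would propagate this non-emptiness to every admissible KR stratum using openness of the local model morphism together with the Bruhat-order structure on the admissible set.

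For the first step, Proposition~\ref{prop:non-empty-basic-stratum} (applicable since $\sum_i \deg(\lambda_i) = 0$) gives that the basic Newton stratum $\Sht^{\leqslant\blambda,\bsc}_{\Bcal^\times,(I),\cl\ybf}$ is non-empty, and since KR strata partition $\Sht^{\leqslant\blambda_J}_{\Bcal^\times,(I),\cl\ybf}$, some KR stratum $\Sht^{\leqslant\blambda_J,\widetilde\wbf_0}_{\Bcal^\times,(I),\cl\ybf}$ contains an $\cl\FF_q$-point $x$. The local model diagram furnishes an \'etale chart of $\Sht^{\leqslant\blambda_J}_{\Bcal^\times,(I),\cl\ybf}$ near $x$ that maps smoothly onto an open neighbourhood of the image $\bar x \in \prod_{\cl y}\Scal^{\widetilde w_{0,\cl y}}$ in the BD Schubert variety. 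Inside the projective closure $\overline{\prod_{\cl y}\Scal^{\widetilde w_{0,\cl y}}}$ the basic cell $\prod_{\cl y}\Scal^{\widetilde w_{\cl y,\bsc}}$ is a non-empty $0$-dimensional closed subscheme, since $\widetilde\wbf_{\bsc}$ is the coordinate-wise Bruhat minimum of $\prod_{\cl y,\,I_{\cl y}\ne\emptyset}\Adm_{\Gcal,\cl y}(\lambda_{\cl y})$. Choosing a curve in this closure from $\bar x$ to a point of the basic cell and lifting it through the smooth chart via Hensel's lemma produces an $\cl\FF_q$-point of the basic KR stratum $\Sht^{\widetilde\wbf_{\bsc}}_{\Bcal^\times,(I),\cl\ybf}$.

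For the second step, the local model morphism from~\eqref{eq:loc-mod} is pro-smooth hence open, and KR strata on the source are by definition preimages of Schubert cells on the target. By the previous paragraph, the image of this morphism contains the basic Schubert cell. Since $\widetilde\wbf_{\bsc}$ is the Bruhat minimum of $\prod_{\cl y,\,I_{\cl y}\ne\emptyset}\Adm_{\Gcal,\cl y}(\lambda_{\cl y})$, every admissible Schubert cell $\prod_{\cl y}\Scal^{\widetilde w_{\cl y}}$ contains the basic cell in its closure; equivalently, every point of the basic cell is a limit point of each admissible Schubert cell, so any open neighbourhood of the basic cell meets every admissible Schubert cell. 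Consequently the open image of the local model morphism meets every admissible cell, and pulling back shows that $\Sht^{\leqslant\blambda_J,\widetilde\wbf}_{\Bcal^\times,(I),\cl\ybf}$ is non-empty for every $\widetilde\wbf \in \prod_{\cl y,\,I_{\cl y}\ne\emptyset}\Adm_{\Gcal,\cl y}(\lambda_{\cl y})$.

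The principal obstacle is the curve-lifting in Step~1. The local model morphism is only pro-smooth, so to apply Hensel's lemma one must first pass to a genuinely smooth finite-dimensional presentation --- for instance, by replacing $L^+_y\Bcal^\times$ with a finite-dimensional quotient acting through its action on the bounded Schubert subvariety under consideration. With such a presentation in place, lifting an $\cl\FF_q$-point of the basic cell (and even lifting the connecting curve) follows from the standard infinitesimal smooth-lifting property, producing the required point of the basic KR stratum. An alternative fall-back, should the curve-lifting prove delicate, is to refine the torus construction in the proof of Proposition~\ref{prop:non-empty-basic-stratum} so that the partial sums $\sum_{i\in I_{\cl y}}\lambda'_i$ individually lift to length-$0$ elements of $\widetilde W_{\Bcal^\times_y}$, yielding an $\cl\FF_q$-point of the basic KR stratum by explicit construction.
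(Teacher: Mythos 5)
Your Step~2 --- using openness of the (pro-)smooth local model morphism \eqref{eq:loc-mod} together with the fact that $\widetilde\wbf_{\bsc}$ is the Bruhat minimum of $\prod_{\cl y}\Adm_{\Gcal,\cl y}(\lambda_{\cl y})$ to reduce everything to non-emptiness of the basic Kottwitz--Rapoport stratum --- is exactly the reduction made in the paper. The gap is in Step~1, and it is a genuine one. A smooth morphism is open and satisfies the infinitesimal lifting property: you may lift a family over a complete local base when the lift is prescribed over the \emph{closed} point. Your curve-lifting asks for the opposite: you have a point of $\Sht^{\leqslant\blambda}_{\Bcal^\times,(I),\cl\ybf}$ over $\bar x$, a point of the cell $\prod_{\cl y}\Scal^{\widetilde w_{0,\cl y}}$, and you want to extend a specialisation from $\bar x$ into the closed basic cell upstairs. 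That is the valuative criterion of \emph{properness} for the local model morphism, which it does not satisfy; Hensel's lemma and formal smoothness give no control here. Openness only tells you the image is an open, $\Lcal^+$-stable union of strata, and nothing prevents that image from being, a priori, the complement of the closed basic cell. This is precisely why non-emptiness of the minimal KR stratum is a substantive axiom in the He--Rapoport style framework (and why \cite[\S5.2]{Breutmann:Thesis} calls its argument an ``incomplete idea of proof'') rather than a formal consequence of non-emptiness of the moduli stack.

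The paper closes this gap by a different mechanism: starting from a point of the \emph{basic Newton} stratum (Proposition~\ref{prop:non-empty-basic-stratum}), it modifies the shtuka at each $y\in Y$ via Beauville--Laszlo descent using a point of the affine Deligne--Lusztig variety $X_{\widetilde\wbf_{y,\bsc}}(\widetilde b_y)$, and the non-emptiness of this ADLV is \cite[Theorem~4.5]{He:GeomADL}, applicable because length-zero elements are $\delta$-straight. Note that basicness of $[\widetilde b_y]$ is essential there: for a non-basic class the ADLV of the length-zero element is empty. Your argument uses the basic Newton stratum only to produce \emph{some} point and never exploits basicness again, which is a symptom of the missing ingredient --- one must change the shtuka within its isogeny class at $y$, not merely move in the local model. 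Your fall-back (rigging the torus construction so the local relative positions are already length zero) is not clearly workable either: the pushforward of a $\Tcal$-shtuka has relative position at $\cl y$ of translation type $t^{\lambda'}$ with $\lambda'=\sum_{i\in I_{\cl y}}\lambda_i'$, and such an element has length zero only when $\lambda'$ is central, which cannot be arranged in general (for instance when $I_{\cl y}$ is a singleton with $\lambda_i$ non-central).
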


\begin{proof}
    Without loss of generality, we may assume $J=\emptyset$. By pro-smoothness of the local model diagram \eqref{eq:loc-mod}, it suffices to show that the \emph{basic} Kottwitz--Rapoport stratum $\Sht^{\widetilde\wbf_{\bsc}}_{\Bcal\times,(I),\cl\ybf}$ is non-empty, which can be proven following \cite[\S5.2, Axiom~5, (Step~3)]{Breutmann:Thesis}. To explain, choose $(\Pucal,\alpha)\in \Sht^{\leqslant\blambda,\bsc}_{\Bcal^\times,(I),\cl\ybf}(\cl\FF_q)$, which is possible by Proposition~\ref{prop:non-empty-basic-stratum}. We want to show that $(\Pucal,\alpha)$ can be ``modified'' at each $y\in Y$ (via the Beauville--Laszlo descent) to lie in $\Sht^{\widetilde\wbf_{\bsc}}_{\Bcal^\times,(I),\cl\ybf}$. 

    Let us use the notation from the proof of Proposition~\ref{prop:Adm}.
    Let $\widetilde b_y\in G(\breve F\otimes_F F_y)$ be the element associated to the localisation of $(\Pucal,\alpha)$ at $y$. Then it suffices to show that the affine Deligne--Lusztig variety $X_{\widetilde\wbf_{y,\bsc}}(\widetilde b_y)$ is non-empty, where $\widetilde\wbf_{y,\bsc}$ is the unique length-$0$ $\widetilde\lambda_y$-admissible element in the Iwahori--Weyl group for $G(\breve F\otimes_F F_y)$. Since length-$0$ elements are $\delta$-straight in the sense of \cite[\S2.4]{He:GeomADL}, the non-emptiness of  $X_{\widetilde\wbf_{y,\bsc}}(\widetilde b_y)$ follows from \cite[Theorem~4.5]{He:GeomADL}.
\end{proof}

Now, we can show the non-emptiness of Newton strata.

\begin{cor}\label{cor:nonempty-Newton}
    Fix $Y$ and $(I_{\cl y})$ as before, and suppose that we have $\sum_{i\in I}\deg(\lambda_i) = 0$, and $\bbf_Y=\big(N([\widetilde b_y])\big)_{y\in Y}$ for some $[\widetilde b_y]\in B(\widetilde G_y,\widetilde \lambda_y)$, using the notation in the proof of Proposition~\ref{prop:Adm}. Then the Newton stratum $\Sht^{\leqslant\blambda,\bbf_Y}_{\Bcal^\times,I_\bullet,\cl\ybf}$ is non-empty.
\end{cor}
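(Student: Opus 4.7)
The plan is to reduce to the case of Iwahori integral models, where Corollaries~\ref{cor:straight-KR} and~\ref{cor:nonempty-KR} apply directly. First, I would use Corollary~\ref{cor:joining-modifications} to replace $I_\bullet$ by the trivial partition $(I)$: the ``joining modifications'' morphism
\[
\Sht^{\leqslant\blambda}_{\Bcal^\times,I_\bullet,\cl\ybf} \longrightarrow \Sht^{\leqslant\blambda}_{\Bcal^\times,(I),\cl\ybf}
\]
is proper and surjective, and by Construction~\ref{constr:loc-G-isosht} the universal local $G_y$-isoshtuka on the source is pulled back from the target. Thus the two sides share the same Newton stratification, and the non-emptiness of a Newton stratum on one side is equivalent to that on the other. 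We may therefore assume $I_\bullet = (I)$.

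Next, I would change the parahoric integral model at each $y\in Y$. Choose a hereditary $\Ocal_X$-order $\Bcal' \subseteq \Bcal$ such that $\Bcal'_y$ is an Iwahori order for every $y\in Y$ and $\Bcal'_x = \Bcal_x$ otherwise; such $\Bcal'$ is obtained by picking, for each $y\in Y$, an Iwahori suborder $\Bcal'_y \subseteq \Bcal_y$ (which exists since every parahoric of $G_y$ contains an Iwahori), and then glueing these with $\Bcal|_{X\setminus Y}$ via the Beauville--Laszlo descent lemma. The resulting inclusion $\iota\colon \Bcal'^\times \hookrightarrow \Bcal^\times$ is an isomorphism over $X\setminus Y$, so Theorem~\ref{th:change-parahorics} produces a proper surjective morphism $\Sht^{\leqslant\blambda}_{\Bcal'^\times,(I),\cl\ybf} \twoheadrightarrow \Sht^{\leqslant\blambda}_{\Bcal^\times,(I),\cl\ybf}$. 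Because the local $G_y$-isoshtuka attached to a $\Bcal$-shtuka depends only on the generic fibre $G_y = B_y^\times$ and not on the chosen integral model, this morphism preserves Newton strata together with their invariants $\bbf_Y$. Hence it suffices to prove non-emptiness for $\Bcal'$, i.e., we may assume that $\Bcal_y$ is Iwahori for every $y\in Y$.

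In this Iwahori setting, the hypothesis $\bbf_Y = (N([\widetilde b_y]))_{y\in Y}$ with $[\widetilde b_y] \in B(\widetilde G_y,\widetilde\lambda_y)$ lets me invoke Corollary~\ref{cor:straight-KR}: for each $y\in Y$ I would choose a $\delta$-straight $\widetilde\lambda_y$-admissible representative of $[\widetilde b_y]$, assembling a tuple $\widetilde\wbf$ such that
\[
\Sht^{\widetilde\wbf}_{\Bcal^\times,(I),\cl\ybf} \;\subseteq\; \Sht^{\leqslant\blambda,\bbf_Y}_{\Bcal^\times,(I),\cl\ybf}.
\]
Since $\sum_{i\in I}\deg(\lambda_i) = 0$ and each $\Bcal_y$ is Iwahori, Corollary~\ref{cor:nonempty-KR} guarantees that $\Sht^{\widetilde\wbf}_{\Bcal^\times,(I),\cl\ybf}$ is non-empty, which completes the argument. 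The conceptual content is entirely concentrated in Corollaries~\ref{cor:straight-KR} and~\ref{cor:nonempty-KR}; once these are available, the only thing to verify is that both ``joining modifications'' and the parahoric change-of-model preserve Newton invariants, and this is immediate from the definition of the local $G_y$-isoshtuka. I therefore expect no genuine obstacle beyond setting up the two reductions above.
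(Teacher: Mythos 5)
Your proposal is correct and follows essentially the same route as the paper: reduce via the ``joining modifications'' map and a change of parahoric model (Theorem~\ref{th:change-parahorics}) to the case $I_\bullet=(I)$ with $\Bcal_y$ Iwahori at each $y\in Y$, using that Newton strata are insensitive to these maps, and then conclude from Corollaries~\ref{cor:straight-KR} and~\ref{cor:nonempty-KR}. The only difference is that you spell out the Beauville--Laszlo construction of the Iwahori suborder, which the paper leaves implicit.
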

\begin{proof}
    We may assume that $J=\emptyset$ without loss of generality.
    Since the Newton stratification is invariant under ``change of parahorics'' maps and ``joining modification'' maps, which are surjective by Theorem~\ref{th:change-parahorics} and Corollary~\ref{cor:joining-modifications}, we may also assume that $\Bcal_y$ is an Iwahori order for any $y\in Y$ and $I_\bullet = (I)$. Now the corollary follows from Corollaries~\ref{cor:nonempty-KR} and \ref{cor:straight-KR}.
\end{proof}
\begin{rmk}
    If $\Bcal$ is a hereditary $\Ocal_X$-order that is Iwahori at each $y\in Y$ and $I_\bullet = (I)$, then it is conjectured that the basic Kottwitz--Rapoport stratum $\Sht^{\widetilde\wbf_{\bsc}}_{\Bcal^\times,(I),\cl\ybf}$ intersects with each connected component of $\Sht^{\leqslant\blambda}_{\Bcal^\times,(I),\cl\ybf}$. See the strong version of Axiom~5 in \cite[\S5.1]{Breutmann:Thesis}, which is inspired by \cite[\S3, Axiom~5]{HeRapoport:EKOR}. For this, we may need to explicitly describe the connected components of $\Sht^{\leqslant\blambda}_{\Bcal^\times,I_\bullet}$, and we are not aware if such a result is known for arbitrary choice of $\blambda$.
\end{rmk}

\section{Review of $(D,\varphi)$-spaces and Dieudonn\'e $D_x$-modules}\label{sec-isosht}
We fix a central division algebra $D$ over $F$ with dimension $d^2$ for the rest of the paper. In this section, we review basic properties of \emph{($D,\varphi)$-spaces} (respectively, \emph{Dieudonn\'e $D_x$-modules} for any $x\in |X|$), which are isostukas with right $D$-action (respectively,  local isostukas with right $D_x$-action) over $\cl\FF_q$, which can be found in \cite{LaumonRapoportStuhler}. Then we prove the key numerical inequality (Lemma~\ref{lemma-Lau-bound-for-degree-of-D-varphi-space-outside-split-places}), which plays an important role in the proof of the main result in \S\ref{sec-valcrit}.

As explained in \S\ref{ssec-CSA} and Remark~\ref{rmk-shtuka-via-module}, we will freely switch between $\Dcal^\times$-bundles over $S$ and rank-$1$ locally free $\Dcal_{X\times S}$-modules in defining $\Dcal^\times$-shtukas. However, we prefer the module perspective, as we should work with $\Dcal_{X\times S}$-submodules and $\Dcal$-stable subshtukas (Definition~\ref{def:D-subshtukas}).

\subsection{Isogeny classes and localisations for $\Gcal$-shtukas}\label{ssec-isoshtukas}
We start with the general setup. For a connected reductive group $G$ over $F$, a \emph{$G$-isoshtuka} over an $\FF_q$-scheme $S$ is defined to be a pair
$(P,\varphi)$ of a $G$-torsor $P$ over $\Spec F\times S$ and an isomorphism $\varphi\colon \ltau P \to P$ of $G$-torsors, where $\ltau(-)$ is the pullback by $\tau = \id_F\times\Frob_{q}$; see \cite[p1]{HamacherKim:Gisoc} when $S=\Spec\cl\FF_q$. Now, to a $\Gcal$-shtuka $(\Pucal,\alpha)\in\Sht_{\Gcal,I_\bullet}(S)$ (with $\Gcal$ as in \S\ref{ssec:groups}, and $\Pucal$ as in Definitin~\ref{def:Hecke}), we associate a $G$-isoshtuka $(P,\varphi)$ over $S$ by setting 
\begin{equation}\label{eq:varphi}
     P \coloneqq \Pcal_0|_{\eta\times S}\quad\text{and}\quad
     \varphi\coloneqq (\varphi_1|_{\eta\times S})^{-1}\circ\cdots\circ (\varphi_r|_{\eta\times S})^{-1} \circ (\alpha|_{\eta\times S}) ,
\end{equation}
where $\eta=\Spec F$ is the generic point of $X$. One can view the associated $G$-isoshtuka $(P,\alpha)$ as the ``(global) isogeny class'' of the $\Gcal$-shtuka $(\Pucal,\alpha)$. Clearly, the construction $(P,\alpha)$ depends only on 
\begin{equation}
    ((x_i)_{i\in S}, (\Pcal_0,\Pcal_r), \varphi_r\circ\cdots\circ\varphi_1,\alpha)\in \Sht_{\Gcal,(I)}(S);
\end{equation}
i.e., the image of $(\Pucal,\alpha)$ by the ``joining modification'' map in Corollary~\ref{cor:joining-modifications}.

Now, let $S$ be an $\cl\FF_q$-scheme, and fix $Y$ and $\cl\ybf$ as in \eqref{eq:ybf}. Then to a $\Gcal$-shtuka $(\Pucal,\alpha)\in\Sht_{\Gcal,(I),\cl\ybf}(S)$, defined below \eqref{eq:Gr-ybf}, we associate $G_y$-isoshtuka $(\Lcal_y\Pcal_0,\varphi_y)$ on $S$ by pulling back the universal $G_y$-isoshtuka \eqref{eq:univ-loc-isosht}. Then clearly, $(\Lcal_y\Pcal_0,\varphi_y)$ only depends on the $G$-isoshtuka $(P,\varphi)$ associated to $(\Pucal,\alpha)$.

Let us now consider $\Gcal = \Dcal^\times$ for a hereditary $\Ocal_X$-order of $D$, where $D$ is a central division algebra over $F$ of dimension $d^2$. Set $\breve D\coloneqq D\otimes_F \breve F$. Then using \eqref{eq:torsor-module}, giving a $D^\times$-isoshtuka is equivalent to giving a rank-$1$ free right $\breve D$-module  $V$ together with a bijective $\tau$-linear and $D$-linear endomorphism $\varphi\colon V\to V$. Similarly, by setting $\breve D_x\coloneqq D\otimes_F \breve F_x$, giving a local $\Dcal_x^\times$-shtuka is equivalent to giving a rank-$1$ free right $\breve D_x$-module $V_x$ with a bijective $\tau_x$-linear and $D_x$-linear endomorphism $\varphi_x\colon V_x\to V_x$. These naturally motivate the definitions of $(D,\varphi)$-spaces (Definition~\ref{def:D-varphi-space}), Dieudonn\'e $D_x$
-modules (Definition~\ref{def:Dieudonne-D-x-module}) and localisations (Definition~\ref{def:localisation}).

\begin{defn}[{\emph{cf.} \cite[p~363]{Lau:Degeneration}}]\label{def:D-varphi-space}
    A \emph{$(D,\varphi)$-space} is a tuple $(V,\varphi,\iota)$ where 
\begin{enumerate}
  \item $V$ is a finite dimensional $\breve F$-vector space,
  \item $\varphi : V\to V$ is a bijective $\tau$-linear homomorphism,
  \item $\iota: D^{\op}\to \End(V,\varphi)$ is a morphism of $F$-algebras (where elements of $\End(V,\varphi)$ are $\breve F$-linear maps $V\to V$ respecting $\varphi$).
\end{enumerate}
\end{defn}
The construction outlined in \S\ref{ssec-isoshtukas} associates to a $\Dcal^\times$-shtuka over $\cl\FF_q$ a $(D,\varphi)$-space whose underlying $\breve D$-module is free of rank~$1$, though we also need to consider $(D,\varphi)$-spaces whose underlying $\breve D$-module is not free. (Note that $\breve D\cong M_d(\breve F)$.) For example, if $(\Fucal,\alpha)$ is a proper $\Dcal$-stable subshtuka of a $\Dcal^\times$-shtuka $(\Eucal,\alpha)$, then the generic fibre $W$ of $\Fcal_0$ is a $\varphi$-stable right $\breve D$-submodule so $(W,\varphi|_W)$ is a proper $(D,\varphi)$-subspace of $(V,\varphi)$.

When $D=F$,  Drinfeld \cite{Drinfeld:PeterssonsConjecture} showed that the category of $(D,\varphi)$-spaces turns out to be semisimple, and classified simple $(F,\varphi)$-modules. Let us recall the classification result, following \cite[Theorem~(A.6)]{LaumonRapoportStuhler}. 
\begin{defn}[{\cite[p.~32]{Drinfeld:PeterssonsConjecture}}]
  \label{defn:Pi-V-phi-space}
  Let $(V,\varphi)$ be an $(F,\varphi)$-space. Then there exists a $\varphi$-stable finite-dimensional  $F\otimes \FF_{q^n}$-subspace $V_0$ of $V$ containing an $\breve F$-basis. Set $\varphi_0 \coloneqq\varphi|_{V_0}$, and observe that $\varphi_0^n$ is an $F\otimes \FF_{q^n}$-linear endomorphism of $V_0$ that commutes with $\varphi_0$. Let $\pi_0\in\End_{(F,\varphi)}(V,\varphi)$ be the endomorphism obtained by extending scalars for  $\varphi_0^n$, and define the following commutative $F$-subalgebra
  \begin{equation}
       L_{(V,\varphi)} \coloneqq \bigcap_{N=1}^\infty F[\pi_0^N] \subseteq \End_{(F,\varphi)}(V,\varphi).
  \end{equation}
  Since each term in the above intersection is finite-dimensional over $F$, there exists $N_0\gg1$ such that $L_{(V,\varphi)} = F[\pi_0^{N_0}]$. Furthermore, $L_{(V,\varphi)}$ is a finite \'etale $F$-algebra since $L_{(V,\varphi)} = F[\pi_0^{N_0p^i}]$ for any $i\geq0$.
  
Now, set
  \begin{equation}
      \Pi_{(V,\varphi)} \coloneqq \pi_0^{N_0}\otimes (1/nN_0) \in (L_{(V,\varphi)})^{\times}\otimes_{\ZZ}\QQ.
  \end{equation}
  Conceptually, $\Pi_{(V,\varphi)}$ can be thought of as $(\pi_0^{N_0})^{1/nN_0} = \pi_0^{1/n}$. Clearly, both $L_{(V,\varphi)}$ and $\Pi_{(V,\varphi)}$ are independent of the choices of $N_0$, $n$ and $V_0$.
\end{defn}

For any finite extension $L/F$ and a place $y$ of $L$, we define a $\QQ$-linear function
\begin{equation}\label{eq:deg-y}
    \deg_y\colon L^\times\otimes_\ZZ\QQ \to \QQ
\end{equation}
by $\QQ$-linearly extending $[\kappa_y:\FF_q]\cdot v_y$, where $v_y$ is the normalised valuation sending a uniformiser at $y$ to $1$.
Let us now recall the description of simple $(F,\varphi)$-spaces.
\begin{thm}[Drinfeld; {\emph{cf.} \cite[Theorem~(A.6)]{LaumonRapoportStuhler}}]\label{th:classif-simple-isosht}
    For each $(L,\Pi)$ where $L$ is a finite separable extension of $F$ and $\Pi\in L^\times\otimes_\ZZ\QQ$, there exists a unique simple $(F,\varphi)$-space $(V,\varphi)$ up to isomorphism such that $(L_{(V,\varphi)},\Pi_{(V,\varphi)}) \cong (L,\Pi)$. 
    
    Furthermore, for such $(V,\varphi)$ we have
    \[
          \dim_{\breve F}(V) = [L:F]\cdot d(\Pi)
    \]
    where $d(\Pi)$ is the lowest common denominator of $\{\deg_y(\Pi)\}_y$ with $y$ running through all places of $L$. Lastly, $\End_{(F,\varphi)}(V,\varphi)$ is a central division algebra over $L$ with index $d(\Pi)$, and its local invariant at a place $y$ of $L$ is $-\deg_y(\Pi)\bmod\ZZ$.
\end{thm}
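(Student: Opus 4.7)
The plan is to combine a Dieudonné--Manin style classification of $\tau$-linear modules over $\breve F$ with a careful analysis of the commutative subalgebra $L_{(V,\varphi)}$ inside the endomorphism algebra of a simple $(F,\varphi)$-space. First I would establish that the category of $(F,\varphi)$-spaces is abelian and semisimple. Let $\breve F\{\varphi\}$ denote the twisted polynomial ring with commutation rule $\varphi\cdot a = \tau(a)\cdot\varphi$ for $a\in\breve F$; it is a (left and right) principal ideal domain, so every finitely generated torsion module decomposes into a direct sum of simples, giving semisimplicity. For a simple $(V,\varphi)$, Schur's lemma then shows that $\End_{(F,\varphi)}(V,\varphi)$ is a division algebra over $F$. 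One verifies directly that the commutative subalgebra $L_{(V,\varphi)}$ of Definition~\ref{defn:Pi-V-phi-space} is a subfield; indeed it is the centre of $\End_{(F,\varphi)}(V,\varphi)$.

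For existence, given $(L,\Pi)$ I would choose integers $n, N_0$ so that $\FF_{q^n}$ lies inside the constant field of $L$ and $\pi_0 \coloneqq \Pi^{nN_0} \in L^\times$. Setting $d = d(\Pi)$, form the left module
\[
V \coloneqq \bigl(L \otimes_F \breve F\{\varphi\}\bigr) / \bigl(\varphi^{nN_0 d} - \pi_0\bigr),
\]
on which $L$ acts through the left factor. A direct computation with the normal form $\sum_{0\leq i< nN_0 d} \ell_i\otimes\varphi^i$ gives $\dim_{\breve F} V = [L:F]\cdot d(\Pi)$, and the natural inclusion $L\hookrightarrow \End_{(F,\varphi)}(V,\varphi)$ realises $L$ as $L_{(V,\varphi)}$ with $\Pi_{(V,\varphi)} = \Pi$. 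Simplicity follows from showing that any $\varphi$-stable $\breve F$-subspace is automatically $L$-stable, combined with irreducibility of the resulting $L$-module. Uniqueness given $(L,\Pi)$ is then straightforward: any simple $(F,\varphi)$-space with these invariants admits a nonzero $(F,\varphi)$-map from the model above by sending a generator to a generator, and this map must be an isomorphism by simplicity and a dimension count.

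The main obstacle is computing the local invariants of $D \coloneqq \End_{(F,\varphi)}(V,\varphi)$ as a central simple $L$-algebra. For each place $y$ of $L$ I would localise $(V,\varphi)$ at $y$ to obtain a Dieudonné $D_y$-module in the sense of Section~\ref{sec-isosht}, and invoke the local slope classification: the Newton slope of this local object at $y$ should equal $\deg_y(\Pi)\bmod\ZZ$. The centraliser of a simple local isoshtuka of slope $\lambda$ inside $\End_{\breve F_y}$ of its underlying vector space is a local central division algebra with Hasse invariant $-\lambda\bmod\ZZ$, the minus sign arising because the centraliser acts from the side opposite to $\varphi$. Piecing these local computations together globally yields the stated Hasse invariants. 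I expect the delicate point throughout to be matching sign conventions and the normalisation of $\deg_y$, in particular tracking the interaction between the global $\Pi$ and its local manifestations, so that the global residue formula $\sum_y \inv_y(D) = 0$ in $\QQ/\ZZ$ is consistent with $\sum_y \deg_y(\Pi) = 0$.
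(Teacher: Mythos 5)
The paper does not actually prove this statement: it is quoted as Drinfeld's classification, with a reference to \cite{LaumonRapoportStuhler}, Theorem~(A.6). So your sketch would have to stand as a self-contained proof of Drinfeld's theorem, and as written it has genuine gaps. The first is the semisimplicity step: being a (twisted) principal ideal domain does \emph{not} make finitely generated torsion modules direct sums of simples (already $k[x]/(x^2)$ over $k[x]$ fails), so ``$\breve F\{\varphi\}$ is a PID, hence the category is semisimple'' is not an argument. Semisimplicity of $(F,\varphi)$-spaces genuinely uses that the constant field $\cl\FF_q$ is algebraically closed (a Lang-type surjectivity statement killing all extensions), and likewise the identification of $L_{(V,\varphi)}$ with the centre of $\End_{(F,\varphi)}(V,\varphi)$ is part of the content of the theorem rather than something one ``verifies directly''.

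The second gap is the existence construction. With the relation $\varphi^{nN_0d}-\pi_0$ the cyclic module has invariant ``$\pi_0^{1/(nN_0d)}$'', i.e. $\frac{1}{d}\Pi$ rather than $\Pi$ (the exponent should be $nN_0$); and for any exponent $M$ the quotient $(L\otimes_F\breve F\{\varphi\})/(\varphi^{M}-\pi_0)$ is free over $L\otimes_F\breve F$ on $1,\varphi,\dots,\varphi^{M-1}$, so its $\breve F$-dimension is $[L:F]\cdot M$, not $[L:F]\cdot d(\Pi)$. It is also not simple in general: for $L=F$, $\Pi=t\otimes 1$, $n=1$, $N_0=2$, $\pi_0=t^2$, the module $\breve F\{\varphi\}/(\varphi^2-t^2)$ is two copies of a one-dimensional object, so both the simplicity claim and the dimension count collapse (and a $\varphi$-stable subspace being automatically $L$-stable is false here). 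What is actually needed --- that this model is isotypic with simple constituent of dimension exactly $[L:F]\,d(\Pi)$, equivalently the dimension formula together with the fact that $\End_{(F,\varphi)}(V,\varphi)$ is central over $L$ of index $d(\Pi)$ --- is precisely the hard part of the theorem, and it cannot be extracted from local centraliser computations alone: one needs global inputs, e.g.\ Tsen's theorem ($\Br(\breve F)=0$) and a double-centraliser argument to control $\dim_{\breve F}V$, and the local--global theory of the Brauer group of $L$ (invariants determine the index, product formula) to pass from the local invariants $-\deg_y(\Pi)$ to the index $d(\Pi)$. Finally, the uniqueness step ``send a generator to a generator'' presupposes that on an arbitrary simple space with invariants $(L,\Pi)$ the operator $\varphi^{nN_0}$ acts as $\pi_0$ for your chosen pair $(n,N_0)$, which itself requires an argument.
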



The semisimplicity of $(F,\varphi)$-spaces immediately implies the semisimplicity of $(D,\varphi)$-spaces for any central division algebra $D$ over $F$. Furthermore, one can obtain the following result on simple $(D,\varphi)$-spaces.
\begin{cor}[{\emph{Cf.} \cite[Corollary~6.2.3]{Lau:Thesis}}]
  \label{cor:properties-D-varphi-space}
  Let $(V,\varphi,\iota)$ be a simple $(D,\varphi)$-space. Then $L_{(V,\varphi)}$ is a field, and as a $(F,\varphi)$-space  $(V,\varphi)$ is a direct sum of copies of the simple $(F,\varphi)$-space $(W,\varphi')$ corresponding to $(L_{(V,\varphi)},\Pi_{(V,\varphi)})$ via Theorem~\ref{th:classif-simple-isosht}. 
  
  Furthermore, the following properties hold.
  \begin{enumerate}
    \item The endomorphism ring $\Delta = \End_{(D,\varphi)} ( V,\varphi,\iota)$ is a central division algebra over $L_{(V,\varphi)}$, and for each place $y$ of $L_{(V,\varphi)}$ we have
    \[
    \inv_y(\Delta) =  \inv_y (D\otimes_F L_{(V,\varphi)}) -\deg_y(\Pi_{(V,\varphi)}) \in \QQ/\ZZ
    \]
    \item The multiplicity of $(W,\varphi_W)$ in $(V,\varphi)$ is $d\cdot d(\Delta) / d(\Pi_{(V,\varphi)})$, where $d(\Delta)$ is the index of $\Delta$ over $L_{(V,\varphi)}$. In particular, we have 
    \[
    \dim_{\breve F} V = d\cdot [L_{(V,\varphi)} : F] \cdot d(\Delta).
    \]
  \end{enumerate}
\end{cor}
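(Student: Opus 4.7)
The approach is to first reduce to Drinfeld's classification of simple $(F,\varphi)$-spaces (Theorem~\ref{th:classif-simple-isosht}), then apply the double centralizer theorem for central simple algebras.

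I would begin by showing that $(V,\varphi)$ is $(F,\varphi)$-isotypic. Since the category of $(F,\varphi)$-spaces is semisimple (by Drinfeld), we have a canonical isotypic decomposition $V = \bigoplus_{W} V_W$ indexed by isomorphism classes of simple $(F,\varphi)$-spaces; every $(F,\varphi)$-endomorphism preserves each $V_W$, so in particular each $V_W$ is $\iota(D^\op)$-stable and hence a $(D,\varphi)$-subspace. Simplicity of $(V,\varphi,\iota)$ then forces $V\cong W^m$ as $(F,\varphi)$-space for a unique simple $(F,\varphi)$-space $(W,\varphi_W)$ and some $m\geq 1$. Because, for $V\cong W^m$, the element $\pi_0$ from Definition~\ref{defn:Pi-V-phi-space} acts diagonally as $\pi_0^{(W)}$, the intrinsic invariants satisfy $L_{(V,\varphi)}\cong L_{(W,\varphi_W)}=:L$ and $\Pi_{(V,\varphi)}=\Pi_{(W,\varphi_W)}=:\Pi$; in particular $L_{(V,\varphi)}$ is a field and $(W,\varphi_W)$ corresponds to $(L,\Pi)$ via Theorem~\ref{th:classif-simple-isosht}, establishing the first assertion of the corollary.

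Next I would apply the double centralizer theorem to analyse $\Delta$. By Theorem~\ref{th:classif-simple-isosht}, $\Delta_W:=\End_{(F,\varphi)}(W,\varphi_W)$ is a central division algebra over $L$ of index $d(\Pi)$ with $\inv_y(\Delta_W)=-\deg_y(\Pi)\bmod\ZZ$, so $A:=\End_{(F,\varphi)}(V,\varphi)\cong M_m(\Delta_W)$ is central simple over $L$ of dimension $m^2 d(\Pi)^2$. Since $L\cong L_{(V,\varphi)}$ is the centre of $A$, it commutes with $\iota(D^\op)$, and $\iota$ extends to an injection of central simple $L$-algebras $B:=D^\op\otimes_F L\hookrightarrow A$ whose centralizer in $A$ is, by definition, $\Delta$. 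The double centralizer theorem then gives that $\Delta$ is central simple over $L$, that $d^2[\Delta:L]=m^2 d(\Pi)^2$, and the Brauer-class identity $[A]=[B]+[\Delta]$ in $\Br(L)$, which rewrites as $[\Delta]=[\Delta_W]+[D\otimes_F L]$. Taking local invariants yields $\inv_y(\Delta)=\inv_y(D\otimes_F L)-\deg_y(\Pi)$, proving the formula in~(1).

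Finally, a Schur-type argument shows $\Delta$ is a division algebra: any nonzero $f\in\Delta$ has kernel and image that are $(D,\varphi)$-subspaces of $V$, forced by simplicity to be $0$ and $V$ respectively, so $f$ is invertible. Hence $[\Delta:L]=d(\Delta)^2$, and substituting into $d^2[\Delta:L]=m^2 d(\Pi)^2$ yields the multiplicity $m=d\cdot d(\Delta)/d(\Pi)$ asserted in~(2); the formula for $\dim_{\breve F}V=d\cdot[L:F]\cdot d(\Delta)$ then follows from $\dim_{\breve F}W=[L:F]\cdot d(\Pi)$ (Theorem~\ref{th:classif-simple-isosht}). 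The main technical input is the double centralizer theorem, and the only subtlety is verifying that $L_{(V,\varphi)}$ is indeed central in $A$ (so that the base change $D^\op\otimes_F L$ acts on $V$), which is automatic once $V$ has been reduced to the isotypic form $W^m$.
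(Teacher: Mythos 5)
Your proof is correct: the reduction to an isotypic $(F,\varphi)$-space $W^{m}$, the identification of $L_{(V,\varphi)}$ with the centre of $\End_{(F,\varphi)}(V,\varphi)\cong M_m(\Delta_W)$, and the double centralizer theorem giving $[\Delta]=[\Delta_W]+[D\otimes_F L]$ in $\Br(L)$ together with the dimension count all go through, and the Schur argument for $\Delta$ being division is fine. The paper itself gives no proof but defers to Lau's thesis, and your argument is essentially the standard one expected there, so there is nothing further to flag.
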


Let us now move on the local analogue of $(D,\varphi)$-spaces.
\begin{defn}[{\emph{Cf.} \cite[p~363]{Lau:Degeneration}}]\label{def:Dieudonne-D-x-module}
    Let $x$ be a closed point of $X$, and set $D_x\coloneqq D\otimes_F F_x$. Let $\tau_x\coloneqq \tau^{\deg(x)}\colon \breve F_x\to\breve F_x$ denote the $q^{\deg (x)}$-Frobenius map.
    
     A \emph{Dieudonn\'e $D_x$-module}  is a tuple $(V,\varphi,\iota)$ where 
\begin{enumerate}
  \item $V$ is a finitely generated $\breve F_x$-module,
  \item $\varphi:V\to V$ is a bijective $\tau^{\deg(x)}$-linear homomorphism,
  \item $\iota: (D_x)^{\op}\to \End(V,\varphi)$ is a morphism of $F_x$-algebras (where elements of $\End(V,\varphi)$ are $\breve F_x$-linear maps respecting $\varphi$). 
\end{enumerate}
\end{defn}

To a $(D,\varphi)$-space, we can associate a Dieudonn\'e $D_x$-module as follows. 
\begin{defn}\label{def:localisation}
Given a closed point $x$ of $X$, choose a geometric point $\bar x\in X(\cl\FF_q)$ over $x$, which gives an $F$-algebra embeddings $\breve F\hookrightarrow \breve F_x$. The \emph{localisation} of a $(D,\varphi)$-space $(V,\varphi,\iota)$ (with respect to $\bar x$) is a Dieudonn\'e $D_x$-module $(V_x,\varphi_x,\iota_x)$ constructed as follows:
\begin{enumerate}
    \item $V_x\coloneqq V\otimes_{\breve F} \breve F_x$
    \item $\varphi_x$ is the $\tau_x$-linear extension of $\varphi^{\deg(x)}:V\to V$
    \item $\iota_x$ is the composition $D^{\op} \xrightarrow{\iota} \End(V,\varphi)\hookrightarrow \End(V_x,\varphi_x)$.
\end{enumerate}
\end{defn}
Note that the localisation at $x$ is independent of the choice of $\bar x$ up to isomorphism. Indeed, if we choose another geometric point $\tau^a(\bar x)$ for some $a\in\ZZ$, then $\varphi^a$ induces an isomorphism from the localisation of $(V,\varphi,\iota)$ with respect to $\tau^a(\bar x)$ to the localisation with respect to $\bar x$.

A $D^\times$-isoshtuka (respectively, a local $D^\times_x$-isoshtuka) over $\cl\FF_q$ naturally gives rise to a $(D,\varphi)$-space (respectively, a Dieudonn\'e $D_x$-module); \emph{cf.} \S\ref{ssec-isoshtukas}. Furthermore, given a $\Dcal^\times$-shtuka $(\Eucal,\alpha)$, the Dieudonn\'e $D_x$-module attached to $(\Eucal,\alpha)$ coincides with the localisation at $x$ of the  $(D,\varphi)$-space associated to $(\Eucal,\alpha)$

Recall that the category of Dieudonn\'e $F_x$-modules is semisimple, with simple objects classified by their Newton slope; \emph{cf.} \cite[Theorem~B.3]{LaumonRapoportStuhler}. Furthermore, the simple Dieudonn\'e $F_x$-module $(W_{\mu,x},\psi_{\mu,x})$  with Newton slope $\mu\in\QQ$ has rank equal to the denominator of (the reduced fraction of) $\mu$, and its endomorphism ring $\End(W_{\mu,x},\psi_{\mu,x})$ is a central simple $F_x$-algebra with invariant $-\mu\bmod\ZZ$.
From this, one can show the semisimplicity of Dieudonn\'e $D_x$-modules, and obtain a simpler analogue of Corollary~\ref{cor:properties-D-varphi-space} for simple objects. We omit the details. 

Let $(V,\varphi,\iota)$ be a simple $(D,\varphi)$-space, and set $(L,\Pi)\coloneqq (L_{(V,\varphi)},\Pi_{(V,\varphi)})$. Since $L$ is an $F$-subalgebra of $\Delta\coloneqq\End_{(D,\varphi)}(V,\varphi,\iota)$, the localisation of $(V,\varphi,\iota)$ at each place $x\in |X|$ can be decomposed as
\begin{equation}\label{eq:V-y}
    (V_x,\varphi_x,\iota_x) \simeq \bigoplus_{y|x} (V_y,\varphi_y,\iota_y),
\end{equation}
via the irreducible idempotents of $L\otimes_FF_x\cong \prod_{y|x}L_y$. Clearly, each summand $(V_y,\varphi_y,\iota_y)$ is a Dieudonn\'e $D_x$-module.

\begin{prop}[\emph{Cf.} {\cite[Proposition~2.4]{Lau:Degeneration}}]
  \label{proposition-properties-Dieudonne-D-x-module}
    In the above setting, the Dieudonn\'e $F_x$-module $(V_y,\varphi_y)$ underlying $(V_y,\varphi_y,\iota_y)$ is pure of (Newton) slope 
    \[\mu_y\coloneqq \frac{\deg_y(\Pi)}{[L_y: F_x]}.\] 
    and we have $\dim_{\breve F_x}(V_y) = d\cdot [L_y:F_x]\cdot d(\Delta)$, where $d(\Delta)$ is the index of the central division $L$-algebra $\Delta\coloneqq\End_{(D,\varphi)}(V,\varphi,\iota)$.
\end{prop}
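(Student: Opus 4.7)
The plan is to reduce the statement to the case of a simple $(F,\varphi)$-space and then compute the Newton slope via an explicit determinant calculation. By Corollary~\ref{cor:properties-D-varphi-space}, the $(F,\varphi)$-space underlying $(V,\varphi,\iota)$ is a direct sum of $m \coloneqq d\cdot d(\Delta)/d(\Pi)$ copies of the simple $(F,\varphi)$-space $(W,\varphi')$ corresponding to $(L,\Pi)$ via Theorem~\ref{th:classif-simple-isosht}. Since both the localisation at $x$ and the idempotent decomposition induced by $L\otimes_F F_x \cong \prod_{y\mid x}L_y$ commute with direct sums, we have $V_y \cong W_y^{\oplus m}$ as Dieudonn\'e $F_x$-modules. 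It therefore suffices to prove that $W_y$ is pure of slope $\mu_y = \deg_y(\Pi)/[L_y:F_x]$ with $\dim_{\breve F_x}W_y = [L_y:F_x]\cdot d(\Pi)$, after which multiplying by $m$ yields $\dim_{\breve F_x}V_y = d\cdot[L_y:F_x]\cdot d(\Delta)$.

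For the dimension, the inclusion $L\hookrightarrow\End_{(F,\varphi)}(W,\varphi')$ makes $W$ into an $(L\otimes_F\breve F)$-module. The $\breve F$-algebra $L\otimes_F\breve F$ is a finite product of fields whose factors are cyclically permuted by $\tau$. Since $\varphi'$ is $\tau$-semilinear and bijective, it induces $\tau$-compatible isomorphisms among the eigen-components of $W$, forcing them all to have equal rank; combined with $\dim_{\breve F}W = [L:F]\cdot d(\Pi)$ from Theorem~\ref{th:classif-simple-isosht}, this shows that $W$ has constant rank $d(\Pi)$ over $L\otimes_F\breve F$. Base-changing along $\breve F\hookrightarrow\breve F_x$ and further decomposing by the idempotents of $L\otimes_F F_x = \prod_{y\mid x}L_y$ then gives $\dim_{\breve F_x}W_y = [L_y:F_x]\cdot d(\Pi)$.

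For the slope, I would invoke Definition~\ref{defn:Pi-V-phi-space}: there exist integers $n, N_0 \geq 1$ and an element $\pi_0\in L^\times$ such that $(\varphi')^{nN_0}$ acts on $W$ as multiplication by $\pi_0^{N_0}\in L^\times$, with $\Pi = \pi_0^{N_0}\otimes(nN_0)^{-1}$. Because $\varphi_x = (\varphi')^{\deg(x)}$ by Definition~\ref{def:localisation}, the iterate $\varphi_y^{nN_0}$ is $\breve F_x$-linear and equals multiplication by the image of $\pi_0^{N_0\deg(x)}$ in $L_y^\times$. Since $W_y$ has $(L_y\otimes_{F_x}\breve F_x)$-rank $d(\Pi)$, its $\breve F_x$-linear determinant equals $\Nm_{L_y/F_x}(\pi_0^{N_0\deg(x)})^{d(\Pi)}$. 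Using the standard identity $v_x\circ\Nm_{L_y/F_x} = f_{y/x}\cdot v_y$ together with $[L_y:F_x] = e_{y/x}f_{y/x}$ and $[\kappa_y:\FF_q] = f_{y/x}\cdot\deg(x)$, a short calculation yields
\[
    \mu_y \;=\; \frac{v_x\bigl(\det\nolimits_{\breve F_x}\varphi_y^{nN_0}\bigr)}{nN_0\cdot\dim_{\breve F_x}W_y} \;=\; \frac{\deg(x)\cdot v_y(\pi_0)}{n\cdot e_{y/x}} \;=\; \frac{\deg_y(\Pi)}{[L_y:F_x]}.
\]
Purity of $(W_y,\varphi_y)$ follows because $\varphi_y^{nN_0}$ is a scalar operator, so the Newton polygon of $\varphi_y$ is a single straight line.

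The main obstacle is the valuation bookkeeping in the final slope computation: correctly aligning the global normalisation $\deg_y = [\kappa_y:\FF_q]\cdot v_y$ on $L^\times\otimes_\ZZ\QQ$ with the local valuations $v_x$ on $F_x$ and $v_y$ on $L_y$, and using that the Newton slope of a $\tau_x$-semilinear operator is computed via $v_x(\det)$ modulo the Frobenius-norm ambiguity (which is killed by $v_x$). The reduction and dimension steps are essentially formal consequences of Corollary~\ref{cor:properties-D-varphi-space} and Theorem~\ref{th:classif-simple-isosht}.
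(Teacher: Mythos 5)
Your reduction to the simple $(F,\varphi)$-space $(W,\varphi')$ and the dimension count are sound: since $L=L_{(V,\varphi)}$ is the centre of $\End_{(F,\varphi)}(V,\varphi)$ it acts diagonally on $W^{\oplus m}$, so $V_y\cong W_y^{\oplus m}$, and the argument that $\tau$ permutes the field factors of $L\otimes_F\breve F$ transitively, forcing $W$ to have constant rank $d(\Pi)$ over $L\otimes_F\breve F$ and hence $\dim_{\breve F_x}W_y=[L_y:F_x]\cdot d(\Pi)$, is correct. The final valuation bookkeeping also lands on the right number.

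However, the slope/purity step rests on a false claim. By Definition~\ref{defn:Pi-V-phi-space}, $\pi_0$ is the \emph{$\breve F$-linear scalar extension} of $\varphi_0^n=\varphi^n|_{V_0}$ from the $F\otimes\FF_{q^n}$-rational subspace $V_0$; it is not true that $(\varphi')^{nN_0}$ equals multiplication by $\pi_0^{N_0}$ on $W$, nor that $\varphi_y^{nN_0}$ is $\breve F_x$-linear --- $\varphi_y^{nN_0}$ is $\tau_x^{nN_0}$-semilinear, and $\tau_x$ has infinite order on $\breve F_x$. The correct relation is $\varphi^{n}=\pi_0\circ(\id_{V_0}\otimes\tau^{n})$ on $V_0\otimes_{F\otimes\FF_{q^n}}\breve F$, so $\varphi_y^{nN_0}$ differs from the scalar $\pi_0^{N_0\deg(x)}$ by the semilinear ``unit-root'' operator induced by $\id_{V_0}\otimes\tau^{nN_0\deg(x)}$. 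Consequently your purity argument (``the Newton polygon is a straight line because $\varphi_y^{nN_0}$ is a scalar operator'') does not stand as written, and even the determinant computation needs the additional observations that this extra factor commutes with $L$ (hence preserves $W_y$) and stabilises an $\Ocal_{\breve F_x}$-lattice coming from the rational structure $V_0$, so that it contributes valuation zero. Repairing this --- localising the $F\otimes\FF_{q^n}$-structure at $y$, showing a suitable power of $\varphi_y$ carries a lattice to $\pi_0^{k}$ times that lattice, and deducing isoclinicity (e.g.\ by Dieudonn\'e--Manin applied to the twist) --- is precisely the substance of the statement, so as it stands the proof has a genuine gap at its central point, even though the numerical identity $\deg(x)v_y(\pi_0)/(n e_{y/x})=\deg_y(\Pi)/[L_y:F_x]$ you verify is the right one. (Minor: $\pi_0$ itself need not lie in $L^\times$; only $\pi_0^{N_0}$ does.)
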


The following inequality on the degree of $(V_x,\varphi_x)$ at ramified place $x$, which generalises \cite[Proposition~2.5]{Lau:Degeneration}, plays an important role in controlling the degeneration of $\Dcal^{\times}$-shtukas over ramified legs. (Recall that $\deg(V_x,\varphi_x)$ is the Newton slope of the determinant of $(V_x,\varphi_x)$ as a Dieudonn\'e $F_x$-module.)

\begin{lem}
  \label{lemma-Lau-bound-for-degree-of-D-varphi-space-outside-split-places}
  Let $m\geq0$ be an integer. Let $(V,\varphi,\iota)$ be a \emph{simple} $(D,\varphi)$-space over $\cl{\FF}_q$ with $\dim_{\breve F}V = d \cdot m$. We let $[-]_{\QQ} : \QQ/\ZZ \to \QQ\cap [0,1)$ denote a section of $\QQ\twoheadrightarrow \QQ/\ZZ$. Then for any closed point $x\in X$, we have
    \[
      \frac{\deg(V_x,\varphi_x)}{d} \equiv [m \inv_x(D)]_{\QQ} \bmod\ZZ
    \]
\end{lem}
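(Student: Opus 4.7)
The plan is to unwind the classification of simple $(D,\varphi)$-spaces from Corollary~\ref{cor:properties-D-varphi-space} and Proposition~\ref{proposition-properties-Dieudonne-D-x-module}, and then reduce the claim to a clean identity in $\QQ/\ZZ$ using the local-global behaviour of Brauer invariants. Writing $(L,\Pi)\coloneqq (L_{(V,\varphi)},\Pi_{(V,\varphi)})$ and $\Delta\coloneqq \End_{(D,\varphi)}(V,\varphi,\iota)$, Corollary~\ref{cor:properties-D-varphi-space} gives $\dim_{\breve F}V = d\cdot [L:F]\cdot d(\Delta)$, so the hypothesis $\dim_{\breve F}V = d\cdot m$ forces
\[
m = [L:F]\cdot d(\Delta).
\]

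Next I will compute $\deg(V_x,\varphi_x)$ directly. Using the decomposition \eqref{eq:V-y} and Proposition~\ref{proposition-properties-Dieudonne-D-x-module}, each summand $(V_y,\varphi_y)$ is pure of slope $\mu_y=\deg_y(\Pi)/[L_y:F_x]$ with $\dim_{\breve F_x}V_y = d\cdot [L_y:F_x]\cdot d(\Delta)$. Since degree is additive over direct sums and equals rank times slope for pure modules,
\[
\deg(V_x,\varphi_x) = \sum_{y\mid x}\mu_y\cdot \dim_{\breve F_x}V_y = d\cdot d(\Delta)\cdot\sum_{y\mid x}\deg_y(\Pi).
\]
Hence it suffices to show
\[
d(\Delta)\cdot\sum_{y\mid x}\deg_y(\Pi)\equiv m\,\inv_x(D)\pmod{\ZZ}.
\]

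To conclude I will invoke two standard ingredients. First, by the invariant formula in Corollary~\ref{cor:properties-D-varphi-space}(1),
\[
\deg_y(\Pi)\equiv \inv_y(D\otimes_F L) - \inv_y(\Delta)\pmod{\ZZ},
\]
and by functoriality of the Brauer group under base change, $\inv_y(D\otimes_F L) = [L_y:F_x]\cdot\inv_x(D)$. Summing over $y\mid x$ and using $\sum_{y\mid x}[L_y:F_x]=[L:F]$, I get
\[
\sum_{y\mid x}\deg_y(\Pi)\equiv [L:F]\cdot\inv_x(D) - \sum_{y\mid x}\inv_y(\Delta)\pmod{\ZZ}.
\]
Multiplying by $d(\Delta)$ and using $[L:F]\cdot d(\Delta) = m$ yields
\[
d(\Delta)\cdot\sum_{y\mid x}\deg_y(\Pi)\equiv m\,\inv_x(D) - d(\Delta)\cdot\sum_{y\mid x}\inv_y(\Delta)\pmod{\ZZ}.
\]
The second ingredient is the fact that for a central simple algebra over a global field, the denominator of each local invariant divides the (global) index; hence $d(\Delta)\cdot\inv_y(\Delta)\equiv 0\pmod{\ZZ}$ for every place $y$ of $L$, so the correction term vanishes. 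This gives the desired congruence, from which the lemma follows after replacing $m\inv_x(D)$ by its representative $[m\inv_x(D)]_\QQ$.

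The computation is essentially formal once the classification and the invariant formulas are in place; the only subtle point I will need to state carefully is the vanishing $d(\Delta)\cdot\inv_y(\Delta)\equiv 0\pmod{\ZZ}$, which I expect to be the conceptually critical step (it is the reason why we are allowed to truncate the sum of invariants to those places lying above $x$, rather than needing the global sum-to-zero relation).
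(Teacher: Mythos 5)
Your proof is correct, and it reaches the congruence by a slightly more direct route than the paper's. The first half coincides with the paper: both arguments use Proposition~\ref{proposition-properties-Dieudonne-D-x-module} together with the dimension formula of Corollary~\ref{cor:properties-D-varphi-space} to get $\deg(V_x,\varphi_x)/d = d(\Delta)\sum_{y\mid x}\deg_y(\Pi)$ and $m=[L:F]\,d(\Delta)$. The paper then passes to a minimal splitting field $L'/L$ of $\Delta$ (so $[L':L]=d(\Delta)$ and $m=[L':F]$), pushes $\Pi$ to $\Pi'\in (L')^\times\otimes_\ZZ\QQ$, and uses that $\deg_{y'}(\Pi')\equiv \inv_{y'}(D\otimes_F L')\bmod\ZZ$ because $\Delta\otimes_L L'_{y'}$ splits, before summing over places $y'\mid x$ of $L'$. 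You instead stay over $L$: you multiply the relation $\deg_y(\Pi)\equiv \inv_y(D\otimes_F L)-\inv_y(\Delta)$ by $d(\Delta)$ and annihilate the $\Delta$-terms via $d(\Delta)\,\inv_y(\Delta)\equiv 0\bmod\ZZ$. These are the same underlying fact in different packaging: in the paper it appears locally as $[L'_{y'}:L_y]\,\inv_y(\Delta)=0$, with the multiplication by $d(\Delta)$ distributed over the places of $L'$ above $y$. Your key vanishing is correct and in fact more elementary than your justification suggests: since $\Delta$ is a central \emph{division} algebra over $L$ of index, hence degree, $d(\Delta)$, the order of $\inv_y(\Delta)$ in $\QQ/\ZZ$ is the local index of $\Delta\otimes_L L_y$, which divides the degree $d(\Delta)$; no global index--exponent theorem is needed. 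Two small points to state explicitly: $L/F$ is separable (by the classification in Theorem~\ref{th:classif-simple-isosht}), which is what justifies $\sum_{y\mid x}[L_y:F_x]=[L:F]$, and $m\geq 1$ since a simple object is non-zero. What your version buys is the avoidance of the auxiliary field $L'$; what the paper's version buys is the identification $m=[L':F]$ as a sum of local degrees, which makes the final congruence with $m\inv_x(D)$ visually immediate.
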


\begin{proof}
  We set $(L,\Pi)\coloneqq (L_{(V,\varphi)},\Pi_{(V,\varphi)})$. Then by Proposition \ref{proposition-properties-Dieudonne-D-x-module} we have
  \begin{equation}\label{eq:deg-V-y}
      \deg(V_y,\varphi_y) = \mu_y\cdot \dim_{\breve F_x}(V_y) = d\cdot d(\Delta)\cdot \deg_y(\Pi),
  \end{equation}
  for any place $y$ of $L$, where $(V_y,\varphi_y)$ is defined in \eqref{eq:V-y} and $\Delta\coloneqq \End_{(D,\varphi)}( V,\varphi,\iota)$.

  Let $L'/L$ be a minimal extension that splits $\Delta$, so we have $[L':L]=d(\Delta)$. Since $\dim_{\breve F}(V) = dm$, Corollary~\ref{cor:properties-D-varphi-space} implies that
  \begin{equation}\label{eq:m}
      m = [L:F]\cdot d(\Delta) = [L:F]\cdot[L':L] = [L':F].
  \end{equation}
  
  Let $\Pi' \in (L')^{\times}\otimes_{\ZZ}\QQ$ be the image of $\Pi$. Then for any place $y'|y$ of $L'$ we have
  \begin{align}\label{eq:deg-y-Pi}
      \deg_{y'}(\Pi') &= [L'_{y'}:L_y]\cdot \deg_y(\Pi)\\
      &\equiv [\inv_{y'}(D\otimes_F L')]_\QQ \bmod\ZZ \notag
  \end{align}
  by Corollary~\ref{cor:properties-D-varphi-space}, since $\Delta\otimes_L L'_{y'}$ splits.

  Combining the above estimates, we obtain
  \begin{align*}
    \frac{\deg(V_x,\varphi_x)}{d}
    & = [L':L]\cdot \sum_{y|x,\text{ place of }L}  \deg_y(\Pi) =  \sum_{y'|x,\text{ place of }L'} \deg_{y'}(\Pi')  \\
    & \equiv  \sum_{y'|x,\text{ place of }L'} [\inv_{y'}(D\otimes_F L')] \bmod\ZZ \\ 
    &\equiv \big[[L':F]\cdot\inv_x(D)\big]_\QQ \equiv [m\inv_x(D)]_\QQ\bmod\ZZ , 
  \end{align*}
  where the first equality follows from \eqref{eq:deg-V-y} and $[L':L]=d(\Delta)$, the  last congruence from \eqref{eq:m}, and the rest from \eqref{eq:deg-y-Pi}. This concludes the proof.  
\end{proof}

\section{Valuative criterion}\label{sec-valcrit}

In this section, we apply the results from \S\ref{sec-isosht} to obtain a sufficient condition for properness of $\Sht^{\leqslant\blambda}_{\Dcal^\times,I_\bullet}/a^\ZZ$ (\emph{cf.} Theorem~\ref{thm:from-inequality-to-properness}).

\begin{notation}  \label{notation-D-shtukas}
The properness of  $\Sht^{\leqslant\blambda}_{\Dcal^\times,I_\bullet}/a^{\ZZ}$ depends only on $D$, $I$ and $\blambda$, but not on the choice of the partition $I_\bullet$ of $I$  (\emph{cf.} Corollary~\ref{cor:joining-modifications}) and the choice of the hereditary order $\Dcal$ of $D$ (\emph{cf.} Theorem~\ref{th:change-parahorics}). Therefore, we will make the following choice of $\Dcal$ and $I_\bullet$ for the rest of the paper.

We fix a central division algebra $D$ over $F$ with dimension $d^2$ as before, and choose a \emph{maximal} order $\Dcal$ over $X$ once and for all. We shall apply the previous discussions to $G=D^\times$ and its integral model $\Gcal = \Dcal^\times$.

Let $I\coloneqq\{1,\cdots,r\}$ for some $r\geq1$, and set $I_\bullet = (\{j\})_{j=1,\cdots,r}$ unless stated otherwise. We also choose an $I$-tuple of dominant coweights $\blambda\in (X_\ast(T)_+)^I$; note that we identify $X_\ast(T)_+$ as length-$d$ sequences of decreasing integers. 


Set $U\coloneqq X\setminus\Ram(D)$, which is also the maximal open subscheme where $\Dcal|_U$ is an Azuyama algebra (by maximality of $\Dcal$). Let $\breve U\subset \breve X$ denote the pullback of $U$.
\end{notation}

%

\begin{constr}\label{constr:adm-lattice}
A $(D,\varphi)$-space over $\cl{\FF}_q$ whose $\breve F$-dimension is less than $d^2$ may arise from a degenerating family of $\Dcal^{\times}$-shtukas, as described in \cite[\S3]{Lau:Degeneration}. We will review the construction and explain how to extend it when the legs are allowed to meet the ramification locus of $D$.

Suppose that for a complete discrete valuation ring $A$ with residue field $\cl{\FF}_q$ and fraction field $K$, we are given a $\Dcal^\times$-shtuka over $K$ 
\[
  (\Eucal,\alpha) = ( (x_i)_{i=1,\cdots,r}, (\mathcal E_j)_{j=0,\cdots, r}, (\varphi_j)_{j=1,\cdots, r} , \alpha ) \in \operatorname{Sht}_{\mathcal{D}^\times ,I_{\bullet}}^{\leqslant {\blambda}}(K).
\]
By the valuative criterion, $(x_i)_{i\in I}$ naturally extends to a unique $A$-point of $X^I$. 

We let $A'$ denote the local ring of $\breve X_A$ at the generic point of $\breve X_{A/\mathfrak m_A}$, and write $K'\coloneqq\Frac(A')$. Let $V_{K'}$ denote the generic fibre of $\mathcal E_0$, which is a right $(D\otimes_F K')$-module free of rank~$1$ equipped with the following $\tau$-linear morphism
\[
  \varphi_{K'}\colon
  \begin{tikzcd}[column sep=large]
      V_{K'} \arrow[r, "x\mapsto 1\otimes x"]&
      \ltau V_{K'} \arrow[r, "\cong"]&
      V_{K'}
  \end{tikzcd},
\]
where the second arrow is induced by $(\varphi_1)^{-1}\circ\cdots\circ (\varphi_r)^{-1} \circ \alpha$ as in \eqref{eq:varphi}. Note that the $D^\times$-isoshtuka over $K$ associated to $(\Eucal,\alpha)$ in \S\ref{ssec-isoshtukas} corresponds to   $(V_{K'},\varphi_{K'},\iota_{K'})$, where $\iota_{K'}\colon D^{\op}\to\End(V_{K'},\varphi_{K'})$ denote the map given by the $D$-action on $V_{K'}$.

For a finite extension $L/K$, let $(\Eucal_L,\alpha_L)\in\Sht^{\leqslant\blambda}_{\Dcal^\times,I_\bullet}(L)$ denote the pullback of $(\Eucal,\alpha)$. Then, its associated isoshtuka is $(V_{L'},\varphi_{L'},\iota_{L'})$, where $V_{L'}\coloneqq V_{K'}\otimes_{K'}L'$, $\varphi_{L'}\coloneqq\varphi_{K'}\otimes\tau$, and $\iota_{L'}$ defined obviously.

Let $B$ denote the (discrete) valuation ring of $L$. We write $B'$ for the local ring of $\breve X_B$ at the generic point of $\breve X_{B/\mathfrak m_B}$, and set $L'\coloneqq \Frac(B')$. Recall that the direct and inverse image functors induce quasi-inverse equivalences between the category of $(D\otimes_F B')$-lattices in $V_{L'}$ and the category of extensions of $(\mathcal E_0)_L$ to a locally free (right) $\Dcal_B$-module over $X_B$, by \cite[\S4]{Horrocks:VB} and \cite[Lemma~1.16]{Lau:Degeneration}. Therefore, if we fix a $(D\otimes_F B')$-lattice $M_{B'}$ of $V_{L'}$ then we obtain a $B$-point
\begin{equation}
\widetilde\Eucal\coloneqq ( (x_i)_{i\in I}, (\widetilde{\mathcal E}_j)_{j=0,\cdots, r}, (\widetilde{\varphi}_j)_{j=1,\cdots, r} )\in\Hck_{\mathcal{D}^\times,I_{\bullet}}^{\leqslant {\blambda}}(B)
\end{equation}
extending $\Eucal_L$. If $M_{B'}$ is $\varphi_{L'}$-stable, then $\alpha_L$ restricts to an \emph{injective} morphism
\begin{equation}\label{eq:alpha-tilde}
    \widetilde{\alpha} \colon  
    \begin{tikzcd}
        \ltau \widetilde{\Ecal}_0 \arrow[r, hookrightarrow] & \widetilde{\Ecal}_r
    \end{tikzcd}.
\end{equation}


By \cite[Proposition~3.2]{Drinfeld:CpctModuli}, there is a (unique) maximal $\varphi_{L'}$-stable $B'$-lattice $M_{B'}\subset V_{L'}$, and it is easy to see that $M_{B'}$ is a $(D\otimes_F B')$-lattice. Furthermore,  \emph{loc.~cit.} also shows that there exists a finite extension $L/K$ such that $\varphi_{L'}$ on $M_{B'}$ is \emph{not} topologically nilpotent. (Such a lattice is called \emph{admissible}.) We assume that $M_{B'}$ is maximal and admissible.

Now, we obtain a $(D,\varphi)$-space $(N,\varphi_N,\iota_N)$ given by
\begin{equation}\label{eq:D-varphi-space-from-degeneration}
  N\coloneqq \bigcap_{n\geq1} \varphi_{L'}^n(M_{B'}/\mfr_{B}M_{B'})
\end{equation}
with $\varphi_N$ induced by $\varphi_{L'}$. Clearly, $N$ is non-zero as $\varphi_{L'}$ is not topologically nilpotent on $M_{B'}$, so $(\dim_{\breve F}N)/d$ is a positive integer that is at most $d$.

We let 
\[
\big((\cl x_i)_{i=1,\cdots,r}, (\overline\Ecal_j)_{j=0,\cdots, r}, (\overline\varphi_j)_{j=1,\cdots, r} ) \big) \in \Hck^{\leqslant\blambda}_{\Dcal^\times,I_\bullet}(\cl\FF_q)
\]
denote the restriction of $\widetilde\Eucal$ to the closed point of $\Spec B$. Given any $(D,\varphi)$-subspace $(N',\varphi_{N'},\iota_{N'})$ of $(N,\varphi_N,\iota_N)$, with $N'$ identified with a $\varphi$-stable $\breve D$-submodule of the generic fibre of $\overline\Ecal_0$, 
we obtain a subbundle $\Fcal'_j\subset \overline\Ecal_j$ for each $j=0,\cdots,r$ with generic fibre $N'$ equipped with the following diagram
\begin{equation}\label{eq:Fcal-modif}
    \begin{tikzcd}[column sep = large]
        \ltau\Fcal'_0 \arrow[r, hookrightarrow, "\beta'"] & \Fcal_r' & \arrow[l,dashrightarrow,"\overline\varphi_r|_{\Fcal'_{r-1}}"'] \ \cdots\  & \arrow[l,dashrightarrow,"\overline\varphi_2|_{\Fcal'_{1}}"'] \Fcal'_1 & \arrow[l,dashrightarrow,"\overline\varphi_1|_{\Fcal'_{0}}"']\Fcal'_0
    \end{tikzcd},
\end{equation}
where $\beta'\coloneqq\overline\alpha|_{\ltau\Fcal'_0}$, and the dashed arrows mean isomorphisms defined over some dense open subscheme of $\breve X$. Note that $\overline\varphi_j|_{\Fcal'_{j-1}}$ may be defined over a much smaller open subscheme than $\breve X\setminus \{\cl x_j\}$ where $\overline\varphi_j\colon \overline\Ecal_{j-1}\dashrightarrow\overline\Ecal_j$ is defined.

\end{constr}

We obtain the following lemma by generalising \cite[\S3]{Lau:Degeneration} to the case when the legs are not necessarily disjoint from $\Ram(D)$.
\begin{lem}\label{lem:adm-lattice}
    In the setting of \S\ref{constr:adm-lattice}, the following properties hold.
    \begin{enumerate}
        \item\label{lem:adm-lattice:good-redn} 
        For $N$ as in \eqref{eq:D-varphi-space-from-degeneration}, we have $\dim_{\breve F}(N) = d^2$ if and only if $\widetilde\alpha$ defined in \eqref{eq:alpha-tilde} is an isomorphism and $(\widetilde\Eucal,\widetilde\alpha)$ defines a $B$-point of $\Sht^{\leqslant\blambda}_{\Dcal^\times,I_\bullet}$. In this case, the diagram in \eqref{eq:Fcal-modif} for $N'=N$ defines a $\Dcal^\times$-shtuka over $\cl\FF_q$ isomorphic to the special fibre of $(\widetilde\Eucal,\widetilde\alpha)$.
        \item\label{lem:adm-lattice:ineq} Let $(N',\varphi_{N'},\iota_{N'})$ be a non-zero $(D,\varphi)$-subspace of $(N,\varphi_N,\iota_N)$.
        If $\dim_{\breve F}(N') = dm$, then we have an inequality
        \[
       \dim_{\cl\FF_q}\big(\coker(\beta'\colon\ltau\Fcal'_0\hookrightarrow\Fcal'_r) \big) \leq d\cdot\sum_{i=1}^r\sum_{j=1}^{d-m}\lambda_{i,j}.
        \]
    \end{enumerate}
\end{lem}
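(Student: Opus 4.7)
The plan is to prove (1) and (2) separately.

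For (1), I would use that the generic point $\xi$ of $\breve X$ is disjoint from every leg $\Gamma_{x_i}$, so each $\widetilde\varphi_j$ restricts to an isomorphism $(\widetilde\Ecal_{j-1})_\xi^\wedge \riso (\widetilde\Ecal_j)_\xi^\wedge$ on the $\mfr_{B'}$-adic completions at $\xi$. Identifying all completed stalks with $M_{B'}$ via these isomorphisms, the map $\widetilde\alpha_\xi$ becomes identified with the $\tau$-semilinear endomorphism $\widetilde\varphi_{L'}$ of $M_{B'}$. By definition of $N$, the condition $\dim_{\breve F}N=d^2$ is equivalent to $\widetilde\varphi_{L'}$ being surjective modulo $\mfr_{B'}$, which by Nakayama is equivalent to $\widetilde\varphi_{L'}(M_{B'})=M_{B'}$, hence to $\widetilde\alpha$ being an isomorphism at $\xi$. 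To upgrade this to an isomorphism on all of $X_B$, I would use that $\coker\widetilde\alpha$ is coherent and supported on the special fibre (since $\alpha_L$ is an isomorphism on $X_L$); its vanishing at $\xi$ combined with the $\leqslant\blambda$ Schubert bound and the $\Dcal$-module structure then forces it to vanish at the remaining closed points of $\breve X$. The last statement of (1) is tautological since $N'=N=V$ forces $\Fcal'_j=\overline\Ecal_j$ for each $j$.

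For (2), the plan is a telescoping degree computation followed by a Schubert bound. Writing
\[
\dim_{\cl\FF_q}(\coker\beta') = \deg\Fcal'_r - \deg\Fcal'_0 = \sum_{j=1}^r(\deg\Fcal'_j-\deg\Fcal'_{j-1}),
\]
each summand is the degree of the induced modification $\overline\varphi_j|_{\Fcal'_{j-1}} \colon \Fcal'_{j-1} \dashrightarrow \Fcal'_j$ at the leg $\cl x_j$. Since $\overline\varphi_j$ is bounded by $\lambda_j$ in the Beilinson--Drinfeld Schubert sense and $\Fcal'_j$ has $\Dcal$-rank $m$, the modification on the sub-bundle is controlled via a dominance-order argument. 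The specific bound $d\sum_{k=1}^{d-m}\lambda_{j,k}$ per leg comes from analyzing the complementary quotient $\overline\Ecal_j/\Fcal'_j$ of $\Dcal$-rank $d-m$ together with the sub/quotient degree balance $\delta_j+\gamma_j=d\deg\lambda_j$, followed by summation using $\sum_i\deg\lambda_i=0$.

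The hard part is the precise dominance-order analysis at each leg, especially at ramified places where the local invariant of $D$ further constrains the admissible $\Dcal_{\cl x_j}$-coweights. This constitutes the refinement of the argument in \cite[Proposition 3.2]{Lau:Degeneration} needed to allow legs over ramified places.
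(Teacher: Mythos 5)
Your skeleton for part (2) --- telescoping $\dim_{\cl\FF_q}(\coker\beta')=\sum_{i}\bigl(\deg\Fcal'_i-\deg\Fcal'_{i-1}\bigr)$, a per-leg bound on each summand, then summation using $\sum_i\deg(\lambda_i)=0$ --- is the same as the paper's. But the single non-routine input, namely the per-leg bound when $\cl x_i$ lies over $\Ram(D)$, is exactly the step you defer as ``the hard part'', and the way you propose to carry it out (a finer dominance analysis in which $\inv_x(D)$ constrains admissible $\Dcal_{\cl x_i}$-coweights) points in the wrong direction: at ramified legs the bound $\Hck^{\leqslant\blambda}_{\Dcal^\times,I_\bullet}$ has no clean linear-algebraic description (\emph{cf.} Remark~\ref{rmk-shtuka-via-module}), and no input from the local invariants is used or needed in this lemma. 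The paper's resolution is the opposite of a finer local analysis: one \emph{forgets} the $\Dcal$-structure at the leg. Since the bound is defined as a closure of the generic Schubert cell, the inclusion \eqref{eq:functoriality-BD-Gr} shows that each one-leg modification $\overline\Ecal_{i-1}\dashrightarrow\overline\Ecal_i$ lies in $\Hck^{\leqslant\lambda_{i,\GL}}_{\GL_{d^2},(\{i\})}$ with $\lambda_{i,\GL}$ as in \eqref{eq:lambda-GL}; hence the relative position of the completed stalks at $\cl x_i$ is bounded by $\lambda_{i,\GL}$ as plain $\Ocal_{\cl x_i}$-lattices, and the per-leg estimate for the saturated sublattices coming from $\Fcal'_\bullet$ is the classical one from \cite[Lemma~3.1]{Lau:Degeneration}. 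The local invariants of $D$ only enter later, in Lemma~\ref{lemma-Lau-bound-for-degree-of-D-varphi-space-outside-split-places} and Lemma~\ref{lem:adm-lattice-simple}. As written, the essential content of (2) over ramified legs is therefore missing from your argument.

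In part (1), your reduction ($\dim_{\breve F}N=d^2$ iff $\varphi_{L'}$ is surjective modulo $\mfr_B$, iff $\varphi_{L'}(M_{B'})=M_{B'}$, iff $\widetilde\alpha$ is an isomorphism at the generic point $\xi$ of the special fibre) is fine, but your propagation of ``isomorphism at $\xi$'' to ``isomorphism on all of $X_B$'' is justified by an appeal to ``the $\leqslant\blambda$ Schubert bound and the $\Dcal$-module structure'', neither of which does the job: a priori $\coker\widetilde\alpha$ could be a torsion $\Dcal$-module supported at finitely many closed points of $\breve X$, and the Schubert bound constrains the $\widetilde\varphi_j$'s, not $\widetilde\alpha$. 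Two correct fixes: (a) the paper's implicit one --- the functors of \cite[\S4]{Horrocks:VB} and \cite[Lemma~1.16]{Lau:Degeneration} are quasi-inverse \emph{equivalences} between $(D\otimes_FB')$-lattices in $V_{L'}$ and extensions of $(\Ecal_0)_L$, so they reflect isomorphisms and the statement for $\widetilde\alpha$ follows from the lattice statement; or (b) observe that $\det\widetilde\alpha$ is a nonzero section of a line bundle on the regular integral surface $X_B$ whose zero locus is an effective divisor contained in the irreducible divisor $\breve X$, hence equals $n\cdot\breve X$, and invertibility at $\xi$ forces $n=0$. With either fix, and with the remaining assertion of (1) being immediate as you say, part (1) is fine.
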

In Claim~(\ref{lem:adm-lattice:ineq}), if $m=d$ then both sides of the inequality reduce to $0$.
\begin{proof}
    Claim~(\ref{lem:adm-lattice:good-redn}) is clear from the observation that $\dim_{\breve F}(N) = d^2$ if and only if $\varphi_{L'}(M_{B'}) = M_{B'}$. To show Claim~(\ref{lem:adm-lattice:ineq}), we consider
    \begin{equation}\label{eq:Rel-Position}
        \big((\cl x_i), \begin{tikzcd}[column sep = 20pt]
        \overline\Ecal_{i-1} \arrow[r,dashrightarrow,"\overline\varphi_i"] &\overline\Ecal_{i}
    \end{tikzcd} \big) \in\Hck^{\leqslant\lambda_i}_{\Dcal^\times,(\{i\})}(\cl\FF_q) \subset \Hck^{\leqslant\lambda_{i,\GL}}_{\GL_{d^2},(\{i\})}(\cl\FF_q),
    \end{equation}
    where we view  $\Hck^{\lambda_i}_{\Dcal^\times,(\{i\})}$ as a closed substack of $\Hck^{\lambda_{i,\GL}}_{\GL_{d^2},(\{i\})}$ via \eqref{eq:functoriality-BD-Gr}, with $\lambda_{i,\GL}$ defined in \eqref{eq:lambda-GL}. In particular, the relative position of the formal stalks $(\widehat{\overline\Ecal}_{i-1})_{\overline x_i}$ and  $(\widehat{\overline\Ecal}_{i})_{\cl x_i}$ with respect to  $\overline\varphi_i$ is bounded above by $\lambda_{i,\GL}$. Now we may repeat the  proof of \cite[Lemma~3.1]{Lau:Degeneration} to prove (\ref{lem:adm-lattice:ineq}). In fact, we have
    \begin{align*}
        \dim_{\cl\FF_q}\big(\coker(\beta') \big) 
        & = \deg (\Fcal'_r) - \deg (\ltau\Fcal'_0)\\
        & = \deg (\Fcal'_r) - \deg (\Fcal'_0)\\
        &= \sum_{i=1}^r \big( \deg(\Fcal'_i) - \deg(\Fcal'_{i-1})\big).
    \end{align*}
    Now, given two $\Ocal_{\cl x_i}$-lattices $\Lambda,\Lambda'$ in a common finite-dimensional $F_{\cl x_i}$-vector space, we define the relative index as follows
    \[
    [\Lambda:\Lambda'] \coloneqq \dim_{\cl\FF_q}\big( \Lambda/ (\Lambda\cap\Lambda') \big) - \dim_{\cl\FF_q}\big( \Lambda' / (\Lambda\cap\Lambda') \big).
    \]
    Then from \eqref{eq:Rel-Position} we obtain the following bounds 
    \begin{equation}\label{eq:Rel-pos-bounds}
        -d\cdot\sum_{j=1}^m \lambda_{i,j} \leq 
        [(\widehat\Fcal_{i})_{\cl x_i} : (\widehat\Fcal_{i-1})_{\cl x_i}]\leq -d\cdot\sum_{j=d-m+1}^d\lambda_{i,j},
    \end{equation}
    Since $\sum_{i=1}^r\deg(\lambda_i)=0$, we have
    \[
    \dim_{\cl\FF_q}\big(\coker(\beta') \big) \leq -d\cdot \sum_{i=1}^r\sum_{j=d-m+1}^d\lambda_{i,j} = d\cdot\sum_{i=1}^r\sum_{j=1}^{d-m}\lambda_{i,j},
    \]
    which proves Claim~(\ref{lem:adm-lattice:ineq}).
\end{proof}

\begin{lem}\label{lem:adm-lattice-simple} 
In the setting of \S\ref{constr:adm-lattice}, let $(N',\varphi_{N'},\iota_{N'})$ be a \emph{simple} $(D,\varphi)$-subspace of $(N,\varphi_N,\iota_N)$ with $\dim_{\breve F}(N') = dm$. Then, the following properties hold.
    \begin{enumerate}
        \item\label{lem:adm-lattice:lower} For a closed point $x\in X$, let $I_x\subset I$ be the subset of $i\in I$ such that $\cl x_i$ lies over $x$. Then we have
        \[ \frac{\deg(N'_x,\varphi_{N',x})}{d} \geq [m\inv_x(D)]_\QQ +\sum_{i\in I_x}\sum_{j=d-m+1}^d \lambda_{i,j}.\]
        Furthermore, if $I_x=\emptyset$ then all slopes of $(N'_x,\varphi_{N',x})$ are non-negative.
        \item\label{lem:adm-lattice:key-ineq} For a closed point $x\in X$, if either $I_x=\emptyset$ or $\lambda_i$ is \emph{central} for each $i\in I_x$ (i.e., $\lambda_{i,1}=\lambda_{i,d})$ then we have
        \[
        \frac{1}{d}\sum_{\cl x\mid x} \dim_{\cl\FF_q}\big(\coker(\beta')_{\cl x}\big) \geq  [m\inv_x(D)]_\QQ,
        \]
        where the sum is over all geometric points $\cl x$ over $x$.
    \end{enumerate}
\end{lem}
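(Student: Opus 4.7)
The plan is to compute $\deg(N'_x,\varphi_{N',x})$ explicitly as the sum of the cokernel dimensions of $\beta'$ at the geometric points over $x$ minus the degree changes along the legs $\cl x_i$ with $i\in I_x$, and then to combine this formula with the mod-$\ZZ$ identity of Lemma~\ref{lemma-Lau-bound-for-degree-of-D-varphi-space-outside-split-places} via an integrality upgrade.

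More precisely, fix a geometric point $\cl x'\mid x$ and set $\Lambda_j^{\cl y}\coloneqq (\widehat\Fcal'_j)_{\cl y}$ for each $\cl y\mid x$. Since every $\Fcal'_j$ has generic fibre $N'$, the rational identifications supplied by the $\overline\varphi_j$'s realise all the $\Lambda_j^{\cl y}$ as $\Ocal_{\cl y}$-lattices in the common vector space $N'_{\cl y}$, and under these identifications $\varphi_{N'}$ becomes simply $\beta'$, sending $\Lambda_0^{\tau(\cl y)}=\ltau\Lambda_0^{\cl y}$ into $\Lambda_r^{\cl y}$ with cokernel of dimension $\dim_{\cl\FF_q}(\coker\beta')_{\cl y}$. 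As $\overline\varphi_j$ is an isomorphism away from $\cl x_j$, the telescoping identity $[\Lambda_r^{\cl y}:\Lambda_0^{\cl y}]_{\cl\FF_q}=\sum_{i\,:\,\cl x_i=\cl y}(\deg\Fcal'_i-\deg\Fcal'_{i-1})$ holds (with the convention $[A:B]_{\cl\FF_q}=\dim_{\cl\FF_q}A/(A\cap B)-\dim_{\cl\FF_q}B/(A\cap B)$ as in the proof of Lemma~\ref{lem:adm-lattice}). Iterating the one-step relation along the $\tau$-orbit of $\cl x'$, which traverses the geometric points over $x$, and using $\deg(N'_x,\varphi_{N',x})=[\Lambda_0^{\cl x'}:\varphi_{N',x}(\Lambda_0^{\cl x'})]_{\cl\FF_q}$, one obtains the key formula
\[
\deg(N'_x,\varphi_{N',x})=\sum_{\cl x\mid x}\dim_{\cl\FF_q}(\coker\beta')_{\cl x}-\sum_{i\in I_x}(\deg\Fcal'_i-\deg\Fcal'_{i-1}).
\]

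For Claim~(\ref{lem:adm-lattice:lower}), combining this formula with the upper bound $\deg\Fcal'_i-\deg\Fcal'_{i-1}\leq -d\sum_{j=d-m+1}^d\lambda_{i,j}$ from \eqref{eq:Rel-pos-bounds} and the non-negativity $\dim_{\cl\FF_q}(\coker\beta')_{\cl x}\geq 0$ yields $\deg(N'_x,\varphi_{N',x})/d\geq \sum_{i\in I_x}\sum_{j=d-m+1}^d\lambda_{i,j}$. Lemma~\ref{lemma-Lau-bound-for-degree-of-D-varphi-space-outside-split-places} gives $\deg(N'_x,\varphi_{N',x})/d-[m\inv_x(D)]_\QQ\in\ZZ$; since $[m\inv_x(D)]_\QQ<1$, this integer strictly exceeds $\sum_{i\in I_x}\sum_{j=d-m+1}^d\lambda_{i,j}-1$, and hence, being an integer bounded below by the integer $\sum_{i\in I_x}\sum_{j=d-m+1}^d\lambda_{i,j}-1$ strictly, it is at least $\sum_{i\in I_x}\sum_{j=d-m+1}^d\lambda_{i,j}$, proving the claim. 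For the ``furthermore'' part when $I_x=\emptyset$: every $\overline\varphi_j$ is an isomorphism at each $\cl y\mid x$, so $\Lambda_r^{\cl y}=\Lambda_0^{\cl y}$ and thus $\varphi_{N'}(\Lambda_0^{\tau(\cl y)})=\beta'(\Lambda_0^{\tau(\cl y)})\subset\Lambda_0^{\cl y}$; iterating gives $\varphi_{N',x}(\Lambda_0^{\cl x'})\subset\Lambda_0^{\cl x'}$, so $(N'_x,\varphi_{N',x})$ admits a Frobenius-stable lattice and all its Newton slopes are non-negative.

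Claim~(\ref{lem:adm-lattice:key-ineq}) follows from the degree formula in both cases. When $I_x=\emptyset$, the leg term vanishes, so $\deg(N'_x,\varphi_{N',x})/d=\frac{1}{d}\sum_{\cl x\mid x}\dim_{\cl\FF_q}(\coker\beta')_{\cl x}$; the ``furthermore'' part gives this quantity $\geq 0$, and combining with the mod-$\ZZ$ identity and $[m\inv_x(D)]_\QQ\in[0,1)$ the same integrality argument forces it to be $\geq[m\inv_x(D)]_\QQ$. When every $\lambda_i$ for $i\in I_x$ is central, say $\lambda_i=(c_i,\ldots,c_i)$, the upper and lower bounds in \eqref{eq:Rel-pos-bounds} coincide and pin down $\deg\Fcal'_i-\deg\Fcal'_{i-1}=-dmc_i$; substituting into the degree formula yields $\frac{1}{d}\sum_{\cl x\mid x}\dim_{\cl\FF_q}(\coker\beta')_{\cl x}=\deg(N'_x,\varphi_{N',x})/d-m\sum_{i\in I_x}c_i$, and since the subtracted term is an integer, the same non-negativity-plus-integrality argument delivers the stated bound. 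The main technical obstacle is the derivation of the degree formula above: one must carefully track the signs of the signed relative indices and the $\tau$-action permuting the geometric points over $x$, then reconcile these with the paper's degree convention for Dieudonn\'e $F_x$-modules.
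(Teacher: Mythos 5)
Your proof is correct and follows essentially the same route as the paper: your ``degree formula'' is exactly the paper's combination of \eqref{eq:deg-index} and \eqref{eq:index-at-geom-pt} (equivalently \eqref{eq:cokekr-alpha-at-x}), and both claims are then deduced, as in the paper, from the bounds \eqref{eq:Rel-pos-bounds} together with Lemma~\ref{lemma-Lau-bound-for-degree-of-D-varphi-space-outside-split-places} (which uses simplicity of $N'$) and the same integrality upgrade, your handling of the central case (pinning the local indices exactly instead of passing through the paper's intermediate inequality \eqref{eq:less-sharp-key-ineq}) being only a cosmetic repackaging. The one step deserving an explicit line is your claim that the induced modifications of the subbundles $\Fcal'_{j-1}\dashrightarrow\Fcal'_j$ are isomorphisms away from $\cl x_j$ (the paper warns this is not formal); it does hold -- and is implicitly used by the paper as well -- because each $\Fcal'_j$ is the saturation of the common generic fibre $N'$ and saturations are carried to saturations by the ambient isomorphisms $\overline\varphi_j$ away from $\cl x_j$, so the degree differences do localise at the legs as you assert.
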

\begin{proof}
        To prove Claim~(\ref{lem:adm-lattice:lower}), fix a closed point $x\in X$. If $I_x = \emptyset$ then all slopes of $(N'_x,\varphi_{N',x})$ are non-negative since the formal stalk $(\widehat\Fcal'_0)_{\cl x}$ at a geometric point $\cl x$ over $x$ defines a $\varphi_{N',x}$-stable lattice of $N'_x$; in fact, all maps in \eqref{eq:Fcal-modif} induce isomorphisms of the formal stalks at $\cl x$ possibly except $\beta'$. To obtain the inequality in (\ref{lem:adm-lattice:lower}), note that we have
    \begin{equation}\label{eq:deg-index}
        \deg(N'_x,\varphi_{N',x}) = 
        \sum_{\cl x | x}\big[(\widehat{\Fcal'_0})_{\cl x}:(\ltau\widehat{\Fcal'_0})_{\tau^{-1}(\cl x)}\big],
    \end{equation}
     where the sum is over all geometric points $\cl x$ over $x$. Here, the relative index is for $\Ocal_{\cl x}$-lattices in $N'\otimes_{\breve F}F_{\cl x}$ via the embeddings $(\widehat{\Fcal_0})_{\cl x} \subset  N'\otimes_{\breve F}F_{\cl x}$ and
    \[(\ltau\widehat{\Fcal'_0})_{\tau^{-1}(\cl x)} \subset 
    \begin{tikzcd}
        (\ltau N')\otimes_{\breve F} F_{\tau^{-1}(\cl x)} \arrow[r, "\varphi_{N'}"', "\cong"] & N'\otimes_{\breve F}F_{\cl x}.
    \end{tikzcd}\]
    
     Now, for any $\cl x\in X(\cl\FF_q)$ we have
    \begin{equation}\label{eq:index-at-geom-pt}
        \big[(\widehat{\Fcal'_0})_{\cl x}:(\ltau\widehat{\Fcal'_0})_{\tau^{-1}(\cl x)}\big]  = \dim_{\cl\FF_q}\big((\coker\beta')_{\cl x}\big) - \sum_{i=1}^r [(\widehat{\Fcal'_i})_{\cl x}:(\widehat{\Fcal'_{i-1}})_{\cl x}]
    \end{equation}
    Since obviously $\dim_{\cl\FF_q}\big((\coker\beta')_{\cl x}\big)\geq0$, we deduce the following inequality  
    \begin{equation}\label{eq:Newton-bounds-at-x}
    \frac{\deg(N'_x,\varphi_{N',x})}{d}\geq \sum_{i\in I_x}\sum_{j=d-m+1}^d \lambda_{i,j}
    \end{equation}
    from \eqref{eq:Rel-Position} and \eqref{eq:deg-index}. Since the right side of the inequality is an integer and $(N',\varphi_{N'},\iota_{N'})$ is assumed to be a simple $(D,\varphi)$-space, we may add $[m\inv_x(D)]_\QQ$ to the lower bound by Lemma~\ref{lemma-Lau-bound-for-degree-of-D-varphi-space-outside-split-places}, and thus the inequality in (\ref{lem:adm-lattice:lower}) immediately follows. 
    
    To prove Claim~(\ref{lem:adm-lattice:key-ineq}), note first that \eqref{eq:deg-index} and \eqref{eq:index-at-geom-pt} imply the following
    \begin{equation}\label{eq:cokekr-alpha-at-x}
        \sum_{\cl x\mid x} \dim_{\cl\FF_q}\big(\coker(\beta')_{\cl x}\big) = \deg(N'_x,\varphi_{N',x}) + \sum_{i\in I_x}\sum_{j=1}^r [(\widehat{\Fcal'_j})_{\cl x_i}:(\widehat{\Fcal'_{j-1}})_{\cl x_i}].
    \end{equation}
    Therefore, applying the lower bound of $\deg(N'_x,\varphi_{N',x})/d$ in Claim~(\ref{lem:adm-lattice:lower}) and the lower bound for the relative indices in \eqref{eq:Rel-pos-bounds}, we get
    \begin{equation}\label{eq:less-sharp-key-ineq}
        \frac{1}{d}\sum_{\cl x\mid x} \dim_{\cl\FF_q}\big(\coker(\beta')_{\cl x}\big)\geq [m\inv_x(D)]_\QQ - \sum_{i\in I_x}\sum_{j=1}^m (\lambda_{i,j} - \lambda_{i,d+1-j}).
    \end{equation}
    Now, note that we have $\sum_{j=1}^m (\lambda_{i,j} - \lambda_{i,d+1-j})\geq 0$ with equality exactly when $\lambda_i$ is central, so Claim~(\ref{lem:adm-lattice:key-ineq}) follows from the above inequality.  
\end{proof}

\begin{rmk}
    Note that Lemma~\ref{lem:adm-lattice-simple}(\ref{lem:adm-lattice:key-ineq}) is the only non-trivial statement that can be extracted from \eqref{eq:less-sharp-key-ineq}. In fact, the left side of \eqref{eq:less-sharp-key-ineq} is non-negative, while the right side is negative whenever $I_x\ne\emptyset$ and some $\lambda_i$ is non-central. (Note that $[m\inv_x(D)]_\QQ < 1$, while $\sum_{i\in I_x}\sum_{j=1}^m (\lambda_{i,j} - \lambda_{i,d+1-j})\geq1$ if some $\lambda_i$ is non-central).
%
\end{rmk}

\begin{rmk}\label{rmk:stupid-sanity}
    Let us make some remarks on the case when $\dim_{\cl\FF_q}N = d^2$. By Lemma~\ref{lem:adm-lattice}(\ref{lem:adm-lattice:good-redn}), this implies that  $\coker(\beta) = 0$ and $(N_x,\varphi_{N,x})$ is a Dieudonn\'e $D_x$-module free of rank-$1$ over $\breve D_x$ for any $x\in |X|$. On the other hand, summing up \eqref{eq:cokekr-alpha-at-x} over all simple $(D,\varphi)$-subspaces of $(N,\varphi_N,\iota_N)$ we obtain
\[
0=\frac{1}{d}\dim_{\cl\FF_q}\big(\coker(\beta)\big) = \sum_{x\in |X|}\frac{\deg(N_x,\varphi_{N,x})}{d} - \sum_{i\in I}\deg(\lambda_i),
\]
   which is compatible with Mazur's inequality; \emph{cf.} Proposition~\ref{prop:Adm}. Thus, Lemma~\ref{lem:adm-lattice-simple} can loosely be thought of as the \emph{degeneration} of Mazur's inequality.
\end{rmk}

We can now prove the following sufficient condition for properness. 

\begin{thm}
  \label{thm:from-inequality-to-properness}
  Let $I^{\nc}\coloneqq \{i\in I\mid\lambda_i\text{ is not central}\}$, and assume that $|\Ram(D)|>|I^{\nc}|$.
  Suppose that for each $0< m <d$ and for any subset $Y\subset \Ram(D)$ with size $|\Ram(D)|-|I^{\nc}|$, we have 
  \begin{equation}  \label{eq:inequality-for-properness}  
      \sum_{x\in Y} [m \inv_x (D)]_{\QQ}  > \sum_{i\in I}\sum_{j=1}^{d-m} \lambda_{i,j} .
  \end{equation}
Then, the stack $\Sht_{\Dcal^\times,I_\bullet}^{\leqslant\blambda} / a^{\ZZ}$ is proper over $\cl\FF_q$ for any $a\in \AA_F^{\times}$ of positive degree.
\end{thm}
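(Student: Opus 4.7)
The plan is to verify the valuative criterion for properness, together with a quasi-compactness result, over the base $X^I$; since $\Sht^{\leqslant\blambda}_{\Dcal^\times,I_\bullet}$ is already separated and locally of finite type, these properties descend to the quotient $\Sht^{\leqslant\blambda}_{\Dcal^\times,I_\bullet}/a^\ZZ$, so it suffices to verify (a) quasi-compactness (to upgrade to finite type) and (b) the existence part of the valuative criterion.

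The key preliminary observation is that for dominant coweights $(\lambda_i)_{i\in I}$ with $\sum_{i\in I}\deg(\lambda_i)=0$, one has
\[
\sum_{i\in I}\sum_{j=1}^{k}\lambda_{i,j} \geq 0 \quad \text{for every integer } 1\leq k \leq d-1;
\]
this follows from the dominance of each $\lambda_i$, which yields $\sum_{j>k}\lambda_{i,j}\leq \tfrac{d-k}{k}\sum_{j\leq k}\lambda_{i,j}$, combined with $\sum_{i,j}\lambda_{i,j}=0$. Consequently, the hypothesis \eqref{eq:inequality-for-properness} actually implies the strict bound $\sum_{x\in Y}[m\inv_x(D)]_{\QQ}>0$ for the relevant $Y$ and $m$.

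For quasi-compactness, I would adapt the strategy of Proposition~\ref{prop:quasi-compactness-of-Sht-D-mod-a} by proving that every $\Dcal^\times$-shtuka $(\Eucal,\alpha)$ over $\cl\FF_q$ is irreducible. Given a hypothetical non-trivial $\Dcal$-stable subshtuka $(\Fucal,\alpha)$, take a simple $(D,\varphi)$-subspace $N'$ of the generic fibre of $\Fcal_0$ with $\dim_{\breve F}N'=dm$ and $1\leq m<d$ (Corollary~\ref{cor:properties-D-varphi-space}, noting that $N'\subsetneq (\Ecal_0)_\eta$); saturating $N'$ in each $\Ecal_j$ produces $\Dcal$-stable subbundles $\Fcal'_j\subset\Fcal_j\subset\Ecal_j$ with $\beta'\coloneqq\alpha|_{\ltau\Fcal'_0}$ an \emph{isomorphism} onto $\Fcal'_r\coloneqq\alpha(\ltau\Fcal'_0)$ (since $\alpha|_{\ltau\Fcal_0}$ is already an isomorphism and $N'$ is $\varphi$-stable). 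The proof of Lemma~\ref{lem:adm-lattice-simple}(\ref{lem:adm-lattice:key-ineq}) then applies in this setting with $\coker(\beta')=0$, giving $0\geq[m\inv_x(D)]_{\QQ}$ for each $x\in\Ram(D)$ for which every leg over $x$ has central coweight (or $I_x=\emptyset$). Since at most $|I^{\nc}|$ points of $\Ram(D)$ fail this condition, one may choose a subset $Y\subset\Ram(D)$ of size $|\Ram(D)|-|I^{\nc}|$ consisting of such ``good'' places; summing gives $0\geq\sum_{x\in Y}[m\inv_x(D)]_{\QQ}$, contradicting the positivity above. With irreducibility established, Lemma~\ref{lem:bounding-HN-parameter-of-Sht-D} bounds the Harder--Narasimhan gaps of $\Ecal_0$, so the image of each component $\Sht^{\leqslant\blambda,\deg=e}_{\Dcal^\times,I_\bullet}$ in $\Bun_{\GL_{d^2}}^{\deg=e}$ factors through a quasi-compact Harder--Narasimhan truncation, yielding quasi-compactness of $\Sht^{\leqslant\blambda}_{\Dcal^\times,I_\bullet}/a^\ZZ$.

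For the existence part of the valuative criterion, given a $K$-point of $\Sht^{\leqslant\blambda}_{\Dcal^\times,I_\bullet}/a^\ZZ$ for $K=\Frac(A)$ with $A$ a complete DVR, I would lift to a $K$-point of $\Sht^{\leqslant\blambda}_{\Dcal^\times,I_\bullet}$ via the free $a^\ZZ$-action, and apply Construction~\ref{constr:adm-lattice} to obtain the $(D,\varphi)$-space $(N,\varphi_N,\iota_N)$. By Lemma~\ref{lem:adm-lattice}(\ref{lem:adm-lattice:good-redn}), good reduction is equivalent to $\dim_{\breve F}N=d^2$. Supposing otherwise, take a simple $(D,\varphi)$-subspace $N'\subset N$ of dimension $dm$ with $1\leq m<d$, and combine the upper bound of Lemma~\ref{lem:adm-lattice}(\ref{lem:adm-lattice:ineq}) with the lower bound of Lemma~\ref{lem:adm-lattice-simple}(\ref{lem:adm-lattice:key-ineq}) summed over a good subset $Y\subset\Ram(D)$ of size $|\Ram(D)|-|I^{\nc}|$:
\[
d\sum_{x\in Y}[m\inv_x(D)]_{\QQ} \leq \sum_{x\in Y}\sum_{\cl x\mid x}\dim_{\cl\FF_q}\coker(\beta')_{\cl x} \leq \dim_{\cl\FF_q}\coker(\beta') \leq d\sum_{i\in I}\sum_{j=1}^{d-m}\lambda_{i,j},
\]
directly contradicting \eqref{eq:inequality-for-properness}. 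The main obstacle is the quasi-compactness step, since it requires both the preliminary positivity $\sum_{i,j\leq k}\lambda_{i,j}\geq 0$ (which is needed precisely to upgrade the theorem's hypothesis to strict positivity of the invariant sum) and an adaptation of Lemma~\ref{lem:adm-lattice-simple} to actual $\Dcal^\times$-shtukas rather than degenerating families; once these are in place, the valuative criterion step follows by directly sandwiching the cokernel between the two bounds already proven in \S\ref{sec-isosht}.
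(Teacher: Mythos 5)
Your proposal is correct and follows essentially the same route as the paper: the existence part of the valuative criterion is verified exactly as in the paper's proof by applying Construction~\ref{constr:adm-lattice} and sandwiching $\dim_{\cl\FF_q}\coker(\beta')$ between the lower bound of Lemma~\ref{lem:adm-lattice-simple}(\ref{lem:adm-lattice:key-ineq}), summed over a subset of $\Ram(D)$ of size $|\Ram(D)|-|I^{\nc}|$ avoiding places with non-central legs, and the upper bound of Lemma~\ref{lem:adm-lattice}(\ref{lem:adm-lattice:ineq}), while quasi-compactness is obtained from irreducibility of $\cl\FF_q$-points together with the Harder--Narasimhan truncation argument of Proposition~\ref{prop:quasi-compactness-of-Sht-D-mod-a}. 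The only (harmless) divergence is the irreducibility step: you argue directly on an $\cl\FF_q$-point by saturating a simple $(D,\varphi)$-subspace inside a hypothetical $\Dcal$-stable subshtuka, so that $\coker(\beta')=0$, and then use the elementary positivity $\sum_{i\in I}\sum_{j=1}^{k}\lambda_{i,j}\geq 0$ (valid since $\sum_{i}\deg(\lambda_i)=0$ in the only nonempty case), whereas the paper deduces the same conclusion by applying its key claim about degenerating families to constant families pulled back from $\cl\FF_q$; both variants reduce to the identical computation from \S\ref{sec-isosht}.
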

In the proof of the theorem, we directly show the quasi-compactness of $\Sht_{\Dcal^\times,I_\bullet}^{\leqslant\blambda} / a^{\ZZ}$ without using Proposition~\ref{prop:quasi-compactness-of-Sht-D-mod-a}(\ref{prop:quasi-compactness-of-Sht-D-mod-a:no-restriction}). In fact, it is unclear if the assumption in the theorem implies the assumption for Proposition~\ref{prop:quasi-compactness-of-Sht-D-mod-a}(\ref{prop:quasi-compactness-of-Sht-D-mod-a:no-restriction}) unless $D_x$ is a division algebra for any $x\in \Ram(D)$.

\begin{proof}
  Recall that $\Sht_{\Dcal^\times,I_\bullet}^{\leqslant\blambda} / a^{\ZZ}$ is separated and locally of finite type over $\FF_q$ \cite[Theorem~3.15]{ArastehRad-Hartl:Uniformizing}, so we have the uniqueness part of the valuative criterion. Therefore, we need to show that the inequality~\eqref{eq:inequality-for-properness} implies quasi-compactness and the existence part of the valuative criterion \cite[\href{https://stacks.math.columbia.edu/tag/0CQM}{Tag~0CQM}]{AlgStackProj} for $\Sht_{\Dcal^\times,I_\bullet}^{\leqslant\blambda} / a^{\ZZ}$. Note that if quasi-compactness is already known, then to verify properness it suffices to check the valuative criterion for discrete valuation rings that are essentially of finite type over $\cl\FF_q$, using \cite[Lemma~A.11]{AlperHalpenLeistnerHeinloth:ModuliForAlgebraicStacks}. In particular, we may restrict the test rings for the valuative criterion to complete discrete valuation rings over $\cl\FF_q$ with residue field $\cl\FF_q$.

  Let $A$ be a complete discrete valuation ring over $\cl\FF_q$ with residue field $\cl\FF_q$, and set $K\coloneqq\Frac(A)$. Choose a $\Dcal^{\times}$-shtuka
  \[
    ( \Eucal , \alpha ) = ( (x_i)_{i\in I}, (\mathcal E_j)_{j}, (\varphi_j)_{j} , \alpha )\in \Sht^{\leqslant\blambda}_{\Dcal^\times, I_\bullet} ( K ),
  \]
  and construct a $(D,\varphi)$-space $(N,\varphi_N,\iota_N)$ with respect to some finite extension $L/K$ as in \eqref{eq:D-varphi-space-from-degeneration}. 
  Under the assumption of the theorem, we claim that
  \begin{equation}\label{claim:inequality-for-properness}
      \textit{$(N,\varphi_N,\iota_N)$ is a simple $(D,\varphi)$-space with }\dim_{\breve F}N = d^2.
  \end{equation}
    To show the claim, let $(N',\varphi_{N'},\iota_{N'})$ be a simple $(D,\varphi)$-subspace and set $ \dim_{\breve F} N' = dm$. Let $x\in \Ram(D)$ and set $I_x \coloneqq \{ i \in I \mid \cl x_i \text{ lies over } x \}$, which is allowed to be empty. Let $Y'\subset \Ram(D)$ be the set of places $x$ satisfying $I_x\cap I^\nc = \emptyset$. Then by Lemma~\ref{lem:adm-lattice-simple}(\ref{lem:adm-lattice:key-ineq}) we have
  \begin{equation}\label{eq:sharpened-key-ineq}
      \frac{1}{d}\sum_{\cl x|x} \dim_{\cl\FF_q}\big(\coker(\beta')_{\cl x}\big) \geq 
      \left\{
      \begin{array}{cl}
          [m\inv_x(D)]_\QQ & \text{if }x\in Y' ; \\
          0 & \text{otherwise.}
      \end{array}
      \right. 
  \end{equation}
  Summing over all closed points $x$, we obtain
  \begin{equation}\label{eq:key-ineq-for-coker-alpha}
      \sum_{x\in Y'}[m\inv_x(D)]_\QQ \leq \frac{1}{d}\dim_{\cl\FF_q}\big(\coker(\beta')\big) \leq \sum_{i\in I}\sum_{j=1}^{d-m} \lambda_{i,j},
  \end{equation}
  where the upper bound is from Lemma~\ref{lem:adm-lattice}(\ref{lem:adm-lattice:ineq}). But since we have $|\Ram(D)\setminus Y'|\leq |I^\nc|$, the outermost inequality in \eqref{eq:key-ineq-for-coker-alpha} contradicts \eqref{eq:inequality-for-properness} if $m<d$. This forces $m=d$, and hence we have $N'=N$. This proves Claim~\eqref{claim:inequality-for-properness}.

  Let us now deduce the theorem. Suppose that $(\Eucal,\alpha)$ is the pullback of $(\Eucal_{\cl\FF_q},\alpha_{\cl\FF_q})\in \Sht^{\leqslant\blambda}_{\Dcal^\times, I_\bullet} (\cl\FF_q)$. Then $(\Eucal_{\cl\FF_q},\alpha_{\cl\FF_q})$ is \emph{irreducible} in the sense of Definition~\ref{def:D-subshtukas}, since its associated $(D,\varphi)$-space $(N,\varphi_N,\iota_N)$ is simple by \eqref{claim:inequality-for-properness}. By repeating the proof of Proposition~\ref{prop:quasi-compactness-of-Sht-D-mod-a} using Lemma~\ref{lem:irreducible-shtuka}, it follows that the image of $\Sht^{\leqslant\blambda}_{\Dcal^\times,I_\bullet}(\cl\FF_q)$ in $\Bun_{\GL_{d^2}}/a^\ZZ$ is contained in some quasi-compact open substack. This shows that $\Sht_{\Dcal^\times,I_\bullet}^{\leqslant\blambda} / a^{\ZZ}$ is of finite type over $\FF_q$, as it is already locally of finite type over $\FF_q$.

  For an arbitrary $( \Eucal , \alpha )\in \Sht^{\leqslant\blambda}_{\Dcal^\times, I_\bullet} ( K )$, Claim~\eqref{claim:inequality-for-properness} together with Lemma \ref{lem:adm-lattice}(\ref{lem:adm-lattice:good-redn}) implies that the valuative criterion for properness is satisfied for $\Sht_{\Dcal^\times,I_\bullet}^{\leqslant\blambda} / a^{\ZZ}$ with respect to complete discrete valuation rings over $\cl\FF_q$ with residue field $\cl\FF_q$. This now implies the desired properness as explained in the first paragraph.
\end{proof}

\begin{rmk}
    The proof of Theorem~\ref{thm:from-inequality-to-properness} can be adapted to recover \cite[Proposition~3.2]{Lau:Degeneration} as follows. If we choose $(\Eucal,\alpha)\in\Sht^{\leqslant\blambda}_{\Dcal^\times,I_\bullet}(K)$ so that each of its $i$th legs $x_i$ lies in $U(A)$ (with $U=X\setminus\Ram(D)$), then we can strengthen \eqref{eq:key-ineq-for-coker-alpha} as follows
    \begin{equation}
        \sum_{x\in \Ram(D)}[m\inv_x(D)]_\QQ \leq \frac{1}{d}\dim_{\cl\FF_q}\big(\coker(\beta)\big) \leq \sum_{i\in I}\sum_{j=1}^{d-m} \lambda_{i,j},
    \end{equation}
    and the outermost inequality contradicts the sufficient condition for properness in  \cite[Proposition~3.2]{Lau:Degeneration}, which is $ \sum_{x\in \Ram(D)}[m\inv_x(D)]_\QQ>\sum_{i\in I}\sum_{j=1}^{d-m} \lambda_{i,j}$.
\end{rmk}
%

\begin{exa}\label{exa:inequality-special-cases}
Let $D$ be a \emph{quaternion division algebra} over $F$ (so $d=2$), and fix a hereditary order $\Dcal$ of $D$ over $X$. For simplicity, assume that $I=I^{\nc}$. Then, by Theorem~\ref{thm:from-inequality-to-properness},  $\Sht_{\Dcal^{\times} ,I_{\bullet}}^{\leqslant\blambda}$ is proper over $\cl\FF_q$ provided that we have 
\[
      \frac{1}{2} (|\Ram(D)| - |I|) > \sum_{i\in I}\sum_{j=1}^{d-m} \lambda_{i,j} .
\]
In fact, all of the inequalities \eqref{eq:inequality-for-properness} reduce to the one stated above, since any subset $Y\subseteq \Ram(D)$ with size $|\Ram(D)| - |I|$ yields this inequality. 
Note that by \cite[Theorem~A]{Lau:Degeneration} the leg morphism $\wp\colon\big(\Sht_{\Dcal^\times ,I_{\bullet}}^{\leqslant \blambda} / a^{\ZZ}\big)|_{\breve U^I} \to \breve U^I$ is proper if and only if we have 
\[
\frac{1}{2} |\Ram(D)| > \sum_{i\in I}\sum_{j=1}^{d-m} \lambda_{i,j} .
\]
As a special case, if $|I|=2$ and $\blambda = ( (1,0), (0,-1))$, then the stack $\Sht_{\Dcal^{\times} ,I_{\bullet}}^{\leqslant \blambda} / a^{\ZZ}$ is proper over $\cl{\FF}_q$ provided that we have
    \[
      |\Ram(D)| > 4,
    \]
while $\wp\colon\big(\Sht_{\mathcal D^{\times} ,I_{\bullet}}^{\leqslant \blambda} / a^{\ZZ}\big)|_{\breve U^I}\to\breve U^I$ is proper if and only if $|\Ram(D)|>2$.
\end{exa}


%


\bibliographystyle{amsalpha}
\bibliography{bib}
\end{document}